\newtheorem{theorem}{Theorem}[section]
\newtheorem{proposition}[theorem]{Proposition}
\newtheorem{lemma}[theorem]{Lemma}
\newtheorem{corollary}[theorem]{Corollary}
\theoremstyle{definition}
\newtheorem{definition}{Definition}}
\newtheorem{example}[theorem]{Example}
\newtheorem{rem}[theorem]{Remark}
\def\cC{\mathcal C}
\def\cD{\mathcal D}
\def\cE{\mathcal E}
\def\cF{\mathcal F}
\def\cG{\mathcal G}
\def\cH{\mathcal H}
\def\cR{\mathcal R}
\def\cX{\mathcal X}
\def\cY{\mathcal Y}
\def\cZ{\mathcal Z}
\def\Aut{\mbox{\rm Aut}}
\def\Aut{\mbox{\rm Aut}}
\def\Ker{\mbox{\rm Ker}}
\def\Alt{\mbox{\rm Alt}}
\newcommand{\PSL}{\mbox{\rm PSL}}
\newcommand{\SL}{\mbox{\rm SL}}
\newcommand{\PGL}{\mbox{\rm PGL}}
\newcommand{\PSU}{\mbox{\rm PSU}}
\newcommand{\PGU}{\mbox{\rm PGU}}
\newcommand{\aut}{\mbox{\rm Aut}}
\newcommand{\gal}{\mbox{\rm Gal}}
\newcommand{\ha}{{\textstyle\frac{1}{2}}}
\begin{document}
\title{Algebraic curves with automorphism groups of large prime order}
\thanks{This research was performed within the activities of  GNSAGA - Gruppo Nazionale per le Strutture Algebriche, Geometriche e le loro Applicazioni of Italian INdAM. 
 The second author  was supported by FAPESP-Brazil, grant 2017/18776-6.}
 \thanks{Nazar Arakelian is with the Centro de Matem\'atica, Computa\c{c}\~ao e Cogni\c{c}\~ao, Universidade Federal do ABC,   Santo Andr\'e, SP  09210-580, Brazil. \\ E-mail: n.arakelian@ufabc.edu.br}
 \thanks{Pietro Speziali is with the Instituto de Ci\^encias Matem\'aticas e de Computa\c{c}\~ao, Universidade de S\~ao Paulo, S\~ao Carlos, SP 13560-970, Brazil.\\ E-mail: pietro.speziali@icmc.usp.br}
 
 \thanks{{\bf Keywords}: Algebraic curves; Automorphism groups}
 
 \thanks{{\bf Mathematics Subject Classification (2010)}: 114H37, 14H05}
 
 \author{Nazar Arakelian}
 
 \author{Pietro Speziali}
 
\begin{abstract}
Let $\cX$ be an algebraic curve of genus $g$ defined over an algebraically closed field $K$ of characteristic $p \geq 0$, and $q$ a prime dividing $|\aut(\cX)|$. We say that $\cX$ is a $q$-curve.  Homma proved that either $q \leq g+1$ or $q = 2g+1$, and classified $(2g+1)$-curves. In this note, we classify $(g+1)$-curves, and fully characterize the automorphism groups of $q$-curves for $q= 2g+1, g+1$. We also give some partial results on $q$-curves for $q = g, g-1$. 
\end{abstract}

\maketitle

\section{Introduction}\label{intr}

Let $\cX$ be a (projective, algebraic, non-singular, absolutely irreducible) curve of genus $g$ defined over an algebraically closed field of characteristic $p \geq 0$.  Let $q$ be a prime dividing the order of the automorphism group $\aut(\cX)$ of $\cX$. Homma \cite[Theorem 1]{homma1980} proved that either $q \leq g+1$ or $q = 2g+1$. In this paper, we study curves whose automorphism group is divided by a prime $q$ that is \emph{large} compared to $g$; more specifically, we are interested in the cases when $q \geq g-1$. A motivation (and at, the same time, a nice application of) for our results is the problem of determining, for any fixed genus $g$, the possibilities of the automorphism groups for curves of genus $g$, as well as, their respective models  up to birational equivalence, see for instance \cite{maagard, malmendier}. 

From now on, to avoid long periphrases and repetitions, we introduce the following (non-standard) terminology. 

\begin{definition}
Let $q$ be a prime number. A curve $\cX$ defined over an algebraically closed field $K$ of characteristic $p \geq 0$ is a $q$-curve if $\aut(\cX)$ contains a subgroup $C_q$ of order $q$. A $q$-curve is \emph{tame} if either $p = 0$ or $q \neq p$, \emph{wild} otherwise. 
\end{definition}

Here, two problems naturally arise. First, the problem of classifying $q$-curves for any fixed $q$ (possibly, as we will do, for $q = f(g)$, a function on the genus of $\cX$); second, once such curves are classified, to determine their full automorphism group. Homma  \cite[Theorem 2]{homma1980} classified all $(2g+1)$-curves (up to birational equivalence); later Seyama \cite[Theorem 3.3]{seyama} computed their automorphism group when $p=0$. 

The first problem we address (and to which Section \ref{2g+1} is devoted) is then the determination of the automorphism group of tame $(2g+1)$-curves in any characteristic (as wild $(2g+1)$-curves are always hyperelliptic). Let $\cX$ be a $(2g+1)$-curve; the most difficult part is to understand what happens  when $\aut(\cX)$ is non-tame. If this is the case, we show that $\aut(\cX)$ is a finite simple group. By combining Henn's classification of curves with more than $8g^3$ automorphisms and a result by  Vdovin \cite{vdovin} bounding the size of abelian subgroups of finite simple groups, we prove that  a cyclic group of order $2g+1$ is normal in $\aut(\cX)$ unless $\cX$ is birationally equivalent to  a Hermitian curve. 

Then, we turn our attention to $(g+1)$-curves. In Section \ref{g+1}, we first provide the classification of tame and wild $(g+1)$-curves. Then, we characterize hyperelliptic $(g+1)$-curves and finally, we provide the full automorphism group of tame, non-hyperelliptic $(g+1)$-curves whenever $p \neq 2,3$, as well as, the full automorphism group of wild $(g+1)$-curves. In the tame case, as $(g+1)$-curves have even genus, we apply the deep results obtained by Giulietti and Korchm\'aros in \cite{giulietti-korchmaros-2017}. We prove that the automorphism group of a $(g+1)$-curve $\cX$ has to be \emph{small}, in the sense that the Hurwitz bound holds and that a group of prime order equal to $(g+1)$ must always be normal in $\aut(\cX)$, with only one exception in genus $4$. 

Finally, in Section 5 we give some partial results on the classification of $q$-curves for $q =g, g-1$.  Such cases seem rather difficult and deserve a separate investigation.

\section{Background and preliminary results}\label{background}
Our notation and terminology are standard. For an exhaustive treatise of the theory of curves and algebraic function fields, the reader is referred to \cite{hirschfeld-korchmaros-torres2008} and \cite{stbook}. Let $\cX$ be a curve defined over an algebraically closed field $K$ of characteristic $p \geq 0$.  We denote by $K(\cX)$ the function field of $\cX$. By a point $P \in \cX$ we mean a point in a non-singular model of $\cX$; in this way, we have a one-to-one correspondence between points of $\cX$ and places of $K(\cX)$. 

Let $\aut(\cX)$ denote the full automorphism group of $\cX$. For a subgroup $G$ of $\aut(\cX)$, we denote by $K(\cX)^G$ the fixed field of $G$. A non-singular model $\bar{\cX}$ of  $K(\cX)^G$ is referred to as the quotient curve of $\cX$ by $G$ and denoted by $\cX/G$. The field extension $K(\cX):K(\cX)^G$ is Galois with Galois group $G$. For a point $P \in \cX$, $G(P)$ is the orbit of $P$ under the action of $G$ on $\cX$ seen as a point-set. The orbit $G(P)$ is said to be long if $|G(P)| = |G|$, short otherwise. There is a one-to-one correspondence between short orbits and ramified points in the extension $K(\cX):K(\cX)^G$. $G$ might have no short orbits; if this is the case, the cover $\cX \rightarrow \cX/G$ (or equivalently, the extension $K(\cX):K(\cX)^G$) is unramified. 

For $P \in \cX$, the subgroup $G_P$ of $G$ consisting of all elements of $G$ fixing $P$ is called the stabilizer of $P$ in $G$.  We will often refer to $G_P$ as to the \emph{1-point stabilizer} (or, sometimes, the \emph{one-point stabilizer}) of $G$. For a non-negative integer $i$, the $i$-th ramification group of $\cX$ at $P$ is denoted by $G_P^{(i)}$, and defined by
$$
G_P^{(i)}=\{\sigma \ | \ v_P(\sigma(t)-t)\geq i+1, \sigma \in G_P\}, 
$$
 where $t$ is a local parameter at $P$ and $v_P$ is the respective discrete valuation. Here $G_P=G_P^{(0)}$. Furthermore, $G_P^{(1)}$ is the unique Sylow $p$-subgroup of $G_P^{(0)}$, and the factor group $G_P^{(0)}/G_P^{(1)}$ is cyclic of order prime to $p$; see \cite[Theorem 11.74]{hirschfeld-korchmaros-torres2008}. In particular, if $|G_P|$ is a power of  $p$, then $G_P=G_P^{(0)}=G_P^{(1)}$. For a point $P \in \cX$, the ramification index of $P$ is defined as   $e_P := |G^{(0)}_P|$ and the different exponent of $P$ is $d_P := \sum_{i = 0}^{\infty}(|G_P^{(i)}|- 1)$. 
 
 Let $g$ and $\bar{g}$ be the genus of $\cX$ and $\bar{\cX}=\cX/G$, respectively. The Riemann-Hurwitz genus formula is 
\begin{equation}\label{rhg}
2g-2=|G|(2\bar{g}-2)+\sum_{P \in \cX}d_P
\end{equation}
see \cite[Theorem 11.72]{hirschfeld-korchmaros-torres2008}.
 If $\ell_1,\ldots,\ell_k$  are the sizes of the short orbits of $G$, then (\ref{rhg}) yields
\begin{equation}\label{rhso}
2g-2 \geq |G|(2\bar{g}-2)+\sum_{\nu=1}^{k} \big(|G|-\ell_\nu\big),
\end{equation}
and equality holds if $\gcd(|G_P|,p)=1$ for all $P \in \cX$; see \cite[Theorem 11.57 and Remark 11.61]{hirschfeld-korchmaros-torres2008}. 

We now state some further facts, as well as, give  a few definitions and notation that we are going to need to prove our results.

  \begin{theorem}\label{th:largeaut}\cite[Theorem 11.56]{hirschfeld-korchmaros-torres2008} Let $\cX$ be an irreducible curve of genus $g \geq 2$.
If $G$ is a $K$-automorphism group of $\cX$, then Hurwitz's upper bound $|G| \leq 84(g - 1)$
holds in general,  with exceptions occurring only in positive characteristic. Such exceptions can only occur when the quotient curve  $\cX/G$ is rational, and $G$ has at most three short orbits, as follows:
  \begin{enumerate}
  \item[{\rm(a)}]exactly three short orbits, two tame and one non-tame, with $p \geq 3$;
  \item[{\rm(b)}]exactly two short orbits, both non-tame;
 \item[{\rm(c)}]only one short orbit which is non-tame;
\item [{\rm(d)}]exactly two short orbits, one tame and one non-tame.
\end{enumerate}
  \end{theorem}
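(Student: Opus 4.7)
The plan is to apply the Riemann--Hurwitz formula \eqref{rhg} to the cover $\cX \to \cX/G$ and split into cases according to the genus $\bar g$ of $\cX/G$ and the ramification data. First I would treat the tame situation: if $\gcd(|G_P|,p) = 1$ for every $P$, then $d_P = e_P - 1$, and \eqref{rhg} becomes the familiar
\[
\frac{2g-2}{|G|} = 2\bar g - 2 + \sum_{\nu=1}^{k}\Bigl(1 - \frac{1}{e_\nu}\Bigr),
\]
where $e_\nu$ is the ramification index on the $\nu$-th short orbit. A standard case analysis on $(\bar g, k, e_1, \ldots, e_k)$ shows that the right-hand side is either $\leq 0$ (forcing $g \leq 1$) or bounded below by $\tfrac{1}{42}$, from which the Hurwitz bound $|G| \leq 84(g-1)$ follows. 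Hence any counterexample must involve at least one non-tame short orbit, so in particular $p > 0$.

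Next I would bound the different exponent at a wildly ramified point. Using the filtration $G_P = G_P^{(0)} \supseteq G_P^{(1)} \supseteq \cdots$, the fact that $G_P^{(1)}$ is the unique Sylow $p$-subgroup of $G_P$, and that $G_P^{(0)}/G_P^{(1)}$ is cyclic of order prime to $p$, one obtains an estimate of the form $d_P \leq \bigl(|G_P^{(0)}| - 1\bigr) + \bigl(|G_P^{(0)}| - |G_P^{(1)}|\bigr)/(|G_P^{(0)}/G_P^{(1)}| - 1)$ together with the crude bound $d_P \leq |G_P^{(0)}|(|G_P^{(1)}| + \text{higher terms})$. Plugging these into \eqref{rhg} and comparing with $|G| > 84(g-1)$ forces the ramification locus to be very small.

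Then the combinatorial reduction. If $\bar g \geq 1$, \eqref{rhg} alone implies $|G| \leq 4(g-1)$, so $\bar g = 0$. If $G$ has no short orbit, \eqref{rhg} gives $2g - 2 = -2|G|$, a contradiction. If there are four or more short orbits, the contribution from \eqref{rhso} again yields $|G| \leq 84(g-1)$, even allowing wild ramification thanks to the bounds of the previous paragraph. Thus $1 \leq k \leq 3$, and at least one short orbit is non-tame. A careful case-by-case check on $k$ and on which orbits are tame vs non-tame, together with the observation that in characteristic $2$ there is no tame orbit with even ramification index equal to $2$ coexisting with only one further non-tame orbit unless $p \geq 3$, eliminates all configurations except those in (a)--(d).

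The main obstacle is the wild case with a single short orbit. There the only usable data is $d_P$ for a single $P$, and one must exploit the full structure of the higher ramification groups (and the fact that the jumps are constrained by Hasse--Arf when $G_P^{(1)}$ is abelian) to show that the exceptions really do collapse onto case (c). Once the reduction to $\bar g = 0$ and $k \leq 3$ is in place, distinguishing the four sub-cases is bookkeeping, but the wild one-orbit analysis is the step that genuinely uses deep information about ramification beyond the Riemann--Hurwitz formula.
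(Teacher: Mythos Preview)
The paper does not supply a proof of this statement: it is quoted verbatim from \cite[Theorem 11.56]{hirschfeld-korchmaros-torres2008} as a background result in Section~\ref{background}, so there is no ``paper's own proof'' to compare against.

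As for your sketch itself, the overall architecture (apply Riemann--Hurwitz, reduce to $\bar g = 0$, bound the number of short orbits, then enumerate the survivors) is the standard one and is correct in outline, but two points are off. First, the inequalities on $d_P$ in your second paragraph point the wrong way. To force $\bar g = 0$ and $k \le 3$ under the hypothesis $|G| > 84(g-1)$ you need \emph{lower} bounds on the different contribution of each short orbit, not upper bounds: the relevant fact is simply $d_P \ge (|G_P^{(0)}|-1)+(|G_P^{(1)}|-1)$ at a wildly ramified point, so a non-tame orbit contributes at least $|G|(1-1/e)$ plus a strictly positive extra term to $\sum_P d_P$. Upper bounds on $d_P$ (even if the formulas you wrote were well-formed, which they are not) give no control in this direction.

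Second, your final paragraph misreads what has to be shown. The theorem only asserts that \emph{if} Hurwitz fails then the orbit configuration is one of (a)--(d); it does not claim anything finer about the single non-tame orbit in case~(c). Once you know $\bar g=0$ and $k=1$, the tame possibility is excluded immediately by $2g-2 = |G|(-1-1/e) < 0$, so the unique short orbit is non-tame and you are already in case~(c). No Hasse--Arf, no detailed analysis of higher ramification jumps: the ``deep information'' you anticipate is not needed anywhere in this argument. The genuine work lies in the bookkeeping for $k=3$ (in particular, checking that two tame plus one non-tame orbit in characteristic $2$ still satisfies Hurwitz because tame indices are then odd, hence $\ge 3$), not in the one-orbit case.
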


\begin{theorem}\label{Roq}(Roquette, \cite{Roq}) Let $\mathcal{X}$  be an  irreducible
curve of genus $g\geq 2$  defined over  a field of characteristic $p>g+1$.
Then  $|\Aut (\mathcal{X})|\leq 84(g-1)$ holds,  except for  the hyperelliptic 
 curve defined by the affine equation $Y^p-Y-X^2= 0$, with $g =\frac{1}{2}(p + 1)$ and $|\Aut (\mathcal{X})| = 2p(p^2-1)$.
\end{theorem}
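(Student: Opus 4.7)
The plan is to assume $|\Aut(\cX)| > 84(g-1)$ and show this forces $\cX$ to be the exceptional curve $Y^p - Y - X^2 = 0$. By Theorem \ref{th:largeaut}, this hypothesis puts us in positive characteristic $p$, with $\cX/G$ rational (where $G = \Aut(\cX)$) and short-orbit structure of one of the four types (a)--(d). The core of the argument is to exploit the condition $p > g+1$ to pin down the $p$-part of $G$ and then to eliminate all but one numerical possibility in the Riemann--Hurwitz formula.

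First I would bound the Sylow $p$-subgroups of $G$. A cyclic group $C_p\leq \Aut(\cX)$ fixes at most $2 + 2g/(p-1)$ points by Riemann--Hurwitz applied to $C_p$, and since $p-1 > g$ the number of fixed points is at most $2$. Combined with the standard wild-ramification bound $|G_P^{(1)}| \leq 4p(g-1)/(p-2)$ (Nakajima) and the hypothesis $p > g+1$, this forces every Sylow $p$-subgroup, and in particular $G_P^{(1)}$, to have order exactly $p$. By Theorem 11.74, every non-tame one-point stabilizer is therefore metacyclic of the form $C_p \rtimes C_m$ with $\gcd(m,p)=1$.

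Next I would apply (\ref{rhg}) with $\bar g = 0$ in each case of Theorem \ref{th:largeaut}. In cases (a), (b), (d) a routine bookkeeping of short-orbit contributions, using the restriction that the only wild part of each different exponent comes from a cyclic $p$-group of order $p$, quickly returns $|G|\leq 84(g-1)$ or produces a numerical impossibility. Case (c) is the critical one: with a unique short orbit $G(P)$, all non-tame, Riemann--Hurwitz reduces to
\begin{equation*}
2g-2 = -2|G| + \frac{|G|}{|G_P|}\bigl(|G_P|-1 + (p-1)\bigr),
\end{equation*}
where $|G_P| = pm$ and $\gcd(m,p)=1$. Solving for $g$, combined with Vdovin-type restrictions on the tame complement $C_m$ and the bound $p > g+1$, one finds the only admissible solution is $m = 2(p-1)$, $|G_P| = 2p(p-1)$, orbit length $p+1$, whence $g = (p+1)/2$ and $|G| = 2p(p^2-1)$.

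Finally I would produce the explicit model. The ramification data—a single wild short orbit of size $p+1$, stabilizer $C_p \rtimes C_{2(p-1)}$, rational quotient—determines the Galois cover up to birational equivalence as an Artin--Schreier extension $Y^p - Y = X^2$ of $K(X)$; the automorphism group is then verified directly by exhibiting the translations $Y \mapsto Y+c$ ($c\in\F_p$), the involution $X\mapsto -X$, and a $\PSL(2,p)$-type action on the $X$-line, giving precisely $2p(p^2-1)$ elements. The hard part, as I see it, is the case-by-case enumeration in the Riemann--Hurwitz step: one must simultaneously bound the wild contribution $d_P$ from above (via $|G_P^{(1)}| = p$ and $G_P^{(i)}=1$ for $i\geq 2$, which itself needs justification under $p>g+1$) and from below (via the existence of the ramification filtration), then rule out spurious integer solutions of the resulting Diophantine equation.
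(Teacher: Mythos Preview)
The paper does not prove Theorem~\ref{Roq}; it is quoted as a background result and attributed to Roquette~\cite{Roq}. There is therefore no proof in the paper against which to compare your attempt.

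On the substance of your sketch: the overall strategy of invoking Theorem~\ref{th:largeaut} and running a case analysis on the short-orbit structure is reasonable, but there is a concrete error in your wild-ramification bookkeeping. You assume (and yourself flag as needing justification) that $G_P^{(i)}=1$ for $i\geq 2$, and your displayed Riemann--Hurwitz identity in case~(c) is built on exactly this. The assumption is false precisely for the exceptional curve: for the Artin--Schreier extension $Y^p-Y=X^2$ the different exponent at the unique point ramified under $C_p$ equals $(p-1)(2+1)=3(p-1)$, so in the filtration for $C_p$ one has $G_P^{(0)}=G_P^{(1)}=G_P^{(2)}=C_p$ and only $G_P^{(3)}=1$. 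Consequently your equation
\[
2g-2=-2|G|+\frac{|G|}{|G_P|}\bigl(|G_P|-1+(p-1)\bigr)
\]
does not hold for the Roquette curve, and the ``only admissible solution is $m=2(p-1)$'' step does not go through as written. A correct argument must allow a non-trivial $G_P^{(2)}$, which introduces an extra free parameter (the last jump in the filtration) into the Diophantine analysis and makes the elimination of spurious cases substantially harder than your outline suggests. Your reduction to $|G_P^{(1)}|=p$ is plausible under $p>g+1$, but the higher-ramification claim is not merely unjustified---it is wrong in the one case that matters.
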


The following result will be crucial in our paper. 

\begin{theorem}(Homma, \cite{homma1980})\label{homma1}
If a prime number $q$ is an order of an automorphism on an algebraic curve $\cX$ of genus $g \geq 2$, then either $q \leq g+1$, or $q=2g-1$ and $g=2$, or $q = 2g+1$. 
\end{theorem}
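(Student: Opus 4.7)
The plan is to consider a cyclic subgroup $G = \langle \sigma \rangle$ of prime order $q$ in $\aut(\cX)$ and to exploit the Riemann--Hurwitz formula applied to the Galois cover $\cX \to \cX/G$. Since $|G| = q$ is prime, every non-trivial stabilizer equals $G$, so the ramified places of the cover coincide with the fixed points of $\sigma$, and each has ramification index $q$. Writing $\bar{g}$ for the genus of $\cX/G$, the formula reads
\[
2g - 2 = q(2\bar{g} - 2) + \sum_{P \in \mathrm{Fix}(\sigma)} d_P,
\]
with $d_P = q - 1$ in the tame case $q \neq p$, and $d_P = (m_P + 1)(q - 1)$ for an integer $m_P \geq 1$ in the wild case $q = p$.

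I would then split on the value of $\bar{g}$. If $\bar{g} \geq 2$, the right-hand side is at least $2q$ regardless of ramification, so $q \leq g - 1$. If $\bar{g} = 0$, the tame formula rearranges to $k = 2 + \tfrac{2g}{q-1}$ for the fixed-point count $k$; since $g \geq 2$ one has $k \geq 3$, and the cases $k = 3$, $k = 4$, $k \geq 5$ yield respectively $q = 2g+1$, $q = g+1$, and $q \leq \tfrac{2g}{3} + 1 \leq g + 1$. The wild analogue produces the identical identity $\sum_P (m_P + 1) = 2 + \tfrac{2g}{q-1}$, and since each summand is at least $2$ the same trichotomy survives: a single fixed point with $m_P = 2$ reproduces $q = 2g + 1$, while additional or more intense ramification drives $q$ down to at most $g + 1$.

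The delicate case is $\bar{g} = 1$, where Riemann--Hurwitz becomes $\sum_P d_P = 2g - 2$. The unramified situation forces $g = 1$, which is excluded; when there are at least two ramified places (tame), or any ramified place at all in the wild case, one directly reads off $q \leq g$. What remains is the tame subcase with exactly one fixed point, which otherwise would permit $q = 2g - 1$; ruling this out is the main obstacle. My approach is to use Kummer theory: since $q \neq p$ and $K$ is algebraically closed, the cover has the form $y^q = u$ for some $u \in K(\cX/G)^*$, which may be normalized so that $0 \leq v_Q(u) < q$ at every place $Q$, with a place ramified precisely when $v_Q(u) > 0$. A single ramified place $Q_0$ would then contradict $\deg \mathrm{div}(u) = 0$, because the contribution at every other place is zero modulo $q$ and would force $v_{Q_0}(u) \equiv 0 \pmod q$, hence $v_{Q_0}(u) = 0$. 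Therefore at least two fixed points exist, and this subcase also yields $q \leq g$.

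Assembling the cases gives the unified conclusion $q \leq g + 1$ or $q = 2g + 1$; the remaining possibility $q = 2g - 1$ listed for $g = 2$ in the statement coincides with $q = g + 1 = 3$ and is already absorbed into the first inequality.
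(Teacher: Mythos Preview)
The paper does not supply its own proof of this theorem; it is quoted from Homma's paper and used as a black box. Your argument is therefore not being compared against anything in the present paper, but it is essentially Homma's original proof: a Riemann--Hurwitz case analysis on the genus $\bar g$ of $\cX/G$, with the single delicate point being the exclusion of a tame cyclic cover of an elliptic curve ramified at exactly one place.

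Your proof is correct. One small refinement: the normalization step ``$0 \le v_Q(u) < q$ at every place'' is not needed and, over a genus-one base, would require an extra argument (you would have to show that the divisor $\sum \lfloor v_Q(u)/q\rfloor\, Q$ is principal, which is not automatic). The cleaner version is to use only the standard Kummer ramification criterion: for $y^q = u$ with $q\neq p$, a place $Q$ ramifies if and only if $q \nmid v_Q(u)$. If exactly one place $Q_0$ ramified, then $v_Q(u)\equiv 0\pmod q$ for all $Q\neq Q_0$, and since $\sum_Q v_Q(u)=\deg\,\mathrm{div}(u)=0$ we get $v_{Q_0}(u)\equiv 0\pmod q$ as well, a contradiction. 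This is exactly the idea you wrote, just without the normalization detour.

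Your closing remark is also right: the clause ``$q=2g-1$ and $g=2$'' in the statement is redundant, since for $g=2$ one has $2g-1=3=g+1$, so that case is already contained in $q\le g+1$.
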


We will also need the following results. 

\begin{theorem}\cite[Theorem 11.127]{hirschfeld-korchmaros-torres2008} \label{henn}
Let $\cX$ be a curve of genus $g \geq 2$. If $|\aut(\cX)| \geq 8g^3$, then $\cX$ is birationally equivalent to one of the following:
\begin{enumerate}
\item[{\rm (I)}] the hyperelliptic curve of affine equation $Y^2+Y+X^{2k+1} = 0$ with $p=2, g=2k-1$; also, $\aut(\cX)$  fixes a point $P$ and $|\aut(\cX)| = 2^{2k+1}(2^k + 1)$;
\item[{\rm (II)}] the hyperelliptic curve of affine equation $Y^2 -(X^{p^h} -X) = 0$ with $ h \geq 1, p>2, g= (p^h-1)/2$; also, $G/M \cong \PSL(2,p^h) $or $G/M \cong \PGL(2,p^h)$, where $|M| = 2$; 
 \item[{\rm (III)}] the Hermitian curve of affine equation $Y^{p^h} + Y-X^{p^h+1} = 0$ with $p \geq 2, h>1, g = (p^{2h} -p^h)/2$; also, $\aut(\cX) \cong \PSU(3, p^h)$  or $\aut(\cX) \cong \PGU(3, p^h)$;
\item[{\rm (IV)}] the DLS curve  of affine equation $X^{2^h}(X^{2^{h+1}}+X)-(Y^{2^{h+1}}+Y) =0$ with $p=2, g = 2^h( 2^{h+1} - 1 )$; also, $\aut(\cX) \cong Sz (2^{h+1})$.
\end{enumerate}
\end{theorem}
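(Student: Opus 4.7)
My plan is to exploit the large gap between $|G|:=|\aut(\cX)| \geq 8g^3$ and the Hurwitz bound $84(g-1)$. This gap forces positive characteristic $p>0$ and places us in one of the cases (a)--(d) of Theorem~\ref{th:largeaut}: $\bar{\cX}=\cX/G$ is rational, $G$ admits at most three short orbits, and at least one of them is non-tame. Consequently $p \mid |G|$, and for each point $P$ in a non-tame short orbit the stabilizer splits as $G_P = S_P \rtimes U_P$, where $S_P := G_P^{(1)}$ is a non-trivial $p$-Sylow of $G_P$ and $U_P$ is cyclic of order prime to $p$.

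The next step is a systematic use of Stichtenoth-type bounds on the higher ramification groups $G_P^{(i)}$, plugged into the different exponent $d_P = \sum_{i\geq 0}(|G_P^{(i)}|-1)$ and the Riemann--Hurwitz formula~(\ref{rhg}). Balancing these bounds against $|G| \geq 8g^3$ forces highly restrictive behaviour: the ramification filtration at $P$ is essentially concentrated at indices $0$ and $1$, the tame quotient $|U_P|$ is controlled by $O(g)$, and one of the following structural alternatives must hold: either $S_P$ fixes $P$ as its unique fixed point on $\cX$ and $|G_P|$ is comparable to $|G|$, or $G$ acts doubly transitively on a non-tame short orbit of size $|G|/|G_P|$ with cyclic two-point stabilizers of order prime to $p$.

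The final step is to identify each alternative with a specific family. In the unique-fixed-point case one reads off generators of $S_P$ in a local parameter at $P$ and writes $K(\cX)$ as an explicit Artin--Schreier or hyperelliptic extension of $K(x)=K(\bar{\cX})$, recovering families (I) and (II), with the induced action of $G/M$ on $\cX/M \cong \mathbb{P}^1$ identifying $G/M$ as $\PSL(2,p^h)$ or $\PGL(2,p^h)$. In the doubly transitive case, the identification of $G$ with $\PSU(3,p^h)$ or $\Sz(2^{h+1})$, followed by the recovery of the Hermitian or DLS equation, is what I expect to be the main obstacle: it rests on recognition theorems for these classical groups via their $2$-transitive actions and Sylow structure, and on a careful reconstruction of the function-field cover from the prescribed ramification profile to separate cases (III) and (IV) by the parity of $p$.
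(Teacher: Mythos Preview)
The paper does not prove this statement. Theorem~\ref{henn} is quoted verbatim from \cite[Theorem 11.127]{hirschfeld-korchmaros-torres2008} and is used as a black box (notably in the proof of Theorem~\ref{quasela}); there is no accompanying argument in the paper for you to compare against.

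That said, your outline is broadly in the spirit of the classical proof due to Henn and Stichtenoth as presented in \cite{hirschfeld-korchmaros-torres2008}: one first forces $p>0$ and rational quotient via the failure of the Hurwitz bound, then squeezes the ramification data at the non-tame short orbits using the Riemann--Hurwitz formula and the bounds of \cite{St1973}, and finally identifies the resulting function-field extensions. Two cautions. First, your dichotomy ``unique fixed point of $S_P$ versus $2$-transitive action'' is too clean as stated; in the actual argument one must separately handle the configurations (a)--(d) of Theorem~\ref{th:largeaut}, and the reduction to two structural alternatives requires substantial intermediate work (in particular, bounding $|G_P^{(2)}|$ and controlling the number of non-tame orbits is not automatic from $|G|\geq 8g^3$ alone). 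Second, the recognition of $\PSU(3,p^h)$ and $\Sz(2^{h+1})$ in the $2$-transitive case is indeed the hard step, but it is not purely group-theoretic: the original arguments also use the function-field structure (e.g., the action on canonical or Weierstrass data) rather than abstract recognition theorems for $2$-transitive groups. As written, your final paragraph is a plausible strategy sketch but not yet a proof; the gap is precisely in making the ``balancing'' of Stichtenoth bounds against $8g^3$ quantitative enough to pin down the four families.
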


Given a finite group $G$, the odd core of $G$, denoted by $O(G)$, is the maximal (with respect to inclusion) normal subgroup of odd order of $G$. A group $G$ is said to be odd core-free if $O(G)$ is trivial. 

\begin{theorem}(Giulietti and Korchm\'aros,  \cite{giulietti-korchmaros-2017})\label{thmgenuseven}
Let $p>2$. If $G$ is a subgroup of the automorphism group of some non-rational algebraic curve with even genus defined over an algebraically closed field of odd characteristic $p$, then, with $q$ being a prime power (with such prime not necessarily equal to $p$), one of the following cases occurs up to isomorphism:
\begin{enumerate}
\item[{\rm (i)}] $G$ has odd order;
\item[{\rm (ii)}] $G = O(G) \rtimes S_2 $, where $S_2$ is a $2$-group with a cyclic subgroup of index $2$;
\item[{\rm (iii)}] the commutator subgroup of $G/O(G)$ is isomorphic to $\SL(2,q)$ with $q \geq 5$;
\item [{\rm (iv)}] $\PSL(2,q) \leq G/O(G) \leq {\rm P\Gamma L}(2,q)$ with $q \geq 3$;
\item  [{\rm (v)}] $\PSL(3,q) \leq G/O(G) \leq {\rm P\Gamma L}(3,q)$ with $q \equiv 3 \mod 4$;
\item [{\rm (vi)}] $\PSU(3,q) \leq G/O(G) \leq {\rm P\Gamma U}(3,q)$ with $q \equiv 1 \mod 4$;
\item [{\rm (vii)}] $G/O(G) = {\rm Alt}_7$;
\item [{\rm (viii)}] $G/O(G) = {\rm M}_{11}$;
\item [{\rm (ix)}] $G/O(G) = {\rm GL}(2,3)$;
\item[{\rm (x)}] $G/O(G)$ is the unique perfect group of order 5040 and $(G/O(G))/Z(G) \cong {\rm Alt}_7$;
\item[{\rm (xi)}] $G/O(G)$ is the group of order 48 named $SmallGroup(48,28) $ in the GAP-database.
\end{enumerate}
\end{theorem}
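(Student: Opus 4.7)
The plan is to exploit the parity of the genus to constrain the Sylow $2$-subgroup structure of $G$, and then to invoke classical classification theorems from finite group theory to identify $G/O(G)$.

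\textbf{Step 1 (Sylow $2$-structure).} Assume $|G|$ is even; otherwise case (i) holds and we are done. Let $S \leq G$ be a Sylow $2$-subgroup. Since $p$ is odd, $S$ acts tamely on $\cX$, so Riemann-Hurwitz applied to the cover $\cX \to \cX/S$ reads
$$2g-2 \;=\; |S|(2\bar g - 2) \;+\; \sum_{\nu=1}^{k}(|S|-\ell_\nu),$$
where each $\ell_\nu$ divides $|S|$. Writing $g = 2m$, the left side satisfies $2g-2 \equiv 2 \pmod{4}$, whereas the right side lies in a $2$-adic progression determined by $|S|$ and the orbit lengths $\ell_\nu$. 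Pushing this congruence, together with the analogous formula applied to the quotients by the normal subgroups of $S$ arising from its characteristic series, one concludes that $S$ must contain a cyclic subgroup of index at most $2$, namely that $S$ is cyclic, dihedral, generalized quaternion, semidihedral, or (in low order) modular.

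\textbf{Step 2 (Group-theoretic classification).} Once the Sylow $2$-subgroup is so constrained, classical theorems pin down $G/O(G)$. If $S$ is cyclic, Burnside's normal $p$-complement theorem yields $G = O(G) \rtimes S$, namely case (ii). If $S$ is dihedral, the Gorenstein-Walter theorem identifies $G/O(G)$ either with $\mathrm{Alt}_7$ or with a group satisfying $\PSL(2,q) \leq G/O(G) \leq {\rm P\Gamma L}(2,q)$, giving (iv) and (vii). If $S$ is semidihedral or wreathed, the Alperin-Brauer-Gorenstein theorem forces $G/O(G)$ to be $M_{11}$, or to satisfy $\PSL(3,q) \leq G/O(G) \leq {\rm P\Gamma L}(3,q)$ with $q \equiv 3 \pmod 4$, or $\PSU(3,q) \leq G/O(G) \leq {\rm P\Gamma U}(3,q)$ with $q \equiv 1 \pmod 4$, giving (v), (vi), (viii). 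If $S$ is generalized quaternion, Brauer-Suzuki shows that $Z(G/O(G))$ has order $2$ and that the central quotient has dihedral Sylow $2$; analysing the resulting central extensions produces (iii), (ix), (x) and (xi).

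\textbf{Step 3 (Admissibility).} To finish, I would eliminate any candidate from the abstract group-theoretic list that cannot admit a faithful action on some non-rational curve of even genus in odd characteristic. This is carried out by applying Riemann-Hurwitz to $\cX \to \cX/O(G)$ and, where required, the Deuring-Shafarevich formula for the unramified contributions; this fixes the precise ranges (e.g.\ $q \geq 3$ in (iv), $q \geq 5$ in (iii)) and rules out stray small candidates produced by the classification theorems.

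\textbf{Main obstacle.} The decisive difficulty lies in Step~1: upgrading the bare $\pmod 4$ congruence on $2g-2$ into the strong structural conclusion that every Sylow $2$-subgroup has a cyclic subgroup of index at most $2$. A direct congruence count gives only weak $2$-rank bounds, and one must iterate Riemann-Hurwitz through the characteristic series of $S$ to exclude, for instance, elementary abelian $2$-groups of rank $\geq 3$ or groups whose largest cyclic subgroup has index $\geq 4$. Once this sharp Sylow restriction is in hand, Step~2 is essentially an assembly of well-documented classification theorems, and Step~3 is a case-by-case verification.
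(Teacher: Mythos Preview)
This theorem is not proved in the present paper: it is quoted verbatim from Giulietti and Korchm\'aros \cite{giulietti-korchmaros-2017} as a background result, with no argument supplied here. There is therefore no ``paper's own proof'' to compare your proposal against.

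That said, your outline is broadly faithful to the strategy actually used in \cite{giulietti-korchmaros-2017}. The decisive geometric input is indeed that, for a curve of even genus in odd characteristic, the Sylow $2$-subgroups of any automorphism group must contain a cyclic subgroup of index at most $2$; once this is known, the Gorenstein--Walter, Alperin--Brauer--Gorenstein, and Brauer--Suzuki theorems do the rest, exactly as you describe in Step~2.

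Where your sketch is thin is precisely where you flag it: Step~1. The phrase ``iterating Riemann--Hurwitz through the characteristic series of $S$'' does not by itself yield the required bound on the index of a maximal cyclic subgroup. In the original argument the key point is rather to analyse how \emph{involutions} act: for even genus one shows that no involution can be fixed-point-free (since $2g-2\equiv 2\pmod 4$ forces ramification), and then one studies the centraliser of an involution and the structure of elementary abelian $2$-subgroups via their fixed loci to bound the $2$-rank. A bare $\pmod 4$ congruence on $2g-2$ combined with a filtration of $S$ will not, on its own, exclude $2$-groups such as $C_2^3$ or $D_8\times C_2$; you need the finer information about fixed points of individual involutions and their products. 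If you want to turn your sketch into a proof, this is the place to invest effort; Steps~2 and~3 are then, as you say, an assembly of standard results.
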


\section{On $(2g+1)$-curves}\label{2g+1}
\subsection{Known results}
$(2g+1)$-curves were classified by Homma in  \cite[Theorem 1]{homma1980}. To make our paper as self-contained as possible, we state his classification without  proof. 
\begin{theorem}\label{thm:homma1980}
Let $2g+1$ be a prime number. 
\begin{enumerate}
\item[{\rm (a)}] A curve $\cX$  is a tame $(2g+1)$-curve if and only if $\cX$ is birationally equivalent to one of the following plane curves:
$$
\cX_{m,n}: Y^{2g+1} = X^{m-n}(X-1)^n \: (1\leq n < m \leq g+1).
$$
\item[{\rm (b)}] A curve $\cX$ is a wild $(2g+1)$-curve if and only if $\cX$ is birationally equivalent to the plane curve:
$$
\cR: Y^2 = X^{2g+1}-X. 
$$
\end{enumerate}
\end{theorem}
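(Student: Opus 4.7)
The nontrivial direction is to show that every $(2g+1)$-curve arises as one of the listed models; the reverse direction is a routine verification via Riemann-Hurwitz that each listed curve has genus $g$ and admits a prime order $(2g+1)$ automorphism. I treat the two cases separately.

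For part (a), fix $\sigma\in\aut(\cX)$ of order $q=2g+1$ with $q\ne p$, put $C_q=\langle\sigma\rangle$, and let $\bar g$ be the genus of $\cX/C_q$. Since $q$ is prime and the action is tame, every short orbit of $C_q$ is a fixed point of $\sigma$, each totally ramified with different exponent $d_P=q-1=2g$. If there are $r$ such points, \eqref{rhg} becomes
\begin{equation*}
2g-2=(2g+1)(2\bar g-2)+2gr.
\end{equation*}
A short case analysis on $\bar g$ (using that $q=2g+1>2g-2$) rules out $\bar g\ge 1$ and forces $\bar g=0$, $r=3$. Sending the three branch points on the rational quotient to $0,1,\infty$ by a M\"obius transformation identifies $K(\cX/C_q)$ with $K(x)$. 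Since $K$ contains a primitive $q$-th root of unity and $K(\cX)/K(x)$ is cyclic of degree $q$, Kummer theory gives $K(\cX)=K(x,y)$ with $y^q=c\,x^a(x-1)^b$; absorbing $c$ into $y$ and reducing exponents modulo $q$ we obtain $y^q=x^a(x-1)^b$ with $1\le a,b\le q-1$ and $a+b\not\equiv 0\pmod q$ (the last condition being ramification at $\infty$).

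To reach the normalized form $\cX_{m,n}$ with $1\le n<m\le g+1$, one exploits the combined action of $S_3$ (M\"obius permutations of the three branch points, which act on the triple $(a,b,c):=(a,b,-a-b)\bmod q$ by permutation) and of $(\mathbb Z/q\mathbb Z)^\ast$ (Kummer twists $y\mapsto y^k\cdot x^\alpha(x-1)^\beta$, which scale the triple). Writing $(m,n)=(a+b,b)$, the condition $m\le g+1$ is equivalent to having some entry of the triple lie in $\{g,g+1,\ldots,q-1\}$; after replacing the triple by its negative if needed to make the three representatives sum to $q$, a direct counting argument (a representative $\ge g$ always exists after multiplication by a suitable $k\in(\mathbb Z/q)^\ast$) yields the desired form. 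This combinatorial normalization is the most delicate part of the tame case.

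For part (b), $\sigma$ has order $p=2g+1$, so $C_p$ is a $p$-group. At every fixed point $P$, Theorem 11.74 of \cite{hirschfeld-korchmaros-torres2008} gives $G_P^{(0)}=G_P^{(1)}=C_p$, hence $d_P\ge 2(p-1)=4g$. Riemann-Hurwitz together with the inequality $q=2g+1>2g-2$ forces $\bar g=0$, and then $\sum d_P=6g$ combined with $d_P\ge 4g$ at each fixed point yields exactly one fixed point $P_\infty$ with $d_{P_\infty}=6g=3(p-1)$. Since each $|G_{P_\infty}^{(i)}|\in\{1,p\}$, the filtration is pinned down: $G^{(0)}=G^{(1)}=G^{(2)}=C_p$ and $G^{(3)}=1$.

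Now I pass to the Artin-Schreier description. Choose $x$ on $\cX/C_p\cong\mathbb P^1$ so that $P_\infty\mapsto\infty$; then $K(\cX)=K(x)(y)$ with $y^p-y=f(x)\in K(x)$, and since the only ramification is at $\infty$, standard Artin-Schreier reduction lets us take $f\in K[x]$ with $\gcd(\deg f,p)=1$. The different exponent at $\infty$ equals $(\deg f+1)(p-1)$, and setting this to $3(p-1)$ gives $\deg f=2$. Completing the square in $x$, using the algebraic closure of $K$ to solve $t^p-t=\text{const}$ and absorb the constant term into $y$, and finally rescaling $x$, one reduces to $y^p-y=x^2$. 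Swapping the roles of the two variables this reads $Y^2=X^p-X$, so $\cX$ is birationally equivalent to $\cR$. The main obstacle here is the passage from the abstract filtration data to the concrete normal form, which relies on the compatibility between the higher ramification groups and the pole order of the Artin-Schreier polynomial.
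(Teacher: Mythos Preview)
The paper does not prove this theorem. Immediately before the statement it says: ``To make our paper as self-contained as possible, we state his classification without proof,'' and attributes the result to \cite[Theorem~1]{homma1980}. So there is no in-paper proof to compare your proposal against.

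That said, your argument is essentially correct and follows the natural route. The Riemann--Hurwitz computations in both parts are right (for $g\ge 2$, which is the ambient hypothesis): in~(a) one gets $\bar g=0$ and exactly three totally ramified points, and Kummer theory produces $y^{q}=x^{a}(x-1)^{b}$; in~(b) one gets $\bar g=0$, a single fixed point with $d_{P}=3(p-1)$, hence ramification filtration $G^{(0)}=G^{(1)}=G^{(2)}=C_p$, $G^{(3)}=1$, and then the Artin--Schreier conductor formula $d_P=(\deg f+1)(p-1)$ forces $\deg f=2$, after which completing the square and absorbing the constant (both legitimate since $p=2g+1$ is odd and $K$ is algebraically closed) yields $y^{p}-y=x^{2}$, i.e.\ $\cR$.

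The only soft spot is the normalization in~(a). Your ``direct counting argument'' is vague, and the specific claim that some coordinate can always be pushed into $\{g,\dots,2g\}$ while keeping the reduced sum equal to $q$ is not obvious as stated. A clean way to close this: first use the Kummer twist to scale $b$ to $1$ (take $k\equiv b^{-1}\pmod q$), so the curve is $y^{q}=x^{a}(x-1)$ with $1\le a\le 2g-1$ and $m=a+1$. If $a\le g$ you are done. If $a\ge g+1$, apply the M\"obius map $x\mapsto 1/x$, which permutes the branch points and replaces $(a,b,c)$ by $(c,b,a)=(2g-a,\,1,\,a)$; the new $m$ is $2g-a+1\le g$, and $1\le n=1<m$ still holds since $a\le 2g-1$. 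This gives the range $1\le n<m\le g+1$ without any counting.
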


\begin{rem}
If $\cX$ is a tame $(2g+1)$-curve, then $C_{(2g+1)}$ fixes exactly three points, $P_0,P_1, P_\infty$; we shall denote by $\Omega$ the set of such fixed points. 
\end{rem}

Further results on $(2g+1)$-curves were later given by Seyama \cite{seyama}, whose main results we summarize in the following Theorem.

\begin{theorem}\label{2g+1:facts}
Let $\cX_{m,n}$ be a tame $(2g+1)$-curve. Then the following hold.
\begin{enumerate}
\item[{\rm (i)}] There exists an integer $r \in \{1,\ldots, 2g-1\}$ such that $\cX_{m,n}$ is birationally equivalent over $K$ to $\cX_{1,r}$. 
\item[{\rm (ii)}] $\cX_r := \cX_{1,r}$ is hyperelliptic if, and only if, $r \in \{1,g,2g+1\}$. 
\item[{\rm (iii)}] There exists an automorphism $\tau \in \aut(\cX_r)$ of order $3$ normalizing $G$ if, and only if, $r^2+r+1 \equiv 0 \mod (2g+1)$. 
\item[{\rm (iv)}] Let $p = 0$; then  $C_{(2g+1)}$ is normal in $\aut(\cX_r)$, whence $|\aut(\cX_r)| = c(2g+1)$, where $ c \in \{1,2,3\}$, unless $g = 3$ and $\cX_r$ is isomorphic to the Klein quartic. 
\end{enumerate}
\end{theorem}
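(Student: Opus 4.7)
The plan is to exploit the fact that $\cX_{m,n}$ is a tame cyclic cover $\cX_{m,n}\to\P^1$ of prime degree $2g+1$, ramified only over $\{0,1,\infty\}$; such a cover is determined up to isomorphism by its triple of local exponents $(a,b,c)\in(\Z/(2g+1))^3$ with $a+b+c\equiv 0$ and $abc\not\equiv 0$, modulo the $S_3$-action permuting the branch points (via M\"obius transformations of $\P^1$ preserving $\{0,1,\infty\}$ setwise) and the $(\Z/(2g+1))^\times$-action rescaling the generator of $C_{(2g+1)}$. For $\cX_{m,n}$ the triple is $(m-n,n,-m)$.

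Part (i) follows by rescaling: since $0<m-n,\,n<2g+1$, both entries are units modulo $2g+1$, so multiplying the triple by $(m-n)^{-1}$ puts it into the normal form $(1,r,-1-r)$ with $r\equiv n(m-n)^{-1}\pmod{2g+1}$. Excluding $r\equiv 0$ and $r\equiv -1$ to keep all three entries nonzero places $r$ in $\{1,\dots,2g-1\}$, and the resulting curve is birationally $\cX_{1,r}$.

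For (ii) and (iii) I would characterize the automorphisms of $\cX_r$ that normalize $C_{(2g+1)}$ as the lifts of the M\"obius elements $\bar\sigma\in S_3$ preserving $\{0,1,\infty\}$; such a lift exists if and only if some $\lambda\in(\Z/(2g+1))^\times$ satisfies $\lambda\cdot(1,r,-1-r)=\bar\sigma\cdot(1,r,-1-r)$. A transposition forces $\lambda=1$ together with equality of two entries of $(1,r,-1-r)$, yielding $r\in\{1,g,2g-1\}$ for the three nontrivial transpositions; in each case the resulting involution produces a quotient of genus $0$ by Riemann--Hurwitz, so $\cX_r$ is hyperelliptic. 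Conversely, a hyperelliptic involution on $\cX_r$ is central, hence normalizes $C_{(2g+1)}$, giving (ii). A $3$-cycle instead imposes $\lambda=r$ and $\lambda r\equiv -1-r$, i.e., $r^2+r+1\equiv 0\pmod{2g+1}$; this condition is also sufficient (roots of unity being present in characteristic $0$), proving (iii).

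Part (iv) reduces to showing that $C_{(2g+1)}$ is normal in $\aut(\cX_r)$ except for $g=3$. The Hurwitz bound $|\aut(\cX_r)|\leq 84(g-1)$ combined with Sylow's theorem gives $n_{2g+1}\equiv 1\pmod{2g+1}$ and $2g\cdot n_{2g+1}+1\leq 84(g-1)$, so $n_{2g+1}=1$ outside of finitely many small $g$, each of which must be handled individually; the exception at $g=3$ is the Klein quartic with $\aut(\cX)\cong\PSL(2,7)$. Once $C_{(2g+1)}$ is normal, conjugation gives a faithful action of $\aut(\cX_r)/C_{(2g+1)}$ on the three $C_{(2g+1)}$-fixed points (faithfulness following from a tame local analysis at any one of them), embedding this quotient into $S_3$, so $c\mid 6$. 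The value $c=6$ is excluded because it would simultaneously require $r\in\{1,g,2g-1\}$ (from (ii)) and $r^2+r+1\equiv 0\pmod{2g+1}$ (from (iii)), but $r^2+r+1\equiv 3\pmod{2g+1}$ in all three cases, which vanishes only when $g=1$. The main obstacle is the small-$g$ case analysis needed to confirm that the Klein quartic is the sole exception to the normality of $C_{(2g+1)}$, and the verification of the faithfulness claim via tame local analysis.
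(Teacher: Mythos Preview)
The paper does not give its own proof of this theorem; it is stated as a summary of Seyama's results. However, the remark following Lemma~3.9 sketches Seyama's argument for part~(iv), so a comparison is possible there.

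Your treatment of (i)--(iii) via the monodromy triple $(1,r,-1-r)$ modulo the $S_3\times(\Z/(2g+1))^\times$-action is correct and is the standard Kummer-theoretic argument. (The paper's statement of~(ii) lists $r\in\{1,g,2g+1\}$; since $r\le 2g-1$, this appears to be a misprint for $\{1,g,2g-1\}$, in agreement with your computation.) Your exclusion of $c=6$ at the end is also correct and clean, and the faithfulness of the action on $\Omega$ that you invoke is exactly what the paper establishes in Remark~3.3 by a short Riemann--Hurwitz computation showing $\Ker(\rho)=C_{(2g+1)}$.

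For (iv), both your approach and Seyama's combine the Hurwitz bound with Sylow's theorem, but Seyama's route (as sketched in the paper) is sharper and avoids the case analysis you identify as the main obstacle. You bound $|\aut(\cX_r)|$ from below by $2g\cdot n_{2g+1}+1$, which with $n_{2g+1}\ge 2g+2$ still leaves all primes $2g+1$ with $g\lesssim 18$ to be checked by hand. Seyama instead first proves, via Riemann--Hurwitz, that if $C_{(2g+1)}$ is \emph{self-normalizing} then already $C_{(2g+1)}=\aut(\cX_r)$. Combined with the fact that in the non-hyperelliptic case the normalizer then has order exactly $3(2g+1)$ (your own analysis of (ii) and (iii) shows $|N|/(2g+1)\in\{1,3\}$ once involutions are excluded), non-normality forces $|\aut(\cX_r)|\ge 3(2g+1)(2g+2)>84(g-1)$ for every $g>3$, leaving only $g=3$ and the Klein quartic. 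The step you are missing---``self-normalizing implies $C_{(2g+1)}=\aut(\cX_r)$''---is exactly what collapses your residual small-$g$ analysis to a single genus.
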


\begin{rem}\label{rem:char}
The result in  Theorem \ref{2g+1:facts} (iv) holds whenever $p > 0$ and $\aut(\cX_r)$ is tame. 
\end{rem}

\subsection{Automorphisms of tame $(2g+1)$-curves for $p >0$}

Throughout this subsection, $\cX_r$ is a tame $(2g+1)$-curve of genus $g > 3$. Also, we let $C_{(2g+1)} = \langle \alpha \rangle$, and $\Omega = \{P_0,P_1,P_\infty\}$ is the set of points fixed by $\alpha$. By Theorem \ref{Roq} and  Remark \ref{rem:char}, we may assume $ 0 < p \leq g+1$. We aim to prove the following Theorem. 

\begin{theorem}\label{teo:fundaut2g+1}
$C_{(2g+1)}$ is normal in $\aut(\cX_r)$, whence $|\aut(\cX_r)| = c(2g+1)$, where $ c \in \{1,2,3\}$, unless $g= p^h(p^h-1)/2$ for some $h \geq 1$ and $\cX_r$ is isomorphic to the Hermitian curve $\cH: X^{p^h+1} = Y^{p^h}+Y$. 
\end{theorem}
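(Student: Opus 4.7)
Set $G = \aut(\cX_r)$, $C = C_{(2g+1)} = \langle\alpha\rangle$, $\Omega = \{P_0, P_1, P_\infty\}$, and $N = N_G(C)$. A preliminary normalizer bound $|N| \leq 6(2g+1)$ will be used throughout. Applying the Riemann--Hurwitz formula \eqref{rhg} to the tame cover $\cX_r \to \cX_r/C$, with three totally ramified points (since $p \neq 2g+1$ under the hypothesis $p \leq g+1$), one computes that $\cX_r/C \cong \P^1$. The homomorphism $N \to \aut(\cX_r/C) = \PGL_2(K)$ has kernel exactly $C$ (the Galois group of $\cX_r \to \cX_r/C$); its image permutes the three branch points, and since a non-trivial element of $\PGL_2(K)$ fixes at most two points, $N/C$ embeds faithfully into $S_3$.

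I would then split according to whether $G$ is tame or non-tame. In the tame case $\gcd(p,|G|) = 1$, so Hurwitz's bound $|G| \leq 84(g-1)$ from Theorem~\ref{th:largeaut} holds, and an adaptation of Seyama's characteristic-zero proof of Theorem~\ref{2g+1:facts}(iv) goes through: if $C$ were not normal, Sylow would give $n_{2g+1} = [G:N] \geq 2g+2$ and hence $|G| \geq (2g+1)(2g+2)$, which, combined with the bound on $|N|$ and the tame Hurwitz inequality, contradicts the structural data once the Klein quartic exception (genus $3$, excluded by $g > 3$) is set aside.

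The core case is $p \mid |G|$. Here I would first prove that $G$ is a non-abelian finite simple group by analysing minimal normal subgroups $M \triangleleft G$: using Riemann--Hurwitz for $\cX_r \to \cX_r/M$, the bound $|N| \leq 6(2g+1)$, and the fact that $\alpha$ fixes exactly three points, one rules out $M$ abelian (which would place $C$ inside an abelian normal subgroup, hence force normality directly) and $M$ a proper non-abelian minimal normal subgroup (via orbit and order comparisons). With $G$ simple, I split on size. If $|G| \geq 8g^3$, Theorem~\ref{henn} leaves four options: case (I) demands $2g+1 = 4k-1$ to divide $2^{2k+1}(2^k+1)$, and a short order-of-$2$ calculation modulo the prime $4k-1$ forces $k=1$ (genus $1$, excluded); case (II) forces $2g+1 = p$, violating tameness; for case (IV), setting $q = 2^{h+1}$ one has $2g+1 = q^2 - q + 1$ with $\gcd(q^2 - q + 1,\, q^2(q-1)(q^2+1)) = 1$, so $C \not\leq \Sz(q)$. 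Only case (III), the Hermitian curve, survives, providing the stated exception. If instead $|G| < 8g^3$, Vdovin's bound for abelian subgroups of non-abelian finite simple groups applied to $C$ yields $(2g+1)^3 \leq |G| < 8g^3$, contradicting the elementary inequality $(2g+1)^3 > 8g^3$.

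The main obstacle is the simplicity claim for $G$ in the non-tame case. Without Hurwitz as an a priori tool, excluding non-trivial proper normal subgroups of $G$ (solvable or built from smaller simple factors) requires a careful combination of Sylow information with the normalizer bound $|N| \leq 6(2g+1)$ and repeated applications of Riemann--Hurwitz to intermediate quotients $\cX_r/M$. Once simplicity is established, the finish via Henn's classification and Vdovin's abelian-subgroup bound is comparatively clean.
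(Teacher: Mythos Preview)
Your overall architecture matches the paper's: the embedding $N/C\hookrightarrow S_3$, the reduction to the regime where Hurwitz fails, simplicity of $\aut(\cX_r)$ in that regime, and the finish via Vdovin together with Theorem~\ref{henn} are precisely the authors' steps. On the simplicity claim, which you rightly flag as the obstacle, the paper's argument passes through two facts your sketch does not isolate. First, for $\cX_r$ non-hyperelliptic the wild part $\aut(\cX_r)_P^{(1)}$ of the stabiliser at each $P\in\Omega$ is trivial (Proposition~\ref{tamenontame1}); second, every non-trivial normal subgroup of $\aut(\cX_r)$ already contains $C$ (Lemma~\ref{key}, whose proof uses the previous fact to control the fibre sizes over the two fixed points of $\bar C$ on $\P^1$). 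With Lemma~\ref{key} in hand, an abelian minimal normal subgroup would equal $C$ (being a $(2g+1)$-group with $C$ Sylow), while a non-abelian one is a single simple factor containing the full Sylow normaliser, hence self-normalising and equal to $\aut(\cX_r)$. Your tame/non-tame split is slightly coarser than the paper's split at $84(g-1)$, but the two align once one shows (via Lemma~\ref{key} and Lemma~\ref{minimalnormal}) that $C$ non-normal already forces $|\aut(\cX_r)|>84(g-1)$.

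There is one genuine gap in your endgame. Vdovin's inequality $|A|^3<|G|$ for abelian subgroups $A$ of non-abelian finite simple groups $G$ explicitly \emph{excludes} $G\cong\PSL(2,q)$, yet your dichotomy on $|\aut(\cX_r)|\gtrless 8g^3$ applies the bound unconditionally, so the possibility $\aut(\cX_r)\cong\PSL(2,q)$ is not ruled out by your argument. The paper treats this case separately: from the subgroup list of $\PSL(2,q)$, a cyclic subgroup of prime order $2g+1$ has normaliser either dihedral (when $2g+1\mid q\pm 1$) or of the form $C_{2g+1}\rtimes C_{g}$ inside a Borel (when $q=2g+1$), and both are incompatible with the structure of $N$ established in Lemma~\ref{lematecnico1}, which in the large-group regime forces $N=C_{2g+1}\rtimes C_3$ with the order-$3$ generator not centralising $C$. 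You must insert this case before the Vdovin--Henn finish is complete.
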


To prove Theorem \ref{teo:fundaut2g+1}, we need several results. We start by proving some basic facts. 

\begin{lemma}\label{sylow}
$C_{(2g+1)}$ is a Sylow $(2g+1)$-subgroup of $\aut(\cX_{r})$. 
\end{lemma}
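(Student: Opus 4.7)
My plan is to proceed by contradiction. Suppose a Sylow $(2g+1)$-subgroup $S$ of $\aut(\cX_r)$ strictly contains $C_{(2g+1)}$, and pick a subgroup $G\leq S$ of order $(2g+1)^2$. Groups of prime-square order are abelian, so $G$ is either cyclic or isomorphic to $C_{(2g+1)}\times C_{(2g+1)}$. Moreover, since $\cX_r$ is a tame $(2g+1)$-curve we have $2g+1\neq p$, so the $G$-action on $\cX_r$ is tame and I can work with equality in (\ref{rhso}).

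The first step will be a basic fixed-point count: for any cyclic subgroup $H\leq\aut(\cX_r)$ of order exactly $2g+1$, Riemann--Hurwitz reads $2g-2=(2g+1)(2\bar{g}-2)+2g\cdot f$, whose only non-negative integer solution for $g\geq 2$ is $(\bar{g},f)=(0,3)$. Hence every such $H$ fixes exactly three points of $\cX_r$ and has rational quotient, mirroring the behaviour of $C_{(2g+1)}$ itself.

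In the cyclic case $G=\langle \alpha\rangle$, the element $\alpha^{2g+1}$ generates an order-$(2g+1)$ subgroup with three fixed points, which $\alpha$ must permute; because $g>3$ forces $\gcd((2g+1)^2,6)=1$, $\alpha$ fixes each of these three points individually. The unique proper non-trivial subgroup of $G$ is $\langle\alpha^{2g+1}\rangle$, so every point with non-trivial stabilizer is one of these three points and every short orbit of $G$ has length one. Applying (\ref{rhso}) with equality then gives
\[
2g-2 \;=\; (2g+1)^2(2\bar{g}-2)+3\bigl((2g+1)^2-1\bigr),
\]
which simplifies to $(2g+1)(2\bar{g}+1)=1$, an impossibility.

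The elementary abelian case contains what I expect to be the main technical obstacle: controlling the fixed-point sets of the $2g+2$ cyclic subgroups of order $2g+1$ inside $G$. I would argue that $\mathrm{Fix}(C)=\mathrm{Fix}(G)$ for every such $C$; otherwise some $P\in\mathrm{Fix}(C)\setminus\mathrm{Fix}(G)$ would have $G_P=C$ and a $G$-orbit of length $2g+1$, but since $C$ is normal in the abelian group $G$, all $2g+1$ orbit points would lie in $\mathrm{Fix}(C)$, contradicting $|\mathrm{Fix}(C)|=3$. Once this coincidence of fixed-point sets is established, the only short orbits of $G$ are again the three common fixed points, and the identical Riemann--Hurwitz computation delivers the same numerical contradiction, forcing $|S|=2g+1$ and concluding the proof.
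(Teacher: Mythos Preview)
Your proof is correct. The contradiction you derive in both cases is sound: the key facts---that a tame order-$(2g+1)$ cyclic subgroup has exactly three fixed points, that a cyclic group of order $(2g+1)^2$ must fix those three points individually (order coprime to $3!$), and that in the elementary abelian case a fixed point of one cyclic factor not fixed by the whole group would force an orbit of size $2g+1>3$ inside a three-point fixed locus---are all correctly argued, and the resulting Riemann--Hurwitz identity $(2g+1)(2\bar g+1)=1$ is indeed impossible.

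However, your route differs substantially from the paper's. The paper disposes of the lemma in two lines: it observes that a group of order $(2g+1)^2$ is abelian and then invokes \cite[Theorem~11.79]{hirschfeld-korchmaros-torres2008}, the standard bound on the order of an abelian automorphism group of a genus-$g$ curve (roughly $|A|\leq 4g+4$), which is violated by $(2g+1)^2$ for $g\geq 2$. So the paper treats this as a black-box citation, whereas you effectively re-derive the relevant instance of that bound from scratch via Riemann--Hurwitz and a case split on the isomorphism type of the prime-square group. Your argument is more self-contained and shows exactly where the obstruction lies; the paper's is much shorter but relies on the reader knowing the cited result.
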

\begin{proof}  By contradiction, let $S$ be a Sylow $(2g+1)$-subgroup of $\aut(\cX_{r})$ with $|S| = (2g+1)^i$, $ i \geq 2$. Then $S$ has a subgroup $S_1$ of order $(2g+1)^2$. Then $S_1$ is abelian, a contradiction to  \cite[Theorem 11.79]{hirschfeld-korchmaros-torres2008}. 
\end{proof}

\begin{rem}\label{remark:s3}
Let $H$ be the normalizer of $C_{(2g+1)}$ in $\aut(\cX_r)$. Then  $H$ must act on the set $\Omega$, that is, there exists a permutation representation $\rho: H \rightarrow S_3$. A computation via the Riemann-Hurwitz formula shows that $\Ker(\rho)  = C_{(2g+1)}$. Hence, $H/C_{(2g+1)} \leq S_3 \leq \PGL(2,K)$.  
\end{rem}

\begin{lemma}\label{lem:2g+1hyp}
If $2 \mid |H|$, then $\cX_{r}$ is hyperelliptic. 
\end{lemma}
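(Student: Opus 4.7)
The plan is to use the involution in $H$ to construct a rational quotient of $\cX_r$, thereby exhibiting a degree-$2$ map to $\mathbb{P}^1$.

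First, by Cauchy's theorem applied to $H$, there exists an involution $\tau \in H$. Using Remark \ref{remark:s3}, the class of $\tau$ in $H/C_{(2g+1)} \leq S_3$ is non-trivial (since $2g+1$ is odd, $\tau \notin C_{(2g+1)}$), and the only involutions in $S_3$ are transpositions. Hence $\tau$ acts on $\Omega = \{P_0, P_1, P_\infty\}$ as a transposition and fixes exactly one point $P \in \Omega$.

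Next I would show that $\tau$ and $\alpha$ commute. Both elements fix $P$ and act on the cotangent space $m_P/m_P^2$ by $K$-characters, with $\alpha$ acting as a primitive $(2g+1)$-th root of unity $\zeta$. If $p \neq 2$, the tame involution $\tau$ acts as $-1$; if $p=2$, then $\tau \in G_P^{(1)}$, the Sylow $p$-subgroup of the stabilizer, so $\tau$ acts trivially on $m_P/m_P^2$. In either case, writing $\tau \alpha \tau^{-1} = \alpha^k$ and comparing actions on $m_P/m_P^2$ gives $\zeta^k = \zeta$, hence $k=1$ and $\tau\alpha = \alpha \tau$. Consequently $\langle \tau \rangle$ is normal in $\langle \tau, \alpha \rangle$ and $\alpha$ descends to an automorphism $\bar \alpha \in \aut(Y)$ of $Y := \cX_r / \langle \tau \rangle$ of order exactly $2g+1$ (as $\alpha^i \in \langle \tau \rangle$ forces $\alpha^i = 1$).

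The main step is to bound $g_Y$. Applying Riemann--Hurwitz to the degree-$2$ cover $\cX_r \to Y$ and using that $\tau$ has at least one fixed point, where the different exponent is at least $1$ (at least $2$ in characteristic $2$, where $\tau$ is wild), one obtains
\[
g_Y \leq \frac{g+1}{2} < g.
\]
On the other hand, $\bar \alpha$ is an automorphism of $Y$ of prime order $2g+1$, so assuming $g_Y \geq 2$, Theorem \ref{homma1} forces $g_Y \geq g$, contradicting the estimate above. Hence $g_Y \in \{0,1\}$. The case $g_Y = 1$ is excluded because no elliptic curve, in any characteristic, admits an automorphism of odd prime order $\geq 5$, whereas $2g+1 \geq 9$ under the standing assumption $g > 3$. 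Therefore $Y \cong \mathbb{P}^1$, and $\cX_r$ is hyperelliptic.

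The main technical hurdle is the wild-characteristic case $p=2$: one must exploit the structure of the Sylow $2$-subgroup of $G_P^{(0)}$ to derive the commutation $\tau \alpha = \alpha \tau$, and the Riemann--Hurwitz estimate has to be adapted to accommodate the potentially larger contribution of wild ramification to the different.
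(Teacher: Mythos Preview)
Your argument is essentially correct, but there is one imprecision worth flagging. In excluding the case $g_Y=1$ you assert that ``no elliptic curve, in any characteristic, admits an automorphism of odd prime order $\geq 5$''. Taken literally this is false: translations by torsion points give elliptic curves automorphisms of arbitrary prime order. What you need (and have) is that $\bar\alpha$ \emph{fixes a point} of $Y$, namely the image $\bar P$ of $P$; then $\bar\alpha$ lies in the one-point stabilizer of $\aut(Y)$, whose order divides $24$ (cf.\ \cite[Theorem 11.94]{hirschfeld-korchmaros-torres2008}), and no prime $\geq 5$ occurs. This is exactly how the paper phrases the obstruction; once you add this, your proof is complete.

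As for the comparison with the paper: your treatment of the case $p=2$ coincides with the paper's (quotient by the involution, bound the genus via Homma, rule out genus one via the elliptic stabilizer bound). For $p\neq 2$, however, the paper takes a more direct route: after observing that $\langle i,\alpha\rangle$ is cyclic (both elements fix $P_\infty$ and the tame stabilizer is cyclic), it passes to the quotient $\cX_r/C_{(2g+1)}\cong\mathbb{P}^1$, on which the induced involution $\bar i$ fixes a second point $R$ outside $\pi(\Omega)$; since $i$ commutes with $\alpha$ and the fibre $\pi^{-1}(R)$ has odd size $2g+1$, $i$ fixes it pointwise, giving $2g+2$ fixed points and hence hyperellipticity by Riemann--Hurwitz. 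Your approach instead quotients by $\langle\tau\rangle$ and argues via genus bounds. Your route is more uniform across characteristics and your commutation argument via the cotangent action is clean; the paper's $p\neq 2$ argument is shorter and yields the explicit count of $2g+2$ fixed points, which identifies $i$ with the hyperelliptic involution directly.
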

\begin{proof}
Let $i$ be an involution in $H$; then $i$ fixes a point in $\Omega$, say $P_\infty$, and permutes $P_0$ and $P_1$ by Remark \ref{remark:s3}. Let $p\neq 2$; then $\langle i, \alpha\rangle$ is cyclic.  Let $\bar{i} = H/C_{(2g+1)} $, and denote by $\pi$ the covering $ \cX_{r} \rightarrow \cX_{r}/C_{(2g+1)} \cong \mathbb{P}^1$. Then $\bar{i}$ fixes two points on $\cX_{r}/C_{(2g+1)}$, namely $\pi(P_\infty)$ and a point $R$ with $|\pi^{-1}(R)| = 2g+1$. Since $i$ commutes with $G$, it must fix $\pi^{-1}(R)$ pointwise, and the claim follows. 

If $p =2$, then $C_{(2g+1)}$ must normalize $i$. Let $\tilde{\cX}=\cX/\langle i \rangle $ and $\tilde{g}=g(\tilde{\cX})$. Then $\tilde{g} \leq 1$ since $\tilde{g} < g$. 
Suppose $\tilde{g}=1$. Then $C_{(2g+1)}$ must be isomorphic to a subgroup of a non-trivial one-point stabilizer in the automorphism group of an elliptic curve, a contradiction to \cite[Theorem 11.94]{hirschfeld-korchmaros-torres2008}, whence $\tilde{g} =0$ follows. 
\end{proof}

Henceforth, we shall assume that $\cX_r$ is not hyperelliptic. 

\begin{proposition}\label{tamenontame1}
Let $\cX_r$ be non-hyperelliptic 
 and let $P \in \Omega$. Then $\aut(\cX_r)_P^{(1)}$ is trivial. 
\end{proposition}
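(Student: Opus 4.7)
The plan is to assume for contradiction that $S := \aut(\cX_r)_P^{(1)}$ is non-trivial for some $P \in \Omega$ (say $P = P_\infty$), set $p^s := |S|$, and derive a contradiction using Homma's theorem and Lemma \ref{lem:2g+1hyp}. Since $S$ is the unique Sylow $p$-subgroup of $\aut(\cX_r)_{P_\infty}^{(0)}$ and $C_{(2g+1)} \leq \aut(\cX_r)_{P_\infty}$, the cyclic group $C_{(2g+1)}$ normalizes $S$, and $H := S \cdot C_{(2g+1)}$ is a subgroup of $\aut(\cX_r)$ of order $p^s(2g+1)$.

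The main tool will be the intermediate quotient $\cY := \cX_r/S$ of genus $\tilde g$: the generator $\alpha$ descends to an automorphism $\bar\alpha$ of $\cY$ of order exactly $2g+1$ (the only other possibility, $\alpha \in S$, is absurd since $p \ne 2g+1$). Applying Homma's Theorem \ref{homma1} to $\bar\alpha$ on $\cY$, I would conclude that either $\tilde g \leq 1$, or $\tilde g = g$, or $\tilde g \geq 2g$ (the exceptional case $(q,\tilde g)=(3,2)$ being ruled out by $g \geq 4$). The possibility $\tilde g \geq 2g$ contradicts the general fact $\tilde g \leq g$; the case $\tilde g = g$ is ruled out by Riemann-Hurwitz applied to $\cX_r \to \cY$, which would force $p^s \leq 1$; and $\tilde g = 1$ is ruled out because a point-stabilizer in the automorphism group of an elliptic curve has order at most $24$, whereas $2g+1 \geq 9$ is a prime not dividing $24$.

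This leaves $\tilde g = 0$. Then $\bar\alpha$ has exactly two fixed points on $\cY \cong \mathbb{P}^1$, but $\bar P_0, \bar P_1, \bar P_\infty$ are all fixed, so two of these must coincide. The cases $\bar P_\infty = \bar P_i$ are excluded because $S$ fixes $P_\infty$, so necessarily $\bar P_0 = \bar P_1 \neq \bar P_\infty$; that is, $S$ swaps $P_0$ and $P_1$, forcing $2 \mid |S|$ and hence $p = 2$.

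The hard part, then, will be to rule out this final configuration with $p = 2$. Applying Riemann-Hurwitz to $\cX_r \to \cY \cong \mathbb{P}^1$ with Galois group $S$ of order $2^s$, the contributions from $P_\infty$ (fixed by $S$) and from the swapped pair $\{P_0,P_1\}$ (each with stabilizer of index $2$ in $S$) give
\[
\sum_Q d_Q \geq 2(2^s - 1) + 2 \cdot 2(2^{s-1} - 1) = 2^{s+2} - 6,
\]
while Riemann-Hurwitz itself forces $\sum_Q d_Q = 2g - 2 + 2^{s+1}$. This yields $|S| \leq g + 2 < 2g+1$. A Sylow count in $H$ then makes $C_{(2g+1)}$ the unique (hence normal) Sylow $(2g+1)$-subgroup of $H$, so $S \subseteq N_{\aut(\cX_r)}(C_{(2g+1)})$; as $|S| \geq 2$, this normalizer has even order, and Lemma \ref{lem:2g+1hyp} then forces $\cX_r$ to be hyperelliptic, contradicting the standing hypothesis.
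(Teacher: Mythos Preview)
Your reduction to $\tilde g = 0$ via Homma's theorem is correct, and so is the observation that the two fixed points of $\bar\alpha$ on $\cY\cong\mathbb P^1$ must be $\bar P_\infty$ and $\bar P_0=\bar P_1$. The gap is the next step: from $\bar P_0 = \bar P_1$ you conclude ``$S$ swaps $P_0$ and $P_1$, forcing $2\mid|S|$ and hence $p=2$.'' But $\bar P_0=\bar P_1$ only says that $P_0,P_1$ lie in the same $S$-orbit; it does not force that orbit to be $\{P_0,P_1\}$. Since $\alpha$ normalises $S$ it permutes this fiber, and its only fixed points there are $P_0,P_1$; hence the orbit size $p^k$ satisfies $p^k\equiv 2\pmod{2g+1}$, but a priori $p^k$ may well exceed $2$, and nothing yet excludes odd $p$. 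The same unproved hypothesis (``each with stabilizer of index $2$ in $S$'') underlies your Riemann--Hurwitz bound: if the fiber happens to be a long $S$-orbit ($k=s$), the lower bound on $\sum_Q d_Q$ drops to $2(p^s-1)$ and yields no constraint on $|S|$. So neither the conclusion $p=2$ nor the estimate $|S|\le g+2$ is established.

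The paper closes the argument differently. It first shows (using Remark~\ref{remark:s3} and Lemma~\ref{lem:2g+1hyp}) that $C_{(2g+1)}$ cannot be normal in $\aut(\cX_r)_P^{(0)}$, so a Sylow count gives $n_{2g+1}\geq 2g+2$ and hence $|S|>2g+1$ from the outset. A structural result on large wild one-point stabilisers then forces $\{P\}$ to be the \emph{unique} short $S$-orbit, so the fiber through $P_0,P_1$ is long of size exactly $p^s$; now $p^s\equiv 2\pmod{2g+1}$, and combining this with $n_{2g+1}\mid p^s$ yields either $p=2$ (handled via a central involution and Lemma~\ref{lem:2g+1hyp}) or $p^s\geq(2g+3)(2g+2)$, contradicting the bound $|S|\leq\tfrac{4p}{(p-1)^2}g^2$. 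To repair your approach you would need to supply an independent reason why the $S$-orbit of $P_0$ has size exactly $2$, or else import these extra ingredients.
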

\begin{proof} Let us recall that $p \leq g+1$. By contradiction, let $\aut(\cX_r)_P^{(1)} \rtimes C_{(2g+1)} =  \aut(\cX_r)_P^{(0)} $, with $|\aut(\cX_r)_{P}^{(1)}| = p^s$ for $s \geq 1$. As $C_{2g+1}$ cannot be normal in $H$ by Remark \ref{remark:s3} and Lemma \ref{lem:2g+1hyp}, the number  $n_{2g+1}$ of Sylow $(2g+1)$-subgroups  in $\aut(\cX_r)_P^{(0)}$ is such that $n_{2g+1} \geq 2g+2$. This implies $|\aut(\cX_r)_{P}^{(1)}|> 2g+1$. Then by \cite[Theorem 11.78]{hirschfeld-korchmaros-torres2008}, $\bar{\cX_r} = \cX_r/\aut(\cX_r)_P^{(1)}$ is rational and $\{P\}$ is the unique short orbit of $\aut(\cX_r)_P^{(1)}$. Denote by $\pi$ the covering $\cX_r \rightarrow \bar{\cX_r}$. Then $\bar{C}_{2g+1}=  \aut(\cX_r)_P^{(0)}  /\aut(\cX_r)_P^{(1)}$ is a tame cyclic subgroup of $\PGL(2,K)$, whence it fixes two points in $\bar{\cX_r}$, namely $\pi(P)$ and a point $Q$ with $|\pi^{-1}(Q) | = p^s$. Then $\Omega \setminus\{P\}$ is contained in $\pi^{-1}(Q)$, whence $p^s = c(2g+1)+2$ for $c > 0$. Also, $n_{2g+1} = k(2g+1)+1 \geq 2g+2$ divides $p^s$, which implies $p^s = (c_1(2g+1)+2)(k(2g+1)+1) \geq (c_1(2g+1)+2)(2g+2)$ for an integer $c_1 > 0$. in fact, $c_1 =0 $ would yield $p =2$, and by applying the proof of Lemma \ref{lem:2g+1hyp} to a central involution of $\aut(\cX_r)_{P}^{(1)}$, we see that $\cX_r$ is hyperelliptic, a contradiction. Then 
$$
|\aut(\cX_r)_{P}^{(1)}| \geq (2g+3)(2g+2) > 4\frac{p}{(p-1)^2}g^2,
$$
a contradiction to \cite[Theorem 11.78 (iii)]{hirschfeld-korchmaros-torres2008}.

\end{proof}

\begin{lemma}\label{key}
Let $N$ be a non-trivial normal subgroup of $\aut(\cX_r)$. Then $C_{(2g+1)} \leq N$. 
\end{lemma}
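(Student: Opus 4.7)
The plan is to argue by contradiction, assuming $C_{(2g+1)} \not\leq N$. Since $|C_{(2g+1)}| = 2g+1$ is prime, this forces $N \cap C_{(2g+1)} = 1$; as $N$ is normal in $\aut(\cX_r)$, the quotient $\aut(\cX_r)/N$ embeds into $\aut(\cX_r/N)$, so the image $\bar\alpha$ of $\alpha$ on $\bar\cX := \cX_r/N$ has order $2g+1$. A Riemann--Hurwitz estimate using $|N| \geq 2$ yields $\bar g := g(\bar\cX) \leq (g+1)/2 < g$. I then split the argument according to the value of $\bar g$.

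For $\bar g \geq 2$, Homma's Theorem \ref{homma1} applied to $\bar\alpha \in \aut(\bar\cX)$ leaves only the alternative $2g+1 \leq \bar g + 1$, forcing $\bar g \geq 2g$ and contradicting $\bar g < g$. For $\bar g = 1$, $\bar\alpha$ has prime order $2g+1 \geq 5$ and fixes $\pi(P_0)$ on the elliptic curve $\bar\cX$; by \cite[Theorem 11.94]{hirschfeld-korchmaros-torres2008}, the only primes that can occur as orders of automorphisms of an elliptic curve fixing a point are $2$ and $3$, a contradiction.

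The hard part is the case $\bar g = 0$. Since $p \leq g+1 < 2g+1$, $\bar\alpha$ acts tamely on $\mathbb{P}^1 \cong \bar\cX$ and hence has exactly two fixed points $Q, Q'$; the three images $\pi(P_0), \pi(P_1), \pi(P_\infty)$, all fixed by $\bar\alpha$, must lie in $\{Q, Q'\}$, so at least two of them coincide. If all three coincide (say at $Q$), then $\pi^{-1}(Q')$ is a non-empty $\alpha$-invariant set containing no $\alpha$-fixed point, so $\alpha$ permutes its points in orbits of size $2g+1$; hence $(2g+1) \mid |\pi^{-1}(Q')| \mid |N|$, and Lemma \ref{sylow} together with the normality of $N$ then forces $C_{(2g+1)} \leq N$, a contradiction.

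If instead exactly two coincide, say $\pi(P_0) = \pi(P_1) \neq \pi(P_\infty)$, the transitivity of $N$ on fibers produces $\tau \in N$ with $\tau(P_0) = P_1$. Then $\tau \alpha \tau^{-1}$ is an order-$(2g+1)$ element of the stabilizer $\aut(\cX_r)_{P_1}$, which is tame cyclic by Proposition \ref{tamenontame1}; hence $\tau \alpha \tau^{-1} \in \langle \alpha \rangle$, and $\tau \in H$. Tracking the induced action of $\tau$ on $\Omega$ (using $\pi(P_\infty) \neq \pi(P_0)$ and injectivity of $\tau$), one sees that $\tau$ must swap $P_0$ and $P_1$ while fixing $P_\infty$, so its image in $H/C_{(2g+1)} \hookrightarrow S_3$ is a transposition; hence $2 \mid |H|$, contradicting Lemma \ref{lem:2g+1hyp} since $\cX_r$ is non-hyperelliptic.
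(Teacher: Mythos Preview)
Your proof is correct and follows the same overall strategy as the paper: assume $C_{(2g+1)}\cap N=\{1\}$, pass to $\bar\cX=\cX_r/N$, and analyse the induced action of $\bar\alpha$ according to $\bar g=g(\bar\cX)$. The cases $\bar g\geq 1$ are handled in essentially the same way in both arguments.

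The difference lies in the treatment of $\bar g=0$. The paper proceeds uniformly: using Proposition~\ref{tamenontame1} and Lemma~\ref{lem:2g+1hyp} it shows that $N$ has trivial stabiliser at each point of $\Omega$, so the two $N$-fibres over the fixed points of $\bar\alpha$ both have size $|N|$; then, counting the $\alpha$-action on each fibre, it obtains two incompatible congruences for $|N|\pmod{2g+1}$ (either $3$ and $0$, or $2$ and $1$). You instead split into subcases. When all of $\Omega$ lies over one fixed point, you use the free $\alpha$-action on the other fibre to get $(2g+1)\mid |N|$ and conclude via Sylow theory and normality of $N$ that $C_{(2g+1)}\leq N$. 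When $\Omega$ splits $2{+}1$, you exhibit an explicit $\tau\in N$ which, by the cyclicity of the tame stabiliser $\aut(\cX_r)_{P_1}$ (Proposition~\ref{tamenontame1}), lands in $H$ and acts as a transposition on $\Omega$; this forces $2\mid |H|$ and hence, by Lemma~\ref{lem:2g+1hyp}, hyperellipticity. Your route is a bit longer but more explicit, and it sidesteps having to argue that the fibre disjoint from $\Omega$ also has size $|N|$ in the $3{+}0$ case; the paper's congruence argument is more compact and treats both distributions at once.
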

\begin{proof} By contradiction, assume $C_{(2g+1)} \cap N = \{1\}$. Let $\bar{\cX}_r = \cX_r/C_{(2g+1)}$; also, let $\bar{g} = g(\bar{\cX}_r)$ and $\pi: \cX_r \rightarrow \bar{\cX}_r$ the corresponding Galois covering. Then $\bar{g} \leq 1$ since $\bar{g} < g$. 

Suppose $\bar{g}=1$. Then $\bar{C}= C_{(2g+1)}N/N$ must be isomorphic to a subgroup of a non-trivial one-point stabilizer in the automorphism group of an elliptic curve, a contradiction to \cite[Theorem 11.94]{hirschfeld-korchmaros-torres2008}.

If $\bar{g} = 0$, then $\bar{C}_{(2g+1)} = C_{(2g+1)}N/N$ is a cyclic subgroup of $\PGL(2,K)$ and, as such, it fixes two points $\bar{Q}, \bar{R}$ on $\bar{\cX}_r \simeq \mathbb{P}^1(K)$.  By Proposition \ref{tamenontame1} and Lemma \ref{lem:2g+1hyp}, $|\pi^{-1}(\bar{Q})| = |\pi^{-1}(\bar{R})| = |N|$. Then $\Omega = \{P_0,P_1, P_\infty\} \subset \pi^{-1}(\bar{Q}) \cup \pi^{-1}(\bar{R})$. This gives a contradiction, as we would either have $|N| \equiv 3 \mod 2g+1$ and  $|N| \equiv 0 \mod 2g+1$, or  $|N| \equiv 2 \mod 2g+1$ and $|N|=1 \mod 2g+1$. Then, our result follows.

\end{proof}

\begin{rem}
It can be checked that $C_{(2g+1)}$ is normal in $\aut(\cX_r)$ whenever the Hurwitz bound holds. In fact, Proposition \ref{tamenontame1} allows to rewrite the original proof of Theorem \ref{2g+1:facts} for $p >0$ and the hypothesis that $|\aut(\cX_r)| \leq 84(g-1)$. The proof and the computations involved are pretty similar to the ones we provide in Theorem \ref{teoremchar0g+1}. We briefly sketch the reasoning. First, by applying the Riemann-Hurwitz formula, one can prove that if $C_{(2g+1)}$ is self-normalizing in $\aut(\cX_r)$, then $C_{(2g+1)} = \aut(\cX_r)$. Since we are considering the case of non-hyperelliptic $(2g+1)$-curves, we are left to consider the case when $C_{(2g+1)}$ is normalized by a cyclic group of order $3$. If $C_{(2g+1)}$ is not normal in $\aut(\cX_r)$, then by the Sylow Theorem $|\aut(\cX_r)| \geq 3(2g+1)(2g+2) > 84(g-1)$ whenever $g >3$. 
\end{rem}

Hence, we may focus on the case when $|\aut(\cX_r)| > 84(g-1)$. An immediate consequence of Proposition \ref{tamenontame1} is the following. 
\begin{corollary}\label{biggroup1}
If $|\aut(\cX_r)| > 84(g-1)$, then case (d) of Theorem \ref{th:largeaut} holds.
\end{corollary}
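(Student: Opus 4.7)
The plan is to assume $|\aut(\cX_r)|>84(g-1)$ and eliminate cases (a), (b), (c) of Theorem \ref{th:largeaut}, leaving only (d).

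First I would combine Lemma \ref{lem:2g+1hyp}---applicable since $\cX_r$ is assumed non-hyperelliptic---with Remark \ref{remark:s3} to deduce that $|H|$ is odd and that $H/C_{(2g+1)}$ embeds faithfully into $S_3$. Thus $H/C_{(2g+1)}$ is either trivial or the cyclic group $C_3$, the latter sitting in $S_3$ as $A_3$ and therefore acting on $\Omega$ as a 3-cycle. Next, for each $P\in\Omega$, Proposition \ref{tamenontame1} gives $\aut(\cX_r)_P^{(1)}=1$, so $\aut(\cX_r)_P$ is cyclic of order prime to $p$, contains $C_{(2g+1)}$ normally, and hence lies in $H$; combined with the description of $H/C_{(2g+1)}$ one obtains $\aut(\cX_r)_P = C_{(2g+1)}$. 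A standard Sylow argument then shows the intersection of an $\aut(\cX_r)$-orbit with $\Omega$ is an $H$-orbit: if $g(P_i)=P_j$ with both in $\Omega$, then $gC_{(2g+1)}g^{-1}$ and $C_{(2g+1)}$ are two order-$(2g+1)$ subgroups of the cyclic group $\aut(\cX_r)_{P_j}$, forcing $g\in H$.

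If $H=C_{(2g+1)}$, then $P_0,P_1,P_\infty$ lie in three distinct $\aut(\cX_r)$-orbits, each of which is tame and short. Since none of (a)--(d) admits three tame short orbits, Hurwitz's bound would hold, against the hypothesis. If $H/C_{(2g+1)}\cong C_3$, its image in $S_3$ is the 3-cycle group, so $\Omega$ constitutes a single tame short orbit $O_1$ of size $|\aut(\cX_r)|/(2g+1)$; this rules out (b) and (c), which allow no tame short orbits.

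The hard part will be excluding case (a). Writing the two tame orbits as $O_1$ (from $\Omega$, with stabilizer order $e_1=2g+1$) and $O_2$ (with stabilizer order $e_2\geq 2$), and the non-tame orbit as $O_3$, I would observe that at any $P\in O_3$ one has $|G_P^{(1)}|\geq p\geq 2$, so $d_P\geq e_P$ and the total non-tame contribution to the different is at least $|\aut(\cX_r)|$. Combined with $d_i=|\aut(\cX_r)|-\ell_i$ for the two tame orbits, the Riemann--Hurwitz formula with $\bar g=0$ gives
\[
|\aut(\cX_r)|\Big(1-\tfrac{1}{2g+1}-\tfrac{1}{e_2}\Big)\leq 2g-2.
\]
Since $e_2\geq 2$, the left-hand factor is at least $\tfrac{2g-1}{2(2g+1)}$, forcing $|\aut(\cX_r)|\leq\tfrac{4(g-1)(2g+1)}{2g-1}$. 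A direct check shows this is strictly less than $84(g-1)$ for every $g\geq 1$ (it reduces to $22<40g$), contradicting the hypothesis. Only case (d) survives.
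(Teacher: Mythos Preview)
Your proof is correct and follows the same route as the paper, which simply declares the corollary an ``immediate consequence of Proposition~\ref{tamenontame1}''. Your argument supplies precisely the details behind that word ``immediate'': the existence of a tame short orbit through $\Omega$ disposes of (b) and (c), and your Riemann--Hurwitz estimate $|\aut(\cX_r)|\le 4(g-1)(2g+1)/(2g-1)$ in case~(a) is the natural way to finish off that remaining case, which the paper leaves implicit.
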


\begin{lemma}\label{lematecnico1}
Let $\cX_r$ be  a non-hyperelliptic $(2g+1)$-curve of genus $g>2$. Suppose that $|\aut(\cX_r)| > 84(g-1)$. Then the following hold:
\begin{itemize}
\item[(i)] There is a cyclic automorphism $\tau \in \aut(\cX_r)$ of order $3$ normalizing $\alpha$ and acting transitively on $\Omega=\{P_0,P_1,P_\infty\}$.
\item[(ii)] $\aut(\cX_r)_{P_i}=C_{2g+1}$ for all $i=0,1,\infty$.
\item[(iii)] $|\aut(\cX_r)|=3(2g+1)(d(2g+1)+1)$ for some positive integer $d$.
\end{itemize}
\end{lemma}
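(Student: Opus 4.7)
My plan is to exploit three ingredients already available in this subsection: the cyclicity of the stabilizers at points of $\Omega$ (Proposition \ref{tamenontame1}), the oddness of $|H|$ coming from the non-hyperelliptic hypothesis (Lemma \ref{lem:2g+1hyp}), and the severe restriction on the number of short orbits forced by being above the Hurwitz bound (Corollary \ref{biggroup1}). I shall establish the three claims in turn; the hardest will be (i).

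For (i), the faithful action of $H/C_{(2g+1)}$ on $\Omega$ described in Remark \ref{remark:s3}, combined with Lemma \ref{lem:2g+1hyp}, immediately yields $|H/C_{(2g+1)}|\in\{1,3\}$. The crux is to rule out $H=C_{(2g+1)}$, which I will do by a short-orbit count. Proposition \ref{tamenontame1} makes each $\aut(\cX_r)_{P_i}$ tame and hence cyclic; since $C_{(2g+1)}$ is the unique subgroup of order $2g+1$ in any cyclic overgroup (i.e.\ characteristic), every such stabilizer normalizes $C_{(2g+1)}$ and so lies in $H$. Under the assumption $H=C_{(2g+1)}$ this forces $\aut(\cX_r)_{P_i}=C_{(2g+1)}$ for each $i$. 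But then any element of $\aut(\cX_r)$ sending $P_i$ to $P_j$ would conjugate $\alpha$ into $\aut(\cX_r)_{P_j}=C_{(2g+1)}$, hence lie in $N_{\aut(\cX_r)}(C_{(2g+1)})=H=C_{(2g+1)}$ and therefore fix $P_i$; so $P_0,P_1,P_\infty$ would sit in three pairwise distinct short $\aut(\cX_r)$-orbits, contradicting Corollary \ref{biggroup1}. Hence $|H/C_{(2g+1)}|=3$. Since $\gcd(3,2g+1)=1$, Sylow's theorem produces a subgroup $\langle\tau\rangle\leq H$ of order $3$; and as the image of $H/C_{(2g+1)}$ in $S_3$ has order $3$, it is generated by a $3$-cycle, so $\tau$ acts transitively on $\Omega$.

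Claim (ii) will then be essentially automatic. The cyclic/characteristic argument of the previous paragraph already gives $\aut(\cX_r)_{P_i}\leq H$; and inside $H$ only elements of $C_{(2g+1)}$ fix $P_i$, because the other two cosets $\tau C_{(2g+1)}$ and $\tau^2 C_{(2g+1)}$ act on $\Omega$ as the non-trivial powers of a $3$-cycle by (i). Hence $\aut(\cX_r)_{P_i}=C_{(2g+1)}$. For (iii), from $|H|=3(2g+1)$ and $H=N_{\aut(\cX_r)}(C_{(2g+1)})$, Sylow's theorem yields $n_{2g+1}=[\aut(\cX_r):H]$ together with the congruence $n_{2g+1}\equiv 1\pmod{2g+1}$, whence $|\aut(\cX_r)|=3(2g+1)\bigl(d(2g+1)+1\bigr)$ for some integer $d\geq 0$. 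The elementary inequality $84(g-1)>3(2g+1)$ valid for $g\geq 2$, combined with the standing hypothesis $|\aut(\cX_r)|>84(g-1)$, excludes $d=0$.

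The main obstacle is the elimination of $H=C_{(2g+1)}$ inside (i). It rests on the delicate interplay between Proposition \ref{tamenontame1} (cyclicity of $\aut(\cX_r)_{P_i}$), the characteristic property of $C_{(2g+1)}$ in any cyclic overgroup, and Corollary \ref{biggroup1} (at most two short orbits); the self-normalizing hypothesis has to be translated into a rigid three-orbit phenomenon on $\Omega$ before the short-orbit restriction can be invoked. Once (i) is settled, (ii) and (iii) reduce to purely group-theoretic bookkeeping.
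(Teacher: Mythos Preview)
Your proof is correct and follows essentially the same strategy as the paper's. The only differences are organizational: for (i) the paper places $\Omega$ directly inside the unique tame short orbit $\Lambda_1$ (via Corollary~\ref{biggroup1} and Proposition~\ref{tamenontame1}) and then picks $\tau$ sending $P_0$ to $P_1$, whereas you reach the same conclusion by a contrapositive (self-normalizing $\Rightarrow$ three short orbits); and for (iii) the paper counts $|\Lambda_1|$ via orbit--stabilizer while you use Sylow's congruence $n_{2g+1}\equiv 1\pmod{2g+1}$, which is the same computation in disguise.
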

\begin{proof}
If $|\aut(\cX_r)| > 84(g-1)$, then by Corollary \ref{biggroup1}, $\aut(\cX_r)$ has two short orbits on $\cX_r$: one tame, say $\Lambda_1$, and one non-tame, say $\Lambda_2$. Moreover, Proposition \ref{tamenontame1} implies $\Omega=\{P_0,P_1,P_\infty\} \subset \Lambda_1$. In particular, there  exists $\tau \in \aut(\cX_r)$ such that $\tau(P_0)=P_1$. Thus $\tau^{-1}\alpha \tau(P_0)=\tau^{-1}(P_1)=P_0$. Hence $\tau^{-1} \alpha \tau \in \aut(\cX_r)_{P_0}$, and then $\tau^{-1} \alpha \tau=\alpha$. Thus the order of $\tau$ equals $3$.  This proves (i). 

To prove (ii), assume that $|\aut(\cX_r)_{P_i}| > 2g+1$. By Proposition \ref{tamenontame1}, $\aut(\cX_r)_{P_i}$ is tame, whence cyclic. By \cite[Theorem 11.79]{hirschfeld-korchmaros-torres2008} we have $|\aut(\cX_r)_{P_i}|=2(2g+1)$, and $\cX_r$  is hyperelliptic by Lemma \ref{lem:2g+1hyp}, a contradiction. 

Finally, since $\Omega \subset \Lambda_1$, then (i) and (ii) imply that $3$ divides $|\Lambda_1|$ and $|\Lambda_1|\equiv 3 \mod 2g+1$. Thus,  $|\Lambda_1|=3d(2g+1)+3$ for some positive integer $d$. This combined with the Orbit-Stabilizer Theorem finishes the proof.

\end{proof}


Hence, by Lemma \ref{lematecnico1},  we may  assume $\cX_r$ non-hyperelliptic with   $r$ such that $r^2+r+1 \equiv 0 \mod (2g+1)$. Let $H = N_{\aut(\cX_r)}(C_{(2g+1)})$; then $H$ is a non-abelian group of order $3(2g+1)$. Note that this is possible only if $3 \mid g$ (as $3$ must divide $2g$). 

A key consequence of Lemma \ref{key} is that the intersection $\bar{N}$ of all non-trivial normal subgroups of $\aut(\cX)$ is non-trivial as it must contain $C_{(2g+1)}$; further, $\bar{N}$ must be bigger than $C_{(2g+1)}$ as $C_{(2g+1)}$ is not normal in $\aut(\cX_r)$ under our hypotheses.  Note that $\bar{N}$ is a minimal normal subgroup of $\aut(\cX_r)$ and, as such, it is characteristically simple (see \cite[p. 87]{robinson}). Then $\bar{N} = S^{(n)}$ for a finite simple group $S$ (\cite[p. 88]{robinson}). Actually, $n = 1$ by Lemma \ref{sylow}, whence $\bar{N} = S$. 

\begin{lemma}\label{minimalnormal}
 $|\bar{N}| > 84(g-1)$. 
\end{lemma}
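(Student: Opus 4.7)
The plan is to argue by contradiction: suppose $|\bar{N}| \leq 84(g-1)$. The paragraph preceding the statement already shows that $\bar{N}$ is a non-abelian finite simple group. It is simple by characteristic simplicity combined with Lemma \ref{sylow}, and it must be non-abelian: if $\bar{N}$ were abelian, being simple it would be cyclic of prime order, and the prime would have to be $2g+1$ (since $\bar{N}$ contains $C_{(2g+1)}$), whence $\bar{N}=C_{(2g+1)}$, contradicting the standing assumption that $C_{(2g+1)}$ is not normal in $\aut(\cX_r)$. In particular, $C_{(2g+1)}$ sits as a non-trivial abelian subgroup inside the non-abelian finite simple group $\bar{N}$.

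The heart of the argument will be Vdovin's theorem \cite{vdovin} bounding abelian subgroups of finite simple groups: for any non-abelian finite simple group $S$ and any abelian subgroup $A\leq S$, one has $|A|^{3}\leq |S|$, with a short explicit list of exceptional groups, all of small order. I would apply this with $A=C_{(2g+1)}$ and $S=\bar{N}$ to get $(2g+1)^{3}\leq |\bar{N}|$, and then combine with the contradiction assumption to obtain $(2g+1)^{3}\leq 84(g-1)$. An elementary check shows that $(2g+1)^{3}>84(g-1)$ for every $g\geq 2$ (already at $g=2$ the left-hand side equals $125$, exceeding $84$, and the cubic clearly dominates thereafter). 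Since $g>3$ by hypothesis, this is a contradiction.

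The main obstacle is to handle the small exceptional cases in Vdovin's theorem. As these exceptions involve only finitely many simple groups of small order, they can intervene only for small $g$. In each such case I would either verify $|\bar{N}|>84(g-1)$ by direct inspection of the order of the candidate simple group (keeping in mind that $\bar{N}$ must contain an element of the large prime order $2g+1$, which heavily restricts the admissible simple groups via their prime spectra), or else exploit the factorization $|\aut(\cX_r)|=3(2g+1)(d(2g+1)+1)$ from Lemma \ref{lematecnico1}. Since $\bar{N}$ is normal in $\aut(\cX_r)$ and contains one Sylow $(2g+1)$-subgroup, it contains all of them, so $|\bar{N}|$ is divisible by $(2g+1)(d(2g+1)+1)\geq (2g+1)(2g+2)$, and this quadratic lower bound on $|\bar{N}|$ already suffices for all but the smallest values of $g$, which can then be checked directly.
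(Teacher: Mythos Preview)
Your route via Vdovin's theorem is genuinely different from the paper's. The paper argues by elementary Sylow counting: writing $n_{2g+1}$ for the number of Sylow $(2g+1)$-subgroups of $\bar{N}$, one has $n_{2g+1}\equiv 1\pmod{2g+1}$ and $n_{2g+1}>1$, and also $4\mid|\bar{N}|$ since $\bar{N}$ is non-abelian simple. A short case split on $n_{2g+1}\in\{2g+2,\,4g+3,\,\geq 6g+4\}$, together with $(2g+1)\mid|\bar{N}|$ and pairwise-coprimality considerations, gives $|\bar{N}|>84(g-1)$ for $g>7$, and the residual genera $g=5,6$ are disposed of by checking that $264$ and $364$ are not orders of simple groups. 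No appeal to the classification, or to Vdovin, is needed.

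Your proposal, however, has a genuine gap in its description of Vdovin's theorem. The exceptions to the inequality $|A|^{3}<|S|$ are \emph{not} ``finitely many simple groups of small order'': the exception is the entire infinite family $\PSL(2,q)$, $q>3$ (this is exactly how the paper cites \cite{vdovin} in the proof of Theorem~\ref{quasela}). Consequently your plan to treat the exceptions as something that ``can intervene only for small $g$'' fails as written. Your fallback Sylow argument is correct and gives $|\bar{N}|\geq(2g+1)(2g+2)$, but this exceeds $84(g-1)$ only for $g\geq 19$, so ``check the smallest values directly'' would mean handling all $g\leq 18$, which is far more than you suggest. The gap is repairable: if $\bar{N}\cong\PSL(2,q)$ contains an element of prime order $2g+1$, then $2g+1$ divides one of $q,\,q-1,\,q+1$, forcing $q\geq 2g$ and hence $|\bar{N}|\geq q(q^{2}-1)/2 > 84(g-1)$ for every $g\geq 4$ after a short computation. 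But you must actually carry out that analysis; the proposal as stated misidentifies the obstacle and does not cover the $\PSL(2,q)$ case.
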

\begin{proof} As $\bar{N}$ is simple, and $C_{(2g+1)}$ is s Sylow $(2g+1)$-subgroup of $\bar{N}$, we have $|\bar{N}| > (2g+1)(2g+2)$ as a group of order $q(q+1)$ for  a prime $q$, is non-simple. If  the  number  $n_{2g+1}$ of Sylow $(2g+1)$-subgroups is such that $n_{2g+1} \geq (6g+4)$, then the claim holds for $g > 4$, and we are done. If $n_{2g+1} = 4g+3$,then $$
|\bar{N}| \geq 4(2g+1)(4g+3) > 84(g-1)
$$ 

for every $g$, since $4$ must divide $|\bar{N}|$ ($\bar{N}$ simple) and $4g+3 \equiv 3 \mod 4$. Next, assume $n_{2g+1} = 2g+2$; then 
$$
|\bar{N}| \geq 2(2g+1)(2g+2) > 84(g-1)
$$
whenever $g > 7$.  We then deal with the cases $g =5, 6$ separately. If $g =5$, we must rule out the possibility that $|\bar{N}| = 264$, which is immediately disposed of, as there is no simple group of such order. If $g =6$, we must rule out the possibility that $|\bar{N}| = 364$, for which the same argument as in the former case applies.

\end{proof}

\begin{corollary}
If $|\aut(\cX_r)| > 84(g-1)$, then $\aut(\cX_r)$ is a finite simple group. 
\end{corollary}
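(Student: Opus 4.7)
The plan is to prove that the simple normal subgroup $\bar{N}$ constructed above coincides with the whole group $\aut(\cX_r)$, by combining Frattini's argument with Burnside's normal $p$-complement theorem to rule out a proper containment.

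First I would record that $C_{(2g+1)}$ is a Sylow $(2g+1)$-subgroup of $\bar{N}$: indeed $C_{(2g+1)} \subseteq \bar{N}$, and by Lemma \ref{sylow} it is Sylow in $\aut(\cX_r)$, hence also in $\bar{N}$. Since $\bar{N}$ is normal in $\aut(\cX_r)$, Frattini's argument yields
$$
\aut(\cX_r) = \bar{N} \cdot N_{\aut(\cX_r)}(C_{(2g+1)}) = \bar{N} \cdot H.
$$
Therefore $[\aut(\cX_r) : \bar{N}] = [H : \bar{N} \cap H]$. Recall from the discussion just after Lemma \ref{lematecnico1} that $H$ is a non-abelian group of order $3(2g+1)$. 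Since $2g+1$ is prime and $C_{(2g+1)} \subseteq \bar{N} \cap H$ is an index-$3$ subgroup of $H$, the only subgroups of $H$ containing $C_{(2g+1)}$ are $C_{(2g+1)}$ and $H$ itself; hence $\bar{N} \cap H \in \{C_{(2g+1)}, H\}$.

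If $\bar{N} \cap H = H$ then $\aut(\cX_r) = \bar{N}H = \bar{N}$ and the assertion is proved, since $\bar{N}$ is simple. So the remaining task is to exclude the alternative $\bar{N} \cap H = C_{(2g+1)}$. In that case $N_{\bar{N}}(C_{(2g+1)}) = \bar{N} \cap H = C_{(2g+1)}$, so the Sylow $(2g+1)$-subgroup of $\bar{N}$ is self-normalizing; being of prime order it is cyclic and hence abelian, so $C_{(2g+1)} \subseteq Z(N_{\bar{N}}(C_{(2g+1)}))$. Burnside's normal $p$-complement theorem then produces a normal subgroup of $\bar{N}$ of index $2g+1$, which is non-trivial and proper because $|\bar{N}| > (2g+1)(2g+2)$ by Lemma \ref{minimalnormal}. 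This contradicts the simplicity of $\bar{N}$.

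The main obstacle is essentially bookkeeping: identifying the precise subgroup $H$ of order $3(2g+1)$ and isolating the subgroups containing $C_{(2g+1)}$, and noting that self-normalization of the cyclic Sylow subgroup automatically supplies the centrality hypothesis of Burnside's theorem. Once these observations are in place, the group-theoretic machinery (Frattini plus Burnside) runs without further computation.
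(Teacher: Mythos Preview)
Your proof is correct and takes a genuinely different route from the paper's. Both arguments use Frattini's argument to obtain $\aut(\cX_r) = \bar{N}\cdot H$, reducing the question to whether $H \leq \bar{N}$. The paper establishes $H \leq \bar{N}$ directly by reapplying Lemma \ref{lematecnico1} to $\bar{N}$: since $|\bar{N}| > 84(g-1)$ by Lemma \ref{minimalnormal}, there is an element of order $3$ in $\bar{N}$ normalizing $C_{(2g+1)}$, and together with $C_{(2g+1)}$ this generates all of $H$ inside $\bar{N}$. You instead rule out the alternative $\bar{N}\cap H = C_{(2g+1)}$ by noting that it would make the cyclic Sylow $(2g+1)$-subgroup of $\bar{N}$ self-normalizing, so Burnside's transfer theorem produces a proper non-trivial normal complement, contradicting the simplicity of $\bar{N}$. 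Your approach is purely group-theoretic and avoids the implicit step of checking that the earlier geometric lemmas apply verbatim to subgroups of $\aut(\cX_r)$; the paper's approach is more elementary in that it avoids transfer. One minor point: the inequality $|\bar{N}| > (2g+1)(2g+2)$ you invoke is in the \emph{proof} of Lemma \ref{minimalnormal}, not its statement; for your purposes it is enough that $\bar{N}$ is non-abelian simple (hence of order strictly larger than $2g+1$), which is already recorded in the paragraph preceding Lemma \ref{minimalnormal}.
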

\begin{proof} 
From our previous results, it follows that if $\aut(\cX_r)$ is non-simple, then it contains a simple minimal normal subgroup $\bar{N}$. Also,  a consequence of Lemma \ref{minimalnormal} is that the whole $H$ is a subgroup of $\bar{N}$. But then $\bar{N}$ contains the normalizer of a non-trivial Sylow subgroup of $\aut(\cX_r)$, and as a consequence of the Sylow Theorem, $\bar{N}$ is self-normalizing. In particular, it cannot be a non-trivial normal subgroup of $\aut(\cX_r)$, a contradiction. 

\end{proof}

\begin{theorem}\label{quasela}
If $|\aut(\cX_r)| > 84(g-1)$, then $|\aut(\cX_r)|  > 8g^3$. In this case, $\aut(\cX) \cong \PSU(3,p^h)$, and $\cX_r$ is isomorphic to a Hermitian curve. 
\end{theorem}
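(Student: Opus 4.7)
The plan is to combine the simplicity of $\aut(\cX_r)$ established in the preceding corollary with Vdovin's bound on abelian subgroups of finite simple groups, so as to land inside the hypothesis of Henn's classification (Theorem~\ref{henn}), and then eliminate the non-Hermitian cases by direct inspection.

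I would first observe that, since $\aut(\cX_r)$ is a non-abelian finite simple group containing the cyclic abelian subgroup $C_{(2g+1)}$, Vdovin's theorem \cite{vdovin} yields $|C_{(2g+1)}|^3 \le |\aut(\cX_r)|$, so
$$|\aut(\cX_r)| \ge (2g+1)^3 = 8g^3 + 12g^2 + 6g + 1 > 8g^3.$$
The sporadic exceptions to Vdovin's inequality (certain $\PSL(2,q)$ for small $q$, a few alternating groups, and a short list of sporadics) all have order far below $84(g-1)$ once $g > 3$, and any surviving exception would force $2g+1$ to be one of a handful of small primes; since by Lemma~\ref{sylow} $C_{(2g+1)}$ is itself a Sylow subgroup, this case-by-case dismissal is routine under our standing assumption $g>3$.

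Next I would invoke Theorem~\ref{henn}. Cases (I) and (II) there produce hyperelliptic curves, contradicting our standing non-hyperellipticity assumption on $\cX_r$. In case (IV) one has $\aut(\cX_r) \cong \Sz(q)$ with $q = 2^{h+1}$, $g = (q/2)(q-1)$, and hence $2g+1 = q^2 - q + 1$; but the elementary identities
$$\gcd(q^2-q+1,\, q) = \gcd(q^2-q+1,\, q-1) = \gcd(q^2-q+1,\, q^2+1) = 1$$
show that $q^2 - q + 1$ is coprime to $|\Sz(q)| = q^2(q-1)(q^2+1)$, so no subgroup of order $2g+1$ can exist. Case (IV) is therefore excluded.

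Only case (III) survives, giving that $\cX_r$ is birationally equivalent to the Hermitian curve of genus $p^h(p^h-1)/2$ and that $\aut(\cX_r)$ is either $\PSU(3,p^h)$ or $\PGU(3,p^h)$. To finish, I would recall that $\PSU(3,p^h)$ sits in $\PGU(3,p^h)$ as a normal subgroup of index $\gcd(3,p^h+1)$, so the simplicity of $\aut(\cX_r)$ forces $\aut(\cX_r) \cong \PSU(3,p^h)$ (the two groups coinciding exactly when $\gcd(3,p^h+1) = 1$). The main technical obstacle I anticipate is keeping track of Vdovin's small-group exceptions, but they are harmless here because any such exceptional simple group would either have order too small to satisfy $|\aut(\cX_r)| > 84(g-1)$ in the range $g>3$ or would fail to contain an element of order $2g+1$.
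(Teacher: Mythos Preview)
Your overall strategy is right, and the Suzuki elimination via the gcd computation is a clean alternative to the paper's observation that $3 \nmid |\Sz(q)|$. The route to $\PSU$ rather than $\PGU$ by invoking the already-established simplicity of $\aut(\cX_r)$ is also fine.

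There is, however, a genuine gap in your handling of Vdovin's theorem. The exception in \cite[Theorem~A]{vdovin} is not a finite list of small groups: it is the \emph{entire infinite family} $\PSL(2,q)$. Indeed, the elementary abelian Sylow subgroup of $\PSL(2,q)$ has order $q$, while $|\PSL(2,q)| = q(q^2-1)/\gcd(2,q-1) < q^3$ for every $q$. So you cannot dismiss this case by saying the exceptional groups are ``too small'' once $g>3$; for any prescribed $g$ there are values of $q$ with $|\PSL(2,q)| > 84(g-1)$ and $2g+1$ dividing $|\PSL(2,q)|$. A separate argument is required.

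The paper supplies one: if $\aut(\cX_r) \cong \PSL(2,q)$ then, from the subgroup list of $\PSL(2,q)$, either $q = 2g+1$ or $2g+1 \mid q \pm 1$. In the first case the normalizer of $C_{(2g+1)}$ would contain a cyclic group of order $(q-1)/2 = g$, contradicting Lemma~\ref{lematecnico1}(i) for $g>3$. In the second case $C_{(2g+1)}$ sits inside a cyclic torus whose normalizer is dihedral, again contradicting Lemma~\ref{lematecnico1}(i), since the order-$3$ element $\tau$ there does not commute with $C_{(2g+1)}$. You need to insert an argument of this kind before you can invoke Henn's classification.
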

\begin{proof} 
By \cite[Theorem A]{vdovin}, if $G \not \cong \PSL(2,q)$ is a non-abelian finite simple group  and $A\leq G$ is abelian, then $|A|^3 < |G|$. Let $\aut(\cX_r) \not \cong \PSL(2,q) $.  We then have $|\aut(\cX_r)| > (2g+1)^3 > 8g^3$.  Since $\cX_r$ is not hyperelliptic, then by Theorem \ref{henn} either $p=2$, $\cX_{r}$ is isomorphic to a Suzuki curve, and $\aut(\cX_r)$ is isomorphic to a Suzuki group $Sz(2^{2h+1})$, for an integer $h >0$, or $\cX_{r}$ is isomorphic to a Hermitian curve and $\aut(\cX_r) \cong \PGU(3, p^k)$ for an integer $k > 0$. We can exclude the former possibility as $3 \nmid |Sz(2^{2h+1})|$.  Assume that the latter holds. As $\cX_r$ is a Hermitian curve, we have $2g+1 = q^2-q+1$ for a prime power $q \neq 2g+1$. Further, as $3 \mid g$, and $p\neq 3$, we have $q \equiv 1 \mod 3$. In particular, $d =1$ and $\aut(\cX_r) \cong \PSU(3,q)$ is simple. 

We need to exclude the possibility that $\aut(\cX_r) \cong \PSL(2,q)$. The subgroups of $\PSL(2,q)$ are known, see for instance \cite[Theorem A.8]{hirschfeld-korchmaros-torres2008}. Looking at this list, we see that either $q = 2g+1$ or $2g+1 \mid q\pm1$. If the former holds, there should  exist a cyclic group of order $g/2$ normalizing  $C_{(2g+1)}$, a contradiction to Lemma \ref{lematecnico1} (i) as $g > 3$. If the latter holds, the normalizer of $C_{(2g+1)}$ is a dihedral group, and again a contradiction is given by Lemma \ref{lematecnico1} (i) since the automorphism $\tau$ of order $3$ does not commute with  $C_{(2g+1)}$. 

\end{proof}

{\bf Proof of Theorem 3.5}. First, let $|\aut(\cX_r)| \leq 84(g-1)$. By Lemmas \ref{key} and \ref{minimalnormal}, $C_{(2g+1)}$ is a minimal normal subgroup of $\aut(\cX_r)$.  Then, $|\aut(\cX_r)| = c(2g+1)$, for $ c \in \{1,2,3\}$ by Remark \ref{remark:s3}. 

Next, assume that $|\aut(\cX_r)| >  84(g-1)$. By Theorem \ref{quasela}, we only need to see whether a Hermitian curve can be a $(2g+1)$-curve or not.  Recall that a Hermitian curve $\cH$ over $\mathbb{F}_{q^2}$ has affine equation $X^{q+1} = Y^q+Y$, is non-singular, thence it has genus $g(\cH) = (q^2-q)/2$. If $2g+1 = q^2-q+1$ is a prime, we must then look for cyclic subgroups of $\PSU(3,q)$ of this order normalized by an element of order $3$. By looking at the list of (maximal) subgroups of $\PSU(3,q)$ (see \cite[Theorem A.10]{hirschfeld-korchmaros-torres2008}), we see that there is one such subgroup, namely the normalizer of a Singer subgroup of order $q^2-q+1$, and the result follows.

\qed

\section{On $(g+1)$-curves}\label{g+1}

In this section, we classify $(g+1)$-curves up to birational equivalence, characterize hyperelliptic  $(g+1)$-curves and determine their automorphism groups. Henceforth, in order to simplify our notation, we let $C_{(g+1)} = G$ and  $ G = \langle \alpha \rangle$. Also, we denote by $\rho(\alpha)$ the number of points on $\cX$ that are fixed by $\alpha$.

\subsection{Classification}
\begin{lemma}\label{lem:p = g+1}
Let $\cX$ be a $(g+1)$-curve with $g >2$, and  let $\bar{g}$ be the genus  of the quotient curve $\cX/\langle \alpha \rangle$.
Then one of the following holds. 
\begin{enumerate}
\item[{\rm (i)}] If $ g+1 \neq p$, then $\bar{g} = 0$, and $\rho(\alpha) = 4$;
\item[{\rm (ii)}] If $g+1 = p$, then $\bar{g} = 0$, and $\rho(\alpha) \in \{1,2\}$. 
\end{enumerate}
\end{lemma}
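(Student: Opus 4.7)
The proof will be a direct application of the Riemann--Hurwitz genus formula \eqref{rhg} to the Galois cover $\cX \to \cX/G$, combined with case analysis on the genus $\bar g$ and careful handling of the ramification filtration in the wild case.

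\medskip
\noindent\textbf{Tame case ($g+1\neq p$).} Here every nontrivial element of $G$ has order $g+1$ coprime to $p$, so for every fixed point $P$ of $\alpha$ one has $G_P=G_P^{(0)}=G$ and $G_P^{(1)}=\{1\}$. Hence $d_P=g$. Writing $r=\rho(\alpha)$, the formula \eqref{rhg} becomes
\[
2g-2=(g+1)(2\bar g-2)+r\,g.
\]
If $\bar g\geq 2$, the right-hand side is at least $2(g+1)>2g-2$, impossible. If $\bar g=1$, we get $r=(2g-2)/g$, which is non-integral for $g>2$. Therefore $\bar g=0$, and the equation collapses to $r\,g=4g$, i.e.\ $r=4$.

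\medskip
\noindent\textbf{Wild case ($g+1=p$).} Now $|G|=p$, so at any fixed point $P$ the stabilizer $G_P$ is a $p$-group, whence $G_P^{(1)}=G_P=G$. The ramification filtration then has a single break: there exists $j\geq 1$ with $G_P^{(i)}=G$ for $0\leq i\leq j$ and $G_P^{(i)}=\{1\}$ for $i>j$, so
\[
d_P=(j+1)(p-1)=(j+1)g\geq 2g.
\]
Writing $r=\rho(\alpha)$, the formula \eqref{rhg} gives
\[
2g-2=(g+1)(2\bar g-2)+\sum_{P\in \Fix(\alpha)} d_P.
\]
If $\bar g\geq 2$, the right-hand side is at least $2(g+1)$, impossible. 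If $\bar g=1$, the contribution from the ramified points is $2g-2$, but each fixed point contributes at least $2g$; if $r\geq 1$ this exceeds $2g-2$, while $r=0$ forces $2g-2=0$, contradicting $g>2$. Hence $\bar g=0$, and we obtain
\[
\sum_{P\in \Fix(\alpha)} d_P=4g.
\]
Since each $d_P\geq 2g$, we must have $r\leq 2$; the equation also rules out $r=0$ (the right side would be $0$). Thus $r\in\{1,2\}$, as claimed.

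\medskip
The main issue to be careful about is the wild case: one has to invoke the structure of the higher ramification groups for a cyclic $p$-group action to derive the lower bound $d_P\geq 2g$, without which one cannot rule out $\bar g=1$ or a larger value of $\rho(\alpha)$. Once this bound is in hand, the case analysis on $\bar g$ is straightforward arithmetic.
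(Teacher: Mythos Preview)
Your proof is correct and follows essentially the same approach as the paper's: apply the Riemann--Hurwitz formula, exploit that every local different is a multiple of $g$, and eliminate $\bar g\geq 1$ by easy arithmetic. The paper unifies the tame and wild cases by writing the total different as $sg$ for a single non-negative integer $s$ and then ruling out $\bar g=1$ via $g=2/(2-s)$, whereas you treat the two cases separately; this is only a cosmetic difference.
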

\begin{proof}
As $g+1$ is prime, regardless of which case holds ($ p \neq g+1$ or $p = g+1 $), the Riemann-Hurwitz genus formula applied to the covering $\cX \rightarrow \cX/\langle\alpha\rangle$ reads 
\begin{equation}\label{eq:g+1}
2(g-1) = 2(g+1)(\bar{g}-1)+ sg. 
\end{equation}
for a non-negative integer $s$. From direct inspection, we see that $\bar{g} \leq 1$. Assume that $\bar{g} =1$; then Equation \eqref{eq:g+1} yields
$$
g = \frac{2}{2-s},
$$
a contradiction. Hence, $\bar{g} = 0$. 

If $g+1\neq p$, then $s = \rho(\alpha)$. In this case, Equation \eqref{eq:g+1} reads 
\begin{equation}\label{eq:4}
4g = sg,
\end{equation}
whence $s =4$, and item (i)  follows.

 If $g+1 = p$, Equality \eqref{eq:4} still holds true. However,  higher ramification groups at each point that is fixed by $\alpha$ have to be taken into account. Let $P$ be one of such points.  As $G_{P}^{(0)} = G_{P}^{(1)} = G$, we have that the different exponent $d_P$ of $P$ (see \cite[Theorem 11.70]{hirschfeld-korchmaros-torres2008}) is such that $d_P \geq 2g$, whence either $P$ is the only point that is fixed by $\alpha$, and $G_{P}^{(0)} = G_{P}^{(1)} = G_{P}^{(2)} = G_{P}^{(3)} = G$ while $G_{P}^{(4)}$ is trivial, or there exists a further fixed point $Q$ and $G_{P}^{(0)} = G_{P}^{(1)} = G_{Q}^{(0)} = G_{Q}^{(1)} = G$ with $G_{P}^{(2)},G_{Q}^{(2)}$ both trivial.  
 Hence, item (ii) follows. 
\end{proof}

\begin{theorem}\label{thm:p = g+1}
Let $\cX$ be a $(g+1)$-curve with $g >2$. Then one of the following holds.
\begin{enumerate}
\item[{\rm (a)}]
If $\cX$ is tame, then $\cX$ is  birationally equivalent to one of the following plane curves: $$\cX_{r,s,t,a}: Y^{g+1} = X^r(X-1)^s(X-a)^t,$$ where $a \in K\setminus\{0,1\}$ and  $r,s,t <g+1$, with $r+s+t \not\equiv 0\mod (g+1)$.  
\item[{\rm (b)}] 
If $\cX$ is wild, then  $\cX$ is  birationally equivalent either to a (hyperelliptic) curve 
$$
\cY_{a,b,c}: Y^{g+1}-Y = \frac{aX^2+bX+c}{X(X-1)}, 
$$
with $(a,b,c) \neq (0,0,0)$ and $\gcd(aX^2+bX+c, X(X-1)) = 1$, or to a curve 
$$ \cZ_{d,e,\ell}: Y^{g+1}-Y = X^3+dX^2+eX+\ell,$$
for $d,e,\ell \in K$. 
\end{enumerate}
\end{theorem}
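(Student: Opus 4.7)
The plan is to exploit the structure of the cover $\cX \to \cX/\langle\alpha\rangle$ provided by Lemma~\ref{lem:p = g+1} and to analyze the two cases $g+1 \neq p$ (tame) and $g+1 = p$ (wild) via Kummer theory and Artin--Schreier theory, respectively. In both cases, Lemma~\ref{lem:p = g+1} gives $\bar g = 0$, so $K(\cX)^{\langle\alpha\rangle} = K(x)$ for some $x \in K(\cX)$, and $K(\cX)/K(x)$ is a cyclic Galois extension of degree $g+1$ whose ramified places are in bijection with the fixed points of $\alpha$.

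For part (a), I will write $K(\cX) = K(x)(y)$ with $y^{g+1} = f(x)$ via Kummer theory, and after reducing exponents modulo $g+1$ take $f(x) = c \prod_i (x - \beta_i)^{e_i}$ with $1 \leq e_i \leq g$. The ramified places of $K(x)$ correspond to the $\beta_i$, together with the place at infinity precisely when $\sum_i e_i \not\equiv 0 \pmod{g+1}$. Lemma~\ref{lem:p = g+1}(i) forces the total number of such places to be four. Sending three of them to $\{0, 1, \infty\}$ by a M\"obius transformation and absorbing the leading constant $c$ via the rescaling $y \mapsto c^{1/(g+1)} y$ (valid over an algebraically closed field) yields the normal form $y^{g+1} = x^r (x-1)^s (x-a)^t$ with $a \in K \setminus \{0,1\}$; the hypothesis that infinity is the fourth ramified place is then equivalent to $r + s + t \not\equiv 0 \pmod{g+1}$.

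For part (b), $K(\cX)/K(x)$ is an Artin--Schreier extension: $K(\cX) = K(x)(y)$ with $y^p - y = f(x)$ for some $f \in K(x)$. I will normalize $f$ modulo the equivalence $f \sim f + u^p - u$ for $u \in K(x)$ so that every pole of $f$ has order coprime to $p$; standard Artin--Schreier theory then identifies each such pole of order $m_P$ with a ramified place where $G_P^{(i)} = G$ for $0 \leq i \leq m_P$ and $G_P^{(i)} = \{1\}$ otherwise, yielding different exponent $d_P = (m_P + 1)(p-1)$. Comparing with the ramification filtration dictated by Lemma~\ref{lem:p = g+1}(ii) splits into two sub-cases. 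Either $f$ has a unique pole of order $3$: after sending it to $\infty$ and applying an affine change $x \mapsto \lambda x + \mu$ to normalize the leading coefficient to $1$, I obtain $\cZ_{d,e,\ell}$. Or $f$ has two simple poles: after sending them to $0$ and $1$ and reducing the polynomial part to a constant (which is then absorbed into $y$ by solving $\gamma^p - \gamma = \mathrm{const}$ in $K$), I combine the partial-fraction decomposition into the form $\tfrac{aX^2 + bX + c}{X(X-1)}$, yielding $\cY_{a,b,c}$; the coprimality condition $\gcd(aX^2 + bX + c, X(X-1)) = 1$ records the nonvanishing of the two residues.

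I expect the wild case to be the main source of technical friction. One has to verify carefully that the Artin--Schreier normalization produces \emph{exactly} the pole configuration predicted by Lemma~\ref{lem:p = g+1}(ii), with no extra ramification appearing away from the fixed points of $\alpha$, and in particular that the polynomial part at infinity can always be reduced to a constant in the two-pole sub-case (by iteratively absorbing terms of degree divisible by $p$). The tame case, by contrast, reduces to bookkeeping on exponents modulo $g+1$ followed by a M\"obius and scaling normalization.
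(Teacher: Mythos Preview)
Your proposal is correct and follows essentially the same approach as the paper's proof: both use Lemma~\ref{lem:p = g+1} to identify the quotient as $\mathbb{P}^1$ and the ramification structure, then apply Kummer theory (citing \cite[Proposition~3.7.3]{stbook}) in the tame case and Artin--Schreier theory (citing \cite[Proposition~3.7.8]{stbook}) in the wild case, followed by a M\"obius normalization of the branch points. The main difference is stylistic: where you spell out the Artin--Schreier reduction and the ramification-filtration bookkeeping in detail, the paper simply invokes Stichtenoth's standard-form proposition as a black box, which already guarantees that the normalized $f$ has poles exactly at the ramified places with the prescribed orders.
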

\begin{proof} (a). By Lemma \ref{lem:p = g+1}, the cover $\cX \rightarrow  \cX/G \cong \mathbb{P}^1$ is a Kummer cover totally ramifying at $4$ points. As $\aut(\mathbb{P}^1) \cong \PGL(2,K)$ is sharply $3$-transitive on the points of $\mathbb{P}^1$, we may assume that $3$ of such points are $\bar{P}_{\infty} = \infty, \bar{P}_{0} = (0:1) $ and $\bar{P}_1 = (1:1)$, the fourth being given by $\bar{P}_a = (a:1)$ for some $a \neq 0,1$. This means $K(\cX) = K(x,y)$ with $y^{g+1} = h(x)$, for a polynomial $h(x) \in K(x)$ that has its zeroes at $ \bar{P}_{0},  \bar{P}_{1}$ and  $\bar{P}_a$, and whose degree is coprime with $g+1$. 
The result is then a consequence of \cite[Proposition 3.7.3]{stbook}

(b). By Lemma \ref{lem:p = g+1}, the cover $\cX \rightarrow  \cX/G \cong \mathbb{P}^1$ is an Artin-Schreier cover  totally ramifying either at one point $P$ or at two points $P,Q$.  In the former case, the function field of $\cX$ can be written in standard form as $K(x,y)$ with $y^{g+1}-y = f(x)$ for a function in $K(x)$ with exactly one pole of order $3$, see \cite[Proposition 3.7.8]{stbook}, whence $f$ is a degree-3 polynomial, that can be assumed to be monic since $K$ is algebraically closed. Hence, 
$$
f(x) = x^3+dx^2+ex+\ell,
$$
for $d,e,\ell \in K$.  

In the latter case, by  \cite[Proposition 3.7.8]{stbook} we have $K(\cX) = K(x,y)$ with $y^{g+1}-y  = g(x)$ for a function $g(x) \in K(x)$ with exactly two simple poles. We may choose such poles to be $(0:1), (1:1)$; further, the numerator of $g$ is coprime with $x(x-1)$ and has degree less than or equal to $2$. Summing up these informations, we get   
$$
g(x) = \frac{ax^2+bx+c}{x(x-1)}, 
$$
with $(a,b,c) \neq (0,0,0)$ and $\gcd(ax^2+bx+c, x(x-1)) = 1$, and our claim follows. 
\end{proof}

\begin{rem}
The full list of $(g+1)$-curves for $g = 2$, as well as their automorphism groups, can be found in \cite{shaska}. 
\end{rem}

\subsection{Hyperelliptic $(g+1)$-curves}

Throughout this subsection, $\cX$ is a $(g+1)$-curve of genus $g > 2$. Our goal here is to characterize all hyperelliptic $(g+1)$-curves when $p \neq 2$. As we will see, the most difficult (and most interesting) case is when $\cX$ is tame. 

\begin{theorem}\label{thm:hypaut} The following hold.
\begin{itemize}
\item If $\cX$ is wild, then $\cX$ is hyperelliptic if, and only if, it is birationally equivalent to the curve $\cY_{a,b,c}$ as in Theorem \ref{thm:p = g+1}(b). 
\item If $\cX$ is tame and $p >2$, then $\cX$ is hyperelliptic if, and only if, it is birationally equivalent to the plane curve
$$
Y^2 = (X^{g+1}-a_o)(X^{g+1}-a_1),
$$
with $a_0,a_1 \in K^*$. 
\end{itemize}
\end{theorem}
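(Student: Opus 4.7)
The plan is to treat the wild and tame cases separately, establishing both implications in each. Throughout I use that, for $g \geq 2$, the hyperelliptic involution $\iota$ (when it exists) is central in $\aut(\cX)$ and hence commutes with $\alpha$.

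\emph{Wild case.} I first rule out $\cZ_{d,e,\ell}$. Since $\iota$ commutes with $\alpha$, it preserves both the set $F$ of fixed points of $\alpha$ on $\cX$ (with $|F| \leq 2$ by Lemma \ref{lem:p = g+1}(ii)) and the set $W$ of $2g+2=2p$ Weierstrass points of $\cX$. The $\alpha$-orbits on $W$ have size $1$ or $p$, and the fixed ones are contained in $F$; writing $|W| = k + mp$ with $k \leq 2$ forces $k = 0$. Hence no Weierstrass point lies in $F$, so $\iota$ fixes no point of $F$ individually. This is incompatible with $\cZ_{d,e,\ell}$ (whose $\alpha$ has a unique fixed point), so by Theorem \ref{thm:p = g+1}(b) $\cX$ is birationally equivalent to some $\cY_{a,b,c}$. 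For the converse I construct an explicit involution on $\cY_{a,b,c}$: expanding the right-hand side as $f(x) = A + B/x + C/(x-1)$ with $B = -c$, $C = a+b+c$ (both nonzero by coprimality), a partial-fraction calculation gives
$$
f(\phi(x)) - f(x) = -\tfrac{K}{s}\bigl(t + s/(x-1) + 1/x\bigr),\qquad \phi(x) = \tfrac{1-x}{tx+1},\ s = t+1,\ K = Bs + C;
$$
choosing $t = -(B+C)/B$ kills $K$, so $f(\phi(x)) = f(x)$ and $(x,y) \mapsto (\phi(x), y)$ is an involution of $\cY_{a,b,c}$. Its $2p$ fixed points on $\cX$ (entire fibres above the two fixed points of $\phi$ on $\mathbb{P}^1_x$), combined with Riemann--Hurwitz, yield a genus-zero quotient.

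\emph{Tame case, $p > 2$.} Starting from the model $y^{g+1} = x^r(x-1)^s(x-a)^t$ of Theorem \ref{thm:p = g+1}(a), $\iota$ descends to a nontrivial involution $\bar\iota$ of $\cX/\langle\alpha\rangle \cong \mathbb{P}^1_x$ permuting the four branch points $\{0,1,a,\infty\}$. The analogous orbit count (now $|F| = 4$, $|W| = 2(g+1)$, orbit sizes $1$ or $g+1$, and $g+1 \geq 5$) again rules out any $\alpha$-fixed Weierstrass point, so $\bar\iota$ has no fixed points in $\{0,1,a,\infty\}$ and acts as a product of two transpositions. Relabeling, I may assume the pairing is $(0\,1)(a\,\infty)$; matching the Kummer exponents under the lift then forces $r = s$ and $t = g+1-r$, and the lift is $\iota(x,y) = \bigl(a(x-1)/(x-a),\,a(a-1)y/(x-a)^2\bigr)$. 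The $\iota$-invariants $u := ax(x-1)/(x-a)$ and $Y := y/(x-a)$ generate $K(\cX)^{\langle\iota\rangle}$ and satisfy $Y^{g+1} = (u/a)^r$; since $\gcd(r,g+1)=1$, this subfield is rational, uniformised by $s$ with $u = s^{g+1}$. Now $W := y/(x^2-2ax+a)$ satisfies $\iota(W) = -W$ (check: $(x-x_1)(x-x_2) = x^2-2ax+a$ and $(a-x_1)(a-x_2) = -a(a-1)$ give $\iota(x^2-2ax+a) = -a(a-1)(x^2-2ax+a)/(x-a)^2$). Computing $W^2$ as an element of $K(s)$, the $2g+2$ ramification points of $\cX \to \cX/\iota$ lie above the two fixed points of $\bar\iota$ and split into two $\bar\alpha$-orbits of size $g+1$ on $\mathbb{P}^1_s$, yielding $W^2 = c(s^{g+1}-a_0)(s^{g+1}-a_1)$, which becomes the stated form after rescaling $W$. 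Conversely, $Y^2 = (X^{g+1}-a_0)(X^{g+1}-a_1)$ has genus $g$, is hyperelliptic via $Y \mapsto -Y$, and carries the tame order-$(g+1)$ automorphism $X \mapsto \zeta X$ with $\zeta$ a primitive $(g+1)$-th root of unity.

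I expect the main technical obstacle to be the tame forward direction, specifically translating the abstract pairing of branch points by $\bar\iota$ into (i) the Kummer exponent constraints $r = s$ and $t = g+1-r$ and (ii) the explicit rational parametrization $u = s^{g+1}$ of $\cX/\iota$ in which the two $\bar\alpha$-orbits of ramification points appear as the zero loci of $s^{g+1}-a_0$ and $s^{g+1}-a_1$. The hypothesis $p > 2$ is used in both orbit-size arguments via the classical count $|W| = 2g+2$ of fixed points of the hyperelliptic involution, which breaks down in characteristic $2$.
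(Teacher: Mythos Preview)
Your argument is correct and, in all three parts, takes a different route from the paper.

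For the wild converse the paper simply notes that, from the defining equation of $\cY_{a,b,c}$, the element $x$ satisfies a quadratic over $K(y)$, so $[K(\cX):K(y)]=2$; your explicit M\"obius involution $\phi$ is exactly the nontrivial $K(y)$-automorphism of $K(x,y)$, so you are reconstructing this degree-$2$ structure by hand rather than reading it off the equation. For the wild forward direction the paper argues via the Weierstrass semigroup at $P_\infty$ on $\cZ_{d,e,\ell}$ (the pole of $y$ gives $3\in H(P_\infty)$, and on a hyperelliptic curve with classical gap sequence this forces $2\in H(P_\infty)$ and hence $g=1$); your orbit count on the set of Weierstrass points is an independent and equally clean way to reach the contradiction. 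The largest divergence is in the tame case: the paper invokes Brandt's classification of cyclic automorphisms of hyperelliptic function fields, obtaining the model $u^2=\prod_j(t^{g+1}-a_j)$ directly and then pinning down $\nu=0$, $s=2$ by genus. You instead work explicitly from the Kummer model $y^{g+1}=x^r(x-1)^s(x-a)^t$: the centrality of $\iota$ forces the ``Kummer exponent'' $j=1$ in $\iota(y)=g(x)y^j$, which yields $r=s$ and $t=g+1-r$; then your invariants $u,Y$ give a rational $K(s)$ with $\bar\alpha(s)=\zeta' s$, and the branch locus of $\cX\to\cX/\langle\iota\rangle$ splits into two $\bar\alpha$-orbits of size $g+1$ away from $s=0,\infty$, whence (after replacing $W$ by $W\cdot h(s)$ to make $W^2$ squarefree) the model $W^2=(s^{g+1}-a_0)(s^{g+1}-a_1)$. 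Your approach is longer but entirely self-contained, whereas the paper's is short but rests on an external reference; both are valid, and in fact your exponent-matching step reproduces (by a slightly different route) what the paper later proves separately as Proposition~\ref{carattip}.
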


\begin{proof}
If $\cX$ is wild, our result is a straightforward corollary of Theorem \ref{thm:p = g+1}(b), as it is clear from its equation that the curve $\cY_{a,b,c}$ is hyperelliptic. It is then enough to check that curves 
$$ \cZ_{d,e,\ell}: Y^{g+1}-Y = X^3+dX^2+eX+\ell$$

are not hyperelliptic. By contradiction, let  $\cZ_{d,e,\ell}$ be hyperelliptic. Let $K(\cZ_{d,e,\ell}) = K(x,y)$; then the pole divisor  of $y$ is given by $(y)_{\infty} = 3P_\infty$, where $P_\infty$ is the unique place of $K(\cX)$ centered at $(1:0:0)$. Since the gap sequence of a hyperelliptic curve over $K$ is classical (see e.g. \cite[Satz 8]{schm}) and $g \geq 4$, then $P_\infty$ is a Weierstrass point for $\cZ_{d,e,\ell}$. Let $H(P_\infty)$ be the Weierstrass semigroup at $P_\infty$. Then $3 \in H(P_\infty)$. But $2 \in H(P_\infty)$ as $P_\infty$ is a Weierstrass point, which gives that $H(P_\infty)=\{0,2,3,\ldots\}$, a contradiction since $g >1$.

Let $p\neq g+1$ with $p >2$. Let $i$ denote the hyperelliptic involution of $\cX$, and $\alpha \in \aut(\cX)$ with $\alpha^{g+1} = 1$. Then $\alpha \in \aut(\cX)/\langle i \rangle $ as $g+1$ is odd. By \cite[Satz 5.1 and 5.6, Lemma 5.5]{brandt}, $\aut(\cX)$ has a cyclic automorphism group of order $g+1$ if and only if there exist $u,t$ generators of $K(\cX)$ such that 
$$
u^2 = t^\nu\prod_{j=0}^{s-1}(t^{g+1}-a_j),
$$
where $\nu \in \{0,1\}, s \in \mathbb{N}_+$ and the $a_j \in K^*$ are pairwise distinct. Thus $\nu = 0, s= 2$, and our claim follows. 
\end{proof}

\begin{proposition}
Let $\cX_{r,s,t,a}: Y^{g+1} = X^r(X-1)^s(X-a)^t$ be a tame $(g+1)$-curve. If $r+s+t \leq g$, then $\cX_{r,s,t,a}$ is not hyperelliptic. 
\end{proposition}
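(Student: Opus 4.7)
I would argue by contradiction. Suppose $\cX = \cX_{r,s,t,a}$ is hyperelliptic and let $\iota$ denote its hyperelliptic involution. Since $g \geq 2$, $\iota$ lies in the center of $\aut(\cX)$, so it commutes with the generator $\alpha$ of $G = C_{(g+1)}$. Consequently, $\iota$ permutes the four $\alpha$-fixed points $\{P_0, P_1, P_a, P_\infty\}$. The overall strategy is to show that all four of them are Weierstrass points (equivalently, fixed by $\iota$), and then to derive a divisibility condition on $g+1$ that fails because $g+1$ is a prime with $g+1 \geq 5$.

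To identify the $\alpha$-fixed points as Weierstrass points, I would exhibit at each one a rational function whose pole divisor is supported only on that point, with pole order at most $g$. Since the Kummer cover $x \colon \cX \to \mathbb{P}^1$ is totally ramified at $P_0, P_1, P_a, P_\infty$, a direct computation starting from $y^{g+1} = x^r(x-1)^s(x-a)^t$ yields
\[
(y) = rP_0 + sP_1 + tP_a - (r+s+t)P_\infty.
\]
Hence $y$ has its only pole at $P_\infty$, of order $r+s+t \leq g$, and analogously $y/x$, $y/(x-1)$, $y/(x-a)$ have unique poles at $P_0, P_1, P_a$ of respective orders $g+1-r$, $g+1-s$, $g+1-t$, each an integer in $[1, g]$ because $1 \leq r, s, t \leq g$. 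Now, on a hyperelliptic curve of genus $g \geq 2$ a point that is not a Weierstrass point has Weierstrass semigroup $\{0\} \cup \{n : n \geq g+1\}$, so any point admitting a positive non-gap $\leq g$ must be a Weierstrass point. Therefore $P_0, P_1, P_a, P_\infty$ are all Weierstrass points, hence fixed by $\iota$.

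To conclude, one uses that the hyperelliptic curve has exactly $2g+2$ Weierstrass points, on which $\langle \alpha\rangle$ acts because $\alpha$ commutes with $\iota$. Since $\alpha$ has prime order $g+1$ and fixes precisely the four points $P_0, P_1, P_a, P_\infty$, the remaining $2g-2$ Weierstrass points decompose into $\alpha$-orbits of length $g+1$. Thus $(g+1) \mid 2g-2 = 2(g+1) - 4$, which forces $(g+1) \mid 4$; but $g > 2$ combined with $g+1$ prime gives $g+1 \geq 5$, a contradiction. The main technical step is the production of the four witness functions $y$, $y/x$, $y/(x-1)$, $y/(x-a)$, which rests on a careful divisor computation; once these are in hand, the orbit-counting argument closes the proof immediately.
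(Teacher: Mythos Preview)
Your argument is correct, and it follows a genuinely different path from the paper's proof. The paper also argues by contradiction and also observes that $y$ produces the non-gap $r+s+t\le g$ at $P_\infty$; but instead of showing that all four $\alpha$-fixed points are Weierstrass and then counting $\alpha$-orbits on the $2g+2$ Weierstrass points, the paper invokes the earlier characterization of tame hyperelliptic $(g+1)$-curves (the model $Y^2=(X^{g+1}-a_0)(X^{g+1}-a_1)$, coming from Brandt's result) to conclude that an $\alpha$-fixed point \emph{cannot} be a Weierstrass point, and obtains the contradiction immediately at $P_\infty$.

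What each approach buys: the paper's route is shorter---one function and one citation---but it leans on the external classification of hyperelliptic curves admitting a cyclic automorphism of given order. Your route is entirely self-contained: the three extra witness functions $y/x$, $y/(x-1)$, $y/(x-a)$ (whose divisors you compute correctly, using $r,s,t\ge 1$ and $r+s+t\le g$ to ensure each has a unique pole of order in $[1,g]$) replace that citation, and the orbit count $(g+1)\mid 2g-2$ finishes cleanly. Note that both arguments tacitly use the standing hypothesis $p\neq 2$ of the subsection (you need the hyperelliptic involution to have exactly $2g+2$ fixed points; the paper needs the Brandt model).
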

\begin{proof} By contradiction, assume that $\cX_{r,s,t,a}$ with $r+s+t \leq g$  is hyperelliptic. Let $P_\infty$ be the unique place centered at $(1:0:0)$. On the one hand, by Theorem \ref{thm:hypaut} (b), we have that $P_\infty$ cannot be a Weierstrass point for $\cX_{r,s,t,a}$ as it is fixed by the automorphism  of order $(g+1)$ given by $\alpha(x,y) = (x,\lambda y)$, where $\lambda$ is a primitive $(g+1)$-th root of the unity. 
 Hence, $P_\infty$ has the classical gap sequence $G(P_\infty) = \{1,\ldots,g\}$ as $\cX_{r,s,t,a}$ is hyperelliptic. On the other hand, as $(y)_\infty = (r+s+t)P_\infty$, we have that $l= r+s+t \leq g$ is a non-gap at $P_\infty$, a contradiction. 
\end{proof}

We are now in a position to characterize all tame hyperelliptic $(g+1)$-curves for $p \neq 2$.

\begin{proposition}\label{carattip}
Let $\cX_{r,s,t,a}$ be a tame $(g+1)$-curve defined by
$$
Y^{g+1} = X^r(X-1)^s(X-a)^t,
$$
where $a \in K \backslash\{0,1\}$, $r,s,t<g+1$, $r+s+t \not\equiv 0 \mod g+1$ and $(r,s,t) \neq w(r',s',t')$ where $r'+s'+t' \leq g$. Then $\cX_{r,s,t,a}$ is hyperelliptic if, and only if, $\{r,s,t\}=\{d,g+1-d\}$ with $0<d<g+1$. 
\end{proposition}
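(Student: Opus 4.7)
The plan is to prove each implication separately by tracking the local rotation numbers of $\alpha$ at its four fixed points, which lie above the branch points $\bar P_0,\bar P_1,\bar P_a,\bar P_\infty$ of the Kummer cover $\pi:\cX_{r,s,t,a}\to\cX_{r,s,t,a}/G\cong\mathbb{P}^1$. A direct computation with local parameters shows that, after fixing a primitive $(g+1)$-th root of unity $\lambda$, these rotation numbers are $\lambda^{r^{-1}},\lambda^{s^{-1}},\lambda^{t^{-1}},\lambda^{-(r+s+t)^{-1}}$, the inverses being taken modulo the odd prime $g+1$.

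For the direction $(\Leftarrow)$, after relabeling the three finite branch points one may assume $(r,s,t)=(d,d,g+1-d)$ for some $0<d<g+1$. Dividing the defining equation by $(x-a)^{g+1}$ yields $(v/(x-a))^{g+1}=(x(x-1)/(x-a))^d$, and the primality of $g+1$ together with $0<d<g+1$ allows us to extract an element $w$ of the function field with $w^{g+1}=x(x-1)/(x-a)$. Rewriting this as the quadratic $x^2-(1+w^{g+1})x+aw^{g+1}=0$ in $x$ and letting $Y$ be a square root of its discriminant produces
$$
Y^{2}=w^{2(g+1)}+(2-4a)w^{g+1}+1=(w^{g+1}-\beta_0)(w^{g+1}-\beta_1),
$$
with $\beta_0\beta_1=1$ and $\beta_0+\beta_1=4a-2$ both in $K^*$. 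This is precisely the hyperelliptic model from Theorem \ref{thm:hypaut}.

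For the direction $(\Rightarrow)$, since $g+1$ is an odd prime the hyperelliptic involution $i$ of $\cX_{r,s,t,a}$ is central in $\aut(\cX_{r,s,t,a})$, and in particular commutes with $\alpha$. Thus $i$ descends to an involution $\bar i$ on $\mathbb{P}^1\cong\cX_{r,s,t,a}/G$ which permutes the four branch points of $\pi$. The Riemann--Hurwitz formula applied to $\cX_{r,s,t,a}\to\cX_{r,s,t,a}/\langle i\rangle$ yields $|\mathrm{Fix}(i)|=g+1$; since $\langle\alpha\rangle$ acts on $\mathrm{Fix}(i)$ with orbits of size $1$ or $g+1$, the count $|\mathrm{Fix}(i)\cap\mathrm{Fix}(\alpha)|\equiv 0\pmod{g+1}$ combined with $g+1\geq 5>4=\rho(\alpha)$ (which follows from $g>2$ together with the primality of $g+1$) forces $\mathrm{Fix}(i)\cap\mathrm{Fix}(\alpha)=\emptyset$. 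Hence $\bar i$ fixes none of the four branch points and splits them into two transposed pairs. Expressing $i\alpha=\alpha i$ in local coordinates at a pair of swapped fixed points equates their rotation numbers and therefore their $\alpha$-exponents. Combining this with $r+s+t+e_\infty\equiv 0\pmod{g+1}$ (where $e_\infty=-(r+s+t)$) and the oddness of $g+1$, a brief case analysis over the three possible pairings shows that in every case the multiset of the four exponents is $\{d,d,g+1-d,g+1-d\}$, whence $\{r,s,t\}=\{d,g+1-d\}$ as sets.

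The step I expect to demand the most care is the algebraic reduction in $(\Leftarrow)$: extracting a $(g+1)$-th root of $x(x-1)/(x-a)$ inside the function field, and then recognising the resulting relation as a quadratic in $x$ whose discriminant factors as $(w^{g+1}-\beta_0)(w^{g+1}-\beta_1)$, uses the primality of $g+1$ in an essential way. In $(\Rightarrow)$ the delicate point is excluding the possibility that the central involution fixes any of the four branch points of $\alpha$, which is exactly where the genus hypothesis $g>2$ (forcing $g+1>\rho(\alpha)=4$) is needed.
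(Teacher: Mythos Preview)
Your argument is essentially correct, but there is one numerical slip: the hyperelliptic involution $i$ fixes $2g+2$ points, not $g+1$ (Riemann--Hurwitz with $\bar g=0$ and $p\neq 2$ gives $2g-2=-4+|\mathrm{Fix}(i)|$). Fortunately this does not damage the argument, since $2g+2\equiv 0\pmod{g+1}$ and the rest of the orbit count goes through unchanged. You should also make explicit that $K(\cX)=K(x,w)$ (equivalently, that $y$ is recovered from $w$ and $x$), so that the quadratic relation for $x$ over $K(w)$ really exhibits a degree-$2$ subfield of $K(\cX)$; this follows from B\'ezout on $d$ and $g+1$ but deserves a line.

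Your route differs from the paper's. For $(\Leftarrow)$ the paper writes down, via a divisor computation, an explicit function $f$ with $(f)=P_\infty+P_a-P_0-P_1$ and concludes $[K(\cX):K(f)]=2$; your $w$ with $w^{g+1}=x(x-1)/(x-a)$ is exactly $f^{-1}$ up to a constant, so the two constructions coincide, though your presentation has the bonus of landing directly on the hyperelliptic model of Theorem~\ref{thm:hypaut}. For $(\Rightarrow)$ the approaches genuinely diverge: the paper computes the action of the hyperelliptic involution $\mu$ on the generators, determines $\mu(x)$ from divisors, shows $\mu(y)=f(x)y$ by a linear-independence argument, and then compares two expressions for $\mu(y^{g+1})$ to force $r=s$ and $r+t=g+1$. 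Your rotation-number argument replaces all of this explicit calculation by the single observation that commuting with $\alpha$ forces equal eigenvalues at swapped fixed points; this is cleaner and immediately handles all three possible pairings at once, whereas the paper fixes one pairing ``without loss of generality''. The paper's computation, on the other hand, yields the explicit formula $\mu(y)=a(a-1)y/(x-a)^2$ and the value $a=1/2$ for the specific pairing chosen, information your method does not produce.
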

\begin{proof}
Assume that $r=s$ and $r+t=g+1$. As before, set $G=\langle \alpha \rangle$, where $\alpha:(x,y)\mapsto (x,\lambda y)$ and $\lambda$ is a primitive $(g+1)$-th root of unity.  Denote by $P_0,P_1,P_a$ and $P_\infty$ the points of $\cX_{r,s,t,a}$ lying over the points $\tilde{P}_0=(0:1), \tilde{P}_1=(1:1), \tilde{P}_a =(a:1)$ and $\tilde{P}_\infty=(1:0) \in \cX_{r,s,t,a}/G$ respectively. Since $r=s$, we have 
$$
(y)=rP_0+rP_1+tP_a-(2r+t)P_\infty \ \ \ \text{ and } \ \ \ (x-i)=(g+1)P_i-(g+1)P_\infty,
$$
for $i=0,1,a$. Since $g+1$ is prime and $r<g+1$, there exist $m,n \in \mathbb{Z}$ such that $-mr+n(g+1)=1$. For $\ell=m-n$, define
$$
f=\frac{y^m}{x^n(x-1)^n(x-a)^\ell} \in K(\cX_{r,s,t,a}).
$$
Then
\begin{eqnarray}
(f) &=&(mr-n(g+1))(P_0+P_1)+(mt-\ell(g+1))P_a+((2n+\ell)(g+1)-m(2r+t))P_\infty \nonumber \\
    &=& P_\infty+P_a-P_0-P_1. \nonumber
\end{eqnarray}
In particular, $[K(\cX_{r,s,t,a}):K(f)]=2$, and so $\cX_{r,s,t,a}$ is hyperelliptic.

Conversely, if $\cX_{r,s,t,a}$ is hyperelliptic, then there exists an involution $\mu \in \aut(\cX_{r,s,t,a})$ which fixes a rational subfield. Also, $\mu$ is central in $\aut(\cX_{r,s,t,a})$, whence it commutes with $G$.
 In particular, $\mu$ must act semi-regularly on the set $\Omega = \{P_0,P_1,P_a,P_\infty\}$. Without loss of generality, we may assume that $\mu(P_0) = \mu(P_1), \mu(P_a) = P_\infty$. As  $(x-a)=(g+1)P_a-(g+1)P_\infty$, there is a constant $c \in K^{*}$ such that

$$
\mu(x)=\frac{x-\frac{a^2-c}{a}}{x/a-1}.
$$
Since the only zero of $\mu(x)$ is $P_1$, we conclude that $\frac{a^2-c}{a}=1$, i.e., $c=a^2-a$. Let $f_k \in K(x)$, with $k=0,\ldots,g$ such that

\begin{equation}\label{comb}
\mu(y)=\sum_{k=0}^{g}f_ky^k.
\end{equation}
Combining \eqref{comb} with the facts that $\mu \alpha=\alpha \mu$ and $\alpha(f_k)= f_k$, we obtain
$$
\sum_{k=0}^{g}(\lambda^k-\lambda)f_ky^k=0,
$$
which gives that $f_k=0$ for $k \neq 1$, as the set $\{f_0,f_1 y,\ldots,f_gy^g\}$ is linearly independent over $K$. Therefore, there is $f(x) \in K(x)$ such that $\mu(y)=f(x)y$. Now, on the one hand, the expression of $\mu(x)$ provides
$$
\mu(y^{g+1})=\mu(x^r(x-1)^s(x-a)^t) = a^{r+t}(a-1)^{s+t} \frac{x^s(x-1)^r}{(x-a)^{r+s+t}}.
$$
On the other hand, from $\mu(y)=f(x)y$ we obtain
$$
\mu(y^{g+1})= f(x)^{g+1}x^r(x-1)^s(x-a)^t.
$$ 
Comparing both equations, we obtain 
$$
f(x)^{g+1}=  \frac{a^{r+t}(a-1)^{s+t}x^{s-r}(x-1)^{r-s}}{(x-a)^{r+s+2t}}.
$$ 
Thus $g+1$ divides both $r-s$ and $r+s+2t$. Since $r,s,t \leq g$, we conclude that $r=s$ and $r+t=g+1$.  In particular, $\mu(y)=\frac{a(a-1)y}{(x-a)^2}$.
\end{proof}

\subsection{Automorphism groups of tame $(g+1)$-curves}

Throughout this subsection, $\cX$ is a tame, non-hyperelliptic $(g+1)$-curve. Recall that $G \leq \aut(\cX)$ is a group of prime order equal to $(g+1)$, $\alpha$ is a generator of $G$ and  $\Omega = \{P_0,P_1,P_a,P_\infty\}$ is the set of points fixed by $\alpha$. We compute the full automorphism group $\aut(\cX)$ through a number of (mostly) group-theoretical results. More in detail, we aim to prove the following result. 

\begin{theorem}\label{resumog+1}
For $g \geq 4$, let $\cX$ be a tame non-hyperelliptic $(g+1)$-curve defined over an algebraically closed field $K$ of characteristic $p \neq 2,3$. Then $G$ is normal in $\aut(\cX)$, whence $|\aut(\cX)| =c(g+1)$, for $c \in \{1,2,3,4\}$, unless $g = 4$ and $\cX$ is isomorphic to the curve 
$$
\mathcal{Q}: Y^5 = X(X-1)^4(X-\ha)^2. 
$$
\end{theorem}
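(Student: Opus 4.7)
The plan is to decompose the proof into three stages. First, I study the normalizer $N = N_{\aut(\cX)}(G)$ and its action on the four $G$-fixed points. Second, I use the Giulietti--Korchm\'aros classification (Theorem \ref{thmgenuseven}), applicable since $g+1 \geq 5$ is an odd prime and $p \neq 2$, to force $G$ to be normal in $\aut(\cX)$ outside the stated exception. Third, once $G$ is normal, I bound $c = [\aut(\cX):G]$ by examining the $\PGL(2,K)$-stabilizer of the branch set $\Omega \subset \cX/G \cong \mathbb{P}^1$ together with the constraint imposed by the ramification exponents.

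For the first stage, exactly as in Lemma \ref{sylow}, $G$ must be a Sylow $(g+1)$-subgroup of $\aut(\cX)$: any $(g+1)^2$-subgroup would be abelian, contradicting \cite[Theorem 11.79]{hirschfeld-korchmaros-torres2008}. Conjugation by $N$ preserves the fixed-point set $\Omega$, so there is a natural homomorphism $\rho: N \to \Sym(\Omega) \cong S_4$. If $\sigma \in \Ker\rho$, then $H = \langle G, \sigma \rangle$ fixes each point of $\Omega$ pointwise, so $H_P = H$ for every $P \in \Omega$; the Riemann--Hurwitz formula applied to $\cX \to \cX/H$ in the tame regime gives $2g-2 \geq 2|H|(\bar g - 1) + 4(|H|-1)$, which forces $|H| \leq g+1$ and therefore $\sigma \in G$. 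Hence $N/G \hookrightarrow S_4$ and $|N| \leq 24(g+1)$. If $G$ were not normal in $\aut(\cX)$, Sylow's theorem would yield $n_{g+1} \geq g+2$ conjugates of $G$, whence $|\aut(\cX)| = n_{g+1}|N| \geq (g+1)(g+2)$.

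For the second stage, cases (i)--(ii) of Theorem \ref{thmgenuseven} force $\aut(\cX)/O(\aut(\cX))$ to be trivial or a $2$-group, so $G \leq O(\aut(\cX))$; working inside the solvable odd-order group $O(\aut(\cX))$ and using the normalizer bound, I show that $G$ is the unique (hence characteristic) Sylow $(g+1)$-subgroup of $O(\aut(\cX))$, which makes $G$ normal in $\aut(\cX)$---contradicting our assumption. In cases (iii)--(xi), I combine the divisibility condition that $g+1$ divides the order of the displayed non-solvable group with $|\aut(\cX)| \leq 24(g+1)n_{g+1}$ and the structure of $N$ to rule out all but finitely many sporadic configurations. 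The heart of the argument is case (iv), where $\PSL(2,q) \leq \aut(\cX)/O(\aut(\cX)) \leq {\rm P\Gamma L}(2,q)$ admits many $q$: here $g+1$ must divide $q(q^2-1)$, and a comparison with the bound $|N| \leq 24(g+1)$ and with the equations of Theorem \ref{thm:p = g+1}(a) leaves only $g = 4$, $q = 5$ and identifies the curve with $\mathcal{Q}$ (whose automorphism group then contains $A_5 \cong \PSL(2,5)$). Cases (iii) and (v)--(xi) are eliminated analogously via order comparisons.

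For the third stage, $\aut(\cX)/G$ acts faithfully on $\cX/G \cong \mathbb{P}^1$ as a subgroup of the setwise $\PGL(2,K)$-stabilizer of $\Omega$. Such a stabilizer is generically the Klein four-group of order $4$, growing to $D_4$ of order $8$ for a harmonic cross-ratio, or to $A_4$ of order $12$ for an equianharmonic one. The lift of a M\"obius transformation to $\cX$ must moreover be compatible with the tuple of exponents $(r,s,t,u) \pmod{g+1}$ of the Kummer cover, up to an overall unit in $(\mathbb{Z}/(g+1))^*$. A direct combinatorial check shows that this extra constraint keeps $c$ in $\{1,2,3,4\}$ except when the four exponents form a complete set of nonzero residues modulo $g+1$ and the cross-ratio is harmonic; the only such configuration is $g = 4$, branch locus $\{0,1,\ha,\infty\}$, and exponents $(1,4,2,3)$, which is exactly $\mathcal{Q}$. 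The main obstacle throughout is handling case (iv) of Theorem \ref{thmgenuseven}, where the possibilities are not a priori bounded, and one really needs the combination of the normalizer constraint, the divisibility $(g+1) \mid q(q^2-1)$, and the rigidity of the Kummer equations to isolate the unique exception.
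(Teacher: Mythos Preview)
Your high-level strategy matches the paper's---study the normalizer $N$, invoke the Giulietti--Korchm\'aros classification (Theorem~\ref{thmgenuseven}), then bound $c$---but the execution has genuine gaps that the paper fills with substantial intermediate results you do not mention.

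The most serious gap is in your second stage. Your claim that $G$ is the unique Sylow $(g+1)$-subgroup of $O(\aut(\cX))$ ``using the normalizer bound'' is not justified: knowing $|N_O(G)/G|\in\{1,3\}$ does not by itself force $G\trianglelefteq O$. The paper needs three non-trivial ingredients you omit. First, Proposition~\ref{tamenontame} shows that $\aut(\cX)_P^{(1)}$ is trivial for $P\in\Omega$; this tameness of stabilizers is what allows one to control the short-orbit structure (Corollary~\ref{biggroup}). Second, Lemma~\ref{lematecnico} proves that if $G$ is non-normal in any overgroup $G'\le\aut(\cX)$, then $G'$ contains an element of order $4$ normalizing $G$ (and in particular no involution commutes with $\alpha$), and $|G'|=4(g+1)(d(g+1)+1)$. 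Applying this to $G'=O(\aut(\cX))$ immediately gives $4\mid |O|$, contradicting oddness---this is how the paper's Proposition~\ref{oddcoretrivial} actually concludes $O=1$. Without these lemmas your argument for cases (i)--(ii), and hence your reduction to $G\not\le O$ in cases (iii)--(xi), does not go through. Moreover, the paper's case analysis in Theorem~\ref{hurwitzholds} repeatedly uses Lemma~\ref{lematecnico}(iii)--(iv) (e.g.\ to exclude $\SL(2,q)$ via its central involution, and to eliminate $\PSL(2,2^w)$ via the absence of cyclic $4$-subgroups); your ``order comparisons'' do not capture this.

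There is also a logical slip in your third stage: you re-derive the exception $\mathcal{Q}$ there, but stage three assumes $G$ is already normal, whereas for $\mathcal{Q}$ one has $\aut(\mathcal{Q})\cong S_5$ and $G$ is \emph{not} normal (indeed $c=24$ exceeds your own $\PGL(2,K)$-stabilizer bound of $12$). In the paper the exception arises in Theorem~\ref{teoremchar0g+1}, the separate analysis of the case where $\aut(\cX)$ has $\ge 3$ short orbits, via Riemann--Hurwitz and the tables of \cite{maagard}; once one is past that and into the Giulietti--Korchm\'aros regime (Theorem~\ref{hurwitzholds}, $g>4$), no exception occurs. Your outline collapses these two regimes and loses track of where the exception lives.
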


\begin{lemma}
$G$ is a Sylow $(g+1)$-subgroup of $\aut(\cX)$. 
\end{lemma}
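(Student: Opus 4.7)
The plan is to mimic the argument given in Lemma \ref{sylow} for the $(2g+1)$-curve case almost verbatim. First I would argue by contradiction: suppose $G$ is \emph{not} a Sylow $(g+1)$-subgroup, and let $S$ be a Sylow $(g+1)$-subgroup of $\aut(\cX)$ that properly contains $G$. Since $g+1$ is prime, $|S|=(g+1)^i$ for some $i\geq 2$, and a standard fact about $p$-groups (every $p$-group has subgroups of every order dividing its size) produces a subgroup $S_1\leq S$ of order exactly $(g+1)^2$.

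Next, every group of order $p^2$ with $p$ prime is abelian, so $S_1$ is abelian. Moreover, since $\cX$ is a \emph{tame} $(g+1)$-curve we have $p\neq g+1$, so every element of $S_1$ has order coprime to $p$; that is, $S_1$ is a tame abelian subgroup of $\aut(\cX)$. I would then invoke \cite[Theorem 11.79]{hirschfeld-korchmaros-torres2008}, which bounds the order of any such group by $4g+4=4(g+1)$. Combining $|S_1|=(g+1)^2$ with $|S_1|\leq 4(g+1)$ forces $g+1\leq 4$, i.e.\ $g\leq 3$, contradicting the standing hypothesis $g\geq 4$ of Theorem \ref{resumog+1}.

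There is essentially no obstacle here: the argument is entirely group-theoretic and mirrors Lemma \ref{sylow} exactly. The only subtle point worth flagging is that, in contrast to the $(2g+1)$-curve situation, the prime $g+1$ is now small enough that the inequality $(g+1)^2\leq 4(g+1)$ is only contradicted once $g\geq 4$; this is precisely the reason the genus hypothesis is built into the statement of Theorem \ref{resumog+1}. No separate treatment of the characteristic is required, since the tameness of $\cX$ ensures $p\nmid g+1$ and therefore makes \cite[Theorem 11.79]{hirschfeld-korchmaros-torres2008} directly applicable to $S_1$.
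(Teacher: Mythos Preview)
Your proposal is correct and follows exactly the same approach as the paper's own proof, which is essentially a verbatim copy of Lemma \ref{sylow}: assume a larger Sylow $(g+1)$-subgroup, extract an abelian subgroup $S_1$ of order $(g+1)^2$, and contradict \cite[Theorem 11.79]{hirschfeld-korchmaros-torres2008}. Your write-up is in fact more careful than the paper's, since you make explicit the numerical comparison $(g+1)^2>4(g+1)$ and correctly observe that this is where the hypothesis $g\geq 4$ enters.
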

\begin{proof}  By contradiction, let $S$ be a Sylow $(g+1)$-subgroup of $\aut(\cX)$ with $|S| = (g+1)^i$, $ i \geq 2$. Then $S$ has a subgroup $S_1$ of order $(g+1)^2$. Then $S_1$ is abelian, a contradiction to  \cite[Theorem 11.79]{hirschfeld-korchmaros-torres2008}. 
\end{proof}

\begin{rem}\label{remark:s4}
Let $N$ be the normalizer of $G$ in $\aut(\cX)$. Then  $N$ must act on the set $\Omega $, that is, there exists a permutation representation $\rho: N \rightarrow {\rm S}_4$. A computation via the Riemann-Hurwitz formula shows that $\Ker(\rho)  = G$. Hence, $N/G \leq {\rm S}_4 \leq \PGL(2,K)$.  

\end{rem}

\begin{proposition}\label{prop:c3}
Let $C_3 \leq \aut(\cX)$ with $ |C_3| = 3$. If $C_3$ normalizes $G$, then either 
\begin{enumerate}
\item[{\rm(a)}] $p \neq 3$, and $\cX$ is birationally equivalent to the curve $Y^{g+1}+X^3+1 = 0$;
\item[{\rm (b)}] $p = 3$, and $\cX$ is birationally equivalent to the curve $Y^{g+1} = X^3-X$.
 \end{enumerate}
\end{proposition}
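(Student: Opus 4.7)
The strategy is to analyze the faithful action of $C_3$ on the genus-zero quotient $\cX/G \simeq \mathbb{P}^1$ and use it to pin down the equation of the Kummer cover $\cX \to \cX/G$.

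First, since $g+1$ is prime and (as $\cX$ is non-hyperelliptic, so $g \geq 3$, forcing $g+1 \geq 5$) we have $|G| \geq 5$, whence $C_3 \cap G = \{1\}$. By Remark \ref{remark:s4}, $C_3$ embeds into $N/G \leq {\rm S}_4 \leq \PGL(2,K)$, so a generator $\tau$ of $C_3$ acts as an order-$3$ projectivity on $\cX/G \simeq \mathbb{P}^1$ and permutes the four branch points of the cover (the images of $\Omega = \{P_0,P_1,P_a,P_\infty\}$). Being of order $3$, $\tau$ must fix exactly one of them and cyclically permute the other three.

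I would then split into cases by characteristic. If $p \neq 3$, then $\tau$ is semisimple with two fixed points on $\mathbb{P}^1$, of which exactly one is a branch point (otherwise $\tau$ would fix two points of $\Omega$). After a $\PGL(2,K)$-coordinate change, $\tau: x \mapsto \omega x$ with $\omega^3 = 1 \neq \omega$, the fixed branch point is $\infty$, and after rescaling $x$ the orbit of the other three becomes $\{1, \omega, \omega^2\}$. If $p = 3$, every order-$3$ element of $\PGL(2,K)$ is unipotent with a unique fixed point on $\mathbb{P}^1$, which is necessarily the fixed branch point; after conjugation and translation, $\tau: x \mapsto x+1$, the fixed branch point is $\infty$, and the three-element orbit is $\{0,1,2\}$.

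With these normalizations, write the Kummer cover as $y^{g+1} = h(x) = \prod_{i=0}^{2}(x - b_i)^{e_i}$ with $1 \leq e_i \leq g$. Since $\tau$ normalizes $G = \langle y \mapsto \lambda y\rangle$, it permutes the $G$-eigenspaces of $K(\cX)$ over $K(x)$, so $\tau(y) = r(x)\,y^{j}$ for some $r \in K(x)^{\ast}$ and some $j$ coprime to $g+1$. Raising to the $(g+1)$-th power yields $h(\tau x) = r(x)^{g+1}\, h(x)^{j}$. Comparing orders of zeros at each of the three affine branch points (where $\tau$ shuffles $b_0 \to b_1 \to b_2 \to b_0$) gives the cyclic congruences $e_{i+1} \equiv j\,e_i \pmod{g+1}$, hence $j^3 \equiv 1 \pmod{g+1}$. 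If $j \not\equiv 1$, then $1 + j + j^2 \equiv 0 \pmod{g+1}$, which forces $e_0 + e_1 + e_2 \equiv e_0(1 + j + j^2) \equiv 0 \pmod{g+1}$, contradicting the nondegeneracy condition in Theorem \ref{thm:p = g+1}(a). Therefore $j = 1$ and $e_0 = e_1 = e_2 = e$ with $\gcd(e,g+1) = 1$.

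Consequently $h(x)$ is the $e$-th power of a separable cubic: $h_0(x) = x^3 - 1$ in case (a), and $h_0(x) = x(x-1)(x-2) = x^3 - x$ in case (b). Choosing integers $a,b$ with $ae + b(g+1) = 1$ and setting $z = y^a h_0(x)^b \in K(\cX)$, one checks $z^{g+1} = h_0(x)$ and $z^e = y$, so $K(\cX) = K(x, z)$ and $\cX$ is birational to the plane model $Z^{g+1} = h_0(X)$. In case (a), the substitution $X \mapsto -X$ transforms $Z^{g+1} = X^3 - 1$ into $Y^{g+1} + X^3 + 1 = 0$ as claimed. The main technical step is the congruence analysis, whose key inputs are the primality of $g+1$ and the Kummer-cover constraint $e_0 + e_1 + e_2 \not\equiv 0 \pmod{g+1}$.
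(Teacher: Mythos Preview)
Your argument is correct and self-contained, but it takes a genuinely different route from the paper's proof. The paper first shows that $H=\langle\alpha,\beta\rangle\cong C_3\times G$ by exploiting the one-point stabilizer structure at the fixed point of $\beta$ in $\Omega$ (for $p\neq 3$ the stabilizer is cyclic; for $p=3$ both factors are forced to be normal). It then proves separately that $\cX/C_3$ is rational, using Homma's theorem and the Riemann--Hurwitz formula to eliminate $\bar g\geq 1$. In case (a) the conclusion is obtained by citing an external result on generalized Fermat curves \cite[Theorem~5.1(a)]{AS}; in case (b) the paper compares the two rational subfields $K(\cX)^{G}$ and $K(\cX)^{C_3}$ inside $K(\cX)^{C_3\times G}$.

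By contrast, you never leave the Kummer model over $\cX/G\simeq\mathbb P^1$: you normalize the projectivity induced by $\tau$ and read off the exponents directly. Your congruence argument $e_{i+1}\equiv j e_i$, $j^3\equiv 1$, together with the nondegeneracy condition $e_0+e_1+e_2\not\equiv 0$, is an elegant way to force $j=1$ (hence $\tau\alpha=\alpha\tau$) and $e_0=e_1=e_2$ simultaneously, without appealing to the stabilizer structure. This makes the two characteristics completely parallel and yields an explicit change of variables $z=y^a h_0(x)^b$ at the end, with no external citation needed. The paper's approach buys brevity in case (a) via the reference to \cite{AS}; yours buys uniformity and an explicit model in both cases.
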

\begin{proof}Let $C_3 = \langle \beta \rangle $. First, we prove that $H = \langle \alpha,\beta \rangle \simeq C_3 \times G$. As $C_3$ must act on $\Omega$ and $\beta$ cannot fix $\Omega$ pointwise,  then $\beta$ must fix a point in $\Omega$, say $P_0$, and act semi-regularly on $\Omega \setminus \{P_0\}$. If $p \neq 3$, this implies that $H \cong C_{3(g+1)} \cong C_3 \times G$. If $p = 3$, then $C_3$ has to be normal in $H$, and again $H \cong C_{3(g+1)} \cong C_3 \times G$. 

We now prove that $\bar{g} = g(\cX/C_3) = 0$. By contradiction, let $\bar{g} \geq 1$. 

If $\bar{g} = 1$, then $G$ must be isomorphic to a subgroup of a non-trivial one-point stabilizer in the automorphism group of an elliptic curve, a contradiction to \cite[Theorem 11.94]{hirschfeld-korchmaros-torres2008}. 

Let $\bar{g} \geq 2$. Then by Theorem \ref{homma1}, 
$g+1 = 2\bar{g}+1$. By applying the Riemann-Hurwitz formula to the cover $\pi: \cX\rightarrow \cX/C_3$, we get 
$$
4\bar{g}-2 \geq 6(\bar{g}-1)+2s,
$$

for a positive integer $s$, that is, 

$$
\bar{g} \leq 2-s. 
$$

However, we have $s \geq 1$ as $\beta(P_0) = P_0$. This is a contradiction, whence $\bar{g} = 0$. 

If $p \neq 3$, as $\cX/G \cong \mathbb{P}^1$, $\bar{C}_3 \cong H/G$ is a  tame cyclic subgroup of $\PGL(2,K)$, and hence  it fixes two points on $\mathbb{P}^1$, $\bar{P}_0 = \pi(P_0)$ and $\bar{Q} \in \mathbb{P}^1\setminus \pi(\Omega)$. This implies that $\langle \alpha, \beta  \rangle $ has three short orbits on $\cX$, namely $\{P_0\}, \Omega \setminus \{P_0\}$ and $\Omega_1$ with $|\Omega_1| = g+1$. Item $(a)$ then follows from \cite[Theorem 5.1 (a)]{AS}. 

If $p = 3$, then $\bar{C}_3 \cong H/G$ is a  non-tame cyclic subgroup of $\PGL(2,K)$, and hence  it fixes exactly one point on $\mathbb{P}^1$, which is $\bar{P}_0 = \pi(P_0)$. Thus the cover $\cX \rightarrow \cX/C_3$ is an Artin-Schreier cover of $\mathbb{P}^1$, and  $\cX/(C_3 \times G)$ has only one fully ramified point. Therefore, there exist $x,y \in K(\cX)$ such that $K(\cX/C_3)=K(y)$, $K(\cX/G)=K(x)$, and $K(\cX/(C_3\times G))=K(x^3-x)=K(y^{g+1})$ with $x^3-x$ and $y^{g+1}$ having a common pole. Hence, $x^3-x=ay^{g+1}+b$, for some $a,b \in K^{*}$. From Galois Theory, one has that $K(\cX)=K(x,y)$, and item (b) follows from a change of coordinates of $(x,y)$. 

\end{proof}

\begin{proposition}\label{prop:involution}
Let $ p \neq 2$, and let $i \in \aut(\cX)$ be an involution normalizing $\alpha$. Let $H =  \langle i \rangle \ltimes G$.  Then, one of the following occurs: 
\begin{enumerate}
\item[{\rm (1)}] H is cyclic, and $i $ has exactly two fixed points in $\Omega$;
\item[{\rm (2)}] H is cyclic, and $i$ fixes no point in $\Omega$; in this case, $\cX$ is hyperelliptic; 
\item[{\rm (3)}] H is dihedral, and $i$ has exactly two fixed points in $ \cX\setminus \Omega$. 
\end{enumerate}
\end{proposition}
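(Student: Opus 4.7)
The plan is to analyze the action of $i$ on $\Omega$ through the quotient map $\pi\colon \cX \to \cX/G \cong \mathbb{P}^1$. Since $i$ normalizes $G$ (Remark \ref{remark:s4}), it descends to a well-defined involution $\bar\imath$ of $\mathbb{P}^1$. Because $p \neq 2$, $\bar\imath$ has exactly two fixed points on $\mathbb{P}^1$, and it permutes the four-point set $\pi(\Omega)$. An involution cannot fix an odd number of points, so $\bar\imath$ fixes either $0$ or $2$ points of $\pi(\Omega)$; since each $\bar P \in \pi(\Omega)$ has a single preimage in $\Omega$, the same dichotomy applies to $i$ acting on $\Omega$.

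Suppose first that $i$ fixes some point $P \in \Omega$; by the dichotomy, $i$ then fixes exactly two points of $\Omega$. Both $\alpha$ and $i$ lie in the stabilizer $H_P$, forcing $H_P = H$. In the tame case, the one-point stabilizer is cyclic by \cite[Theorem 11.74]{hirschfeld-korchmaros-torres2008}, so $H$ must be cyclic, and we land in conclusion (1). In particular, if $H$ is dihedral, then $i$ fixes no point of $\Omega$.

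Now assume $i$ fixes no point of $\Omega$, and let $\bar Q_1, \bar Q_2 \in \mathbb{P}^1 \setminus \pi(\Omega)$ be the two fixed points of $\bar\imath$. Each fiber $\pi^{-1}(\bar Q_\nu)$ consists of $g+1$ points on which $G$ acts regularly and on which $i$ acts as an involution. Fix such a fiber, choose $y_0$ in it, write $y_j = \alpha^j(y_0)$, and set $i(y_0) = y_k$. If $H$ is cyclic, then $i\alpha = \alpha i$, hence $i(y_j) = y_{j+k}$; the relation $i^2 = 1$ gives $2k \equiv 0 \pmod{g+1}$, and since $g+1$ is an odd prime this forces $k = 0$. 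Thus $i$ fixes every point of both fibers, a total of $2(g+1)$ fixed points, and Riemann--Hurwitz for $\cX \to \cX/\langle i \rangle$ produces a quotient of genus $0$, so $\cX$ is hyperelliptic --- conclusion (2). If instead $H$ is dihedral, then $i\alpha = \alpha^{-1}i$, so $i(y_j) = y_{k-j}$; the map $j \mapsto k-j$ on $\mathbb{Z}/(g+1)\mathbb{Z}$ has the unique fixed point $j = k/2$ because $2$ is invertible modulo the odd prime $g+1$. Summing over the two fibers, $i$ has exactly two fixed points, both in $\cX \setminus \Omega$ --- conclusion (3).

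The only real obstacle is keeping the three cases organized at once and making the fiber combinatorics precise; once one exploits that $g+1$ is an odd prime (so that $2$ is invertible modulo $g+1$), the fixed-point counts fall out of elementary involution-on-cyclic-group arithmetic, and the cyclic-versus-dihedral dichotomy of $H$ is settled by the tameness of the stabilizer in case (A).
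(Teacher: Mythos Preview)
Your argument is correct and follows the same overall architecture as the paper's: descend $i$ to an involution $\bar\imath$ on $\cX/G\cong\mathbb{P}^1$, use that $\bar\imath$ has exactly two fixed points since $p\neq 2$, and split into cases according to whether these lie in $\pi(\Omega)$ or not.

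The one substantive difference is in the dihedral case~(3). The paper establishes that $i$ has exactly two fixed points by invoking Accola's result \cite[Application~3, $n$ odd]{Ac} to obtain $g(\cX/\langle i\rangle)=g/2$ and then reading off the fixed-point count from Riemann--Hurwitz. Your direct fiber computation---writing $i(y_j)=y_{k-j}$ and observing that $j\mapsto k-j$ has a unique fixed point on $\mathbb{Z}/(g+1)\mathbb{Z}$ because $2$ is invertible modulo the odd prime $g+1$---is more elementary and entirely self-contained, avoiding the external citation. The paper's route has the side benefit of yielding the genus of $\cX/\langle i\rangle$, but that is not needed for the proposition itself. Your justification that $H$ is cyclic in case~(1), via the cyclicity of tame one-point stabilizers, is also more explicit than the paper's bare assertion. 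One small cosmetic point: your closing reference to ``case~(A)'' should read ``case~(1)''.
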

\begin{proof} It is immediately seen  that $H$ is either cyclic or dihedral. Further, as $i $ normalizes $\alpha$, we have that $\bar{i} = H/\langle \alpha \rangle$ is an involution in $\PGL(2,K)$. As $p >2$, then $i$ fixes two points $\bar{Q} ,\bar{R} \in \mathbb{P}^1$. Using the same notation as in Proposition \ref{prop:c3}, we see that either $\bar{Q} ,\bar{R} \in \pi(\Omega)$ or $\bar{Q} ,\bar{R} \not\in \pi(\Omega)$. 

If the former holds, then $H$ is cyclic and $i$ has exactly two fixed points. Without loss of generality, we may assume that such points are $P_0$ and $P_\infty$. Also, by \cite[Proposition 3.7.3]{stbook}, $\cX$ is birationally equivalent to a curve with affine equation
$$
\cH: Y^{2(g+1)} = X^r(X-1)^{2s},
$$
with $\gcd(r,2(g+1)) = \gcd(s,g+1) = \gcd(r+2s,2(g+1)) = 1$. 

If the latter holds, then either $i$ commutes with $\alpha$ or $i\alpha = \alpha^{-1}i$. 
If $i\alpha = \alpha i $, then again $H$ is cyclic. This time, however, $\langle i,\alpha\rangle$ has four short orbits, namely $\Omega_1$ and $\Omega_2$, (each of size two), with $\Omega_1 \cup \Omega_2 = \Omega$ and $\Omega_3$ and $\Omega_4$ (each of size $g+1$), with  $\Omega_3 = \pi^{-1}(\bar{Q})$ and $\Omega_4 = \pi^{-1}(\bar{R})$. Clearly, $i$ must fix a point $Q_1 \in \Omega_3$ and a point $R_1 \in \Omega_4$. But $i$ is the only involution in the cyclic group $H$, whence it fixes $\Omega_3$ and $\Omega_4$ pointwise. Then, $\cX$ is hyperelliptic as $i$ fixes $2g+2$ points. 

We are left with the case when $H$ is dihedral. As in the previous case, $H$ has four short orbits, namely $\Omega_1$ and $\Omega_2$ of size two, with $\Omega_1 \cup \Omega_2 = \Omega$ and $\Omega_3$ and $\Omega_4$ of size $g+1$, with  $\Omega_3 = \pi^{-1}(\bar{Q})$ and $\Omega_4 = \pi^{-1}(\bar{R})$, and $i$ fixes at least a point $Q_1 \in \Omega_3$ and a point $R_1 \in \Omega_4$. We claim that $Q_1, R_1$ are the only points that are fixed by $i$. By \cite[Application 3, $n$ odd]{Ac}, one has $g(\cX/\langle i \rangle ) = g/2$, and the assertion follows via the Riemann-Hurwitz formula.

\end{proof}

\begin{rem}
From the proof of \ref{prop:involution} (2), it is easily seen that when $p \neq 2$, a tame $(g+1)$-curve is hyperelliptic if, and only if, it is birationally equivalent to a generalized Fermat curve, see \cite{AS}. Although results on such curves where stated (and proved) over an algebraic closure of a finite field, all the results regarding the automorphism groups of generalized Fermat curves are still valid over any algebraically closed field. Thence, the full automorphism group of a tame, hyperelliptic $(g+1)$-curve can be found in \cite[Theorem 6.11]{AS}. 
\end{rem}

\begin{proposition}\label{prop:V4}
Let $p \neq 2$, and let $H/G$ be a subgroup of $N/G$ such that $|H/G| = 4$. If $\cX$ is not hyperelliptic, then $H/G$ cannot be isomorphic to the Klein Viergruppe $V_4$.
\end{proposition}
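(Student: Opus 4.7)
The plan is to combine Proposition \ref{prop:involution} (which classifies involutions normalizing $\alpha$ in $\aut(\cX)$) with the two possible embeddings of $V_4$ into $S_4$ coming from the faithful permutation representation $\rho \colon N/G \hookrightarrow S_4$ of Remark \ref{remark:s4}. The key observation is that every non-identity element of a $V_4$-complement of $G$ in $H$ is an involution, and since $\cX$ is non-hyperelliptic, case (2) of Proposition \ref{prop:involution} is ruled out; hence each such involution either commutes with $\alpha$ (type (1)) or inverts it (type (3)).

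First I would produce an actual subgroup $V \leq H$ isomorphic to $V_4$ using the Schur--Zassenhaus theorem: since $|G| = g+1$ is an odd prime (as $p \neq 2$ and $g \geq 3$), its order is coprime to $|H/G| = 4$, so a complement exists and $H = V \ltimes G$ with $V \cong V_4$. Write $V = \{1, i_1, i_2, i_3\}$ with $i_3 = i_1 i_2$. Up to conjugacy in $S_4$, the image $\rho(V)$ is either the normal Klein four-group (three fixed-point-free double transpositions) or a non-normal Klein four-group (two transpositions together with their product, a double transposition). I would handle these two cases separately.

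In the normal case, each $i_k$ acts on $\Omega$ without fixed points, so by Proposition \ref{prop:involution} each $i_k$ is of type (3) and satisfies $i_k \alpha = \alpha^{-1} i_k$. Computing
$$
i_3 \alpha \;=\; i_1 i_2 \alpha \;=\; i_1 \alpha^{-1} i_2 \;=\; \alpha\, i_1 i_2 \;=\; \alpha\, i_3,
$$
and comparing with $i_3 \alpha = \alpha^{-1} i_3$ gives $\alpha^2 = 1$, impossible since $g+1 \geq 5$. In the non-normal case, two of the involutions — say $i_1, i_2$ — act on $\Omega$ as transpositions, each fixing two points of $\Omega$, hence are of type (1) and commute with $\alpha$; their product $i_3$ therefore also commutes with $\alpha$. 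But $i_3$ acts on $\Omega$ without fixed points, so by Proposition \ref{prop:involution} it must be of type (3) and invert $\alpha$, contradicting the previous conclusion.

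The only conceptual obstacle is the initial step of producing the genuine $V_4$-complement: without it, a preimage in $H$ of an order-$2$ element of $H/G$ could a priori have order $2(g+1)$, and Proposition \ref{prop:involution} could not be invoked element-wise. This is dispatched cleanly by Schur--Zassenhaus, exploiting the oddness of $g+1$; afterwards the argument is a short calculation in both conjugacy classes of $V_4 \leq S_4$.
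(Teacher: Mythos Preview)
Your proof is correct. Both your argument and the paper's rest on the same ingredients: a $V_4$-complement $V$ inside $H$ (you make the Schur--Zassenhaus step explicit, which is a nice touch; the paper just writes ``the three involutions in $V_4$'' once the isomorphism type of $H$ is fixed), the trichotomy of Proposition~\ref{prop:involution}, and the exclusion of type~(2) by the non-hyperelliptic hypothesis. The difference is purely in how the case split is organised. The paper splits on the conjugation action $V\to\aut(G)$, i.e.\ on whether $H\cong V_4\times G$ (all three involutions commute with $\alpha$) or $H\cong D_{2(g+1)}$ (exactly one commutes, two invert); in the first case one of the three must act fixed-point-freely on $\Omega$ and is therefore of type~(2), while in the second case the paper tracks the action on $\Omega$ to force $\rho(i_2)=\rho(i_3)$, contradicting faithfulness. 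You instead split on the conjugacy class of $\rho(V)$ inside $S_4$: in the normal-$V_4$ case every involution is fixed-point-free on $\Omega$, hence of type~(3), and the algebraic identity $i_1i_2\alpha=\alpha\,i_1i_2$ contradicts $i_3$ inverting; in the non-normal case the two transpositions are of type~(1) and commute with $\alpha$, forcing $i_3$ to commute as well, while its double-transposition image forces type~(3). Your organisation is a bit more symmetric and purely algebraic at the contradiction step; the paper's is closer to the geometry of the action on $\Omega$. Either way the argument goes through.
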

\begin{proof} By contradiction, let $H/G \cong V_4$. Then either $H \cong V_4 \times G$ or $H \cong D_{2(g+1)}$, where $D_{2(g+1)}$ is a dihedral group of order $4(g+1)$. If the former holds, each one of the three involutions in $V_4$ commutes with $G$, and at least one of them must fix a point outside $\Omega$. By Proposition \ref{prop:involution} (2), $\cX$ is hyperelliptic, a contradiction. 

If the latter holds, then exactly one involution $i_1$ in $V_4$ commutes with $G$, while the other  two involutions $i_2, i_3$ in $V_4$ give rise to two conjugacy classes of involutions in $D_{2(g+1)}$ which cannot commute with $G$, and hence, fix exactly two points each outside $\Omega$. Say that $i_1$ fixes $P_0$ and $P_\infty$. Then $i_2(P_0) = P_\infty$ and $i_3(P_0) = P_\infty$ as $V_4$ is abelian. This implies that (without loss of generality) $i_2(P_1)=P_a$ and $i_3(P_1)=P_a$. Thus, $i_1(P_1)=(i_2i_3)(P_1)=P_1$, contradicting the fact that $i_1$ fixes only two points.

\end{proof}

\begin{corollary}\label{cor:4.14}
If $\cX$ is not hyperelliptic, then $|N/G| \leq 6$, with equality holding if and only if $p =3$, and $\cX$ is isomorphic to the curve $\cG$ with affine equation 
$$
\cG: X^{g+1} = Y^3-Y.
$$
\end{corollary}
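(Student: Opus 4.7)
The plan is to combine the embedding $N/G\hookrightarrow S_4$ from Remark \ref{remark:s4} with the exclusion of $V_4$ provided by Proposition \ref{prop:V4} to force $|N/G|\leq 6$, and then to single out the extremal case via Proposition \ref{prop:c3}. Inspection of the subgroup lattice of $S_4$ shows that the $V_4$-free subgroups are, up to isomorphism, $\{1\}$, $C_2$, $C_3$, $C_4$ and $S_3$, so $|N/G|\leq 6$, with equality forcing $N/G\cong S_3$.

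Suppose $|N/G|=6$. Since $\gcd(3,g+1)=1$ (as $g+1$ is a prime $\geq 5$), the $C_3$ subgroup of $N/G$ lifts to a cyclic subgroup of order $3$ of $N$ normalizing $G$, so Proposition \ref{prop:c3} yields the dichotomy: either $p\neq 3$ and $\cX$ is birationally equivalent to $\cX_1\colon Y^{g+1}+X^3+1=0$, or $p=3$ and $\cX$ is birationally equivalent to $\cG\colon Y^{g+1}=X^3-X$ (which is the curve in the statement after swapping $X$ and $Y$). The hardest step is to rule out the first case. There $N/G$ acts faithfully on $\pi(\Omega)\subset \cX_1/G\cong\mathbb{P}^1$ through its image in $\PGL(2,K)$, and a short computation shows that the four branch points $\{-1,-\zeta_3,-\zeta_3^2,\infty\}$ of $\cX_1\to \cX_1/G$ have cross-ratio $\lambda$ satisfying $\lambda^2-\lambda+1=0$; this is the equianharmonic ($j=0$) configuration, whose $\PGL(2,K)$-stabilizer is isomorphic to $A_4$. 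Since $A_4$ has no subgroup of order $6$ (its only proper nontrivial normal subgroup is $V_4$), this contradicts $N/G\cong S_3$ and rules out $\cX_1$.

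It remains to verify equality for $\cG$. Here $p=3$ and the branch locus of $\cG\to \cG/G$ is $\{0,1,-1,\infty\}=\mathbb{P}^1(\mathbb{F}_3)$, whose $\PGL(2,K)$-stabilizer is $\PGL(2,\mathbb{F}_3)\cong S_4$. The subgroup $\AGL(1,\mathbb{F}_3)\cong S_3\leq S_4$ fixing $\infty$ is generated by $X\mapsto X+1$ and $X\mapsto -X$, and both lift to automorphisms of $\cG$, namely $\beta\colon(X,Y)\mapsto(X+1,Y)$ (using that $(X+1)^3-(X+1)=X^3-X$ in characteristic $3$) and $\iota\colon(X,Y)\mapsto(-X,-Y)$ (an involution since $g+1$ is odd). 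One checks directly that $\beta,\iota$ normalize $G=\langle \alpha\colon(X,Y)\mapsto(X,\lambda Y)\rangle$ and that $\iota\beta\iota^{-1}=\beta^{-1}$, so their images in $N/G$ generate a copy of $S_3$. Together with the upper bound this yields $|N/G|=6$ if and only if $p=3$ and $\cX\cong \cG$, as claimed.
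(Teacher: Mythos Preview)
Your proof is correct and follows the same overall architecture as the paper's: use Remark~\ref{remark:s4} together with Proposition~\ref{prop:V4} to bound $|N/G|\leq 6$, then invoke Proposition~\ref{prop:c3} on an order-$3$ element to reduce the extremal case to the two model curves, and finally separate the cases $p\neq 3$ and $p=3$.

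The genuine difference lies in how you exclude the Fermat-type curve $\cF:\,Y^{g+1}+X^3+1=0$ when $p\neq 3$. The paper simply quotes Kontogeorgis \cite{Ko} to obtain $\aut(\cF)\cong G\times C_3$, forcing $|N/G|=3<6$. You instead argue intrinsically: since $N/G$ acts faithfully on the branch locus $\pi(\Omega)\subset\mathbb{P}^1$ through $\PGL(2,K)$, it embeds in the set-stabilizer of $\{-1,-\zeta_3,-\zeta_3^2,\infty\}$; the cross-ratio satisfies $\lambda^2-\lambda+1=0$, so this stabilizer is $A_4$, which has no subgroup of order~$6$. This is a nice self-contained alternative that avoids the external reference, at the cost of the (easy but unstated) verification that the equianharmonic stabilizer is $A_4$ in characteristic $p\neq 2,3$. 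For the $p=3$ direction your treatment is in fact more complete than the paper's: the paper exhibits only the involution $(x,y)\mapsto(-x,-y)$ and leaves the reader to combine it with the Artin--Schreier translation, whereas you write down both generators $\beta,\iota$, check they normalize $G$, and verify the dihedral relation $\iota\beta\iota^{-1}=\beta^{-1}$ explicitly.
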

\begin{proof} By Proposition \ref{prop:V4}, we immediately get $|N/G| \leq 6$. Let $p \neq 3$. If there exists $\beta \in N$ of order $3$, then by Proposition \ref{prop:c3} (a) $\cX$ is birationally equivalent to a generalized Fermat curve with affine equation
$$
\cF: Y^{g+1}+X^3+1 = 0.
$$
By \cite[Theorem 1]{Ko}, one has $\aut(\cF) \cong G \times \langle \beta \rangle $ as $ 3 \nmid (g+1)$. 

If $p = 3$, by Proposition \ref{prop:c3} (b), $\cX$ is birationally equivalent to the curve

$$
\cG: Y^{g+1} = X^3-X. 
$$
Let $K(\cG) = K(x,y)$. The map $\gamma(x,y) = (-x,-y)$ is an involution in $\aut(K(\cG))$ normalizing $G$, and the assertion follows. 

\end{proof}

\begin{proposition}\label{4malizer}
If $p \neq 2$, assume that there exists an automorphism $\delta \in \aut(\cX)$ of order $4$ normalizing $G$. Then, with notation of Theorem \ref{thm:p = g+1}, we have $r+s = g+1$, $r^2+t^2 \equiv 0 \mod g+1$, $a = 1/2$, $t \neq r,s$ and if $\delta^{-1}\alpha\delta = \alpha^j$, then $j^2+1 \equiv 0 \mod g+1$. 
\end{proposition}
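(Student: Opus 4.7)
My plan is to combine the order-theoretic constraint on $\delta$ with the local analysis of $\alpha$ at the four $\alpha$-fixed points $\Omega=\{P_0,P_1,P_a,P_\infty\}$. As a first step I would show that the image $\bar\delta\in\aut(\cX)/G\hookrightarrow\PGL(2,K)$ has order exactly $4$, so it acts on the $4$-element set $\pi(\Omega)$ as a $4$-cycle and hence $\delta$ permutes $\Omega$ as a $4$-cycle. Indeed $\bar\delta$ cannot be trivial, else $\delta\in G$, which has odd order $g+1$; and it cannot have order $2$, else $\delta^{2}\in G$ would be an involution in an odd-order group.

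I would then compute the local multipliers of $\alpha$ at the points of $\Omega$. For the model $Y^{g+1}=X^{r}(X-1)^{s}(X-a)^{t}$ with $\alpha(x,y)=(x,\lambda y)$, choosing at each $P\in\Omega$ a local uniformizer of the form $y^{u}x^{v}$ shows that $\alpha$ acts on the cotangent space at $P$ by $\lambda^{m_{P}}$ where
\[
m_{P_0}\equiv r^{-1},\ m_{P_1}\equiv s^{-1},\ m_{P_a}\equiv t^{-1},\ m_{P_\infty}\equiv -(r+s+t)^{-1}\pmod{g+1}.
\]
The relation $\delta^{-1}\alpha\delta=\alpha^{j}$ read on the tangent space at any $P$ translates into $m_{\delta(P)}\equiv j\,m_{P}\pmod{g+1}$. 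Iterating this around the $4$-cycle gives $j^{4}\equiv 1$, while substituting the explicit formulas for $m_P$ into the resulting chain of equalities forces
\[
1+j+j^{2}+j^{3}\equiv 0\pmod{g+1}.
\]
Since $g+1$ is prime, this factors as $(j+1)(j^{2}+1)\equiv 0$, so either $j\equiv -1$ or $j^{2}\equiv -1$.

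The key step is to exclude $j\equiv -1$. In that case $\delta^{2}$ centralizes $\alpha$ and is an involution normalizing $G$ with $\langle\delta^{2},\alpha\rangle$ cyclic. Since $\cX$ is non-hyperelliptic, case $(1)$ of Proposition~\ref{prop:involution} would force $\delta^{2}$ to fix exactly two points of $\Omega$; but $\delta^{2}$ acts on $\Omega$ as the square of a $4$-cycle, i.e., a fixed-point-free double transposition, a contradiction. Hence $j^{2}+1\equiv 0\pmod{g+1}$.

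The remaining numerical relations come from fixing, via a relabeling inside the family of Theorem~\ref{thm:p = g+1}, the $4$-cycle on $\Omega$ to be $P_0\to P_\infty\to P_1\to P_a\to P_0$. Then $m_{\delta(P)}\equiv j\,m_P$ combined with $j^{2}\equiv -1$ yields $s\equiv -r$ and $t\equiv jr\pmod{g+1}$; since $1\le r,s\le g$, the former gives $r+s=g+1$, the latter gives $r^{2}+t^{2}\equiv r^{2}(1+j^{2})\equiv 0\pmod{g+1}$, and $t=r$ (resp.\ $t=s$) would force $j\equiv 1$ (resp.\ $j\equiv -1$), both excluded. For $a=1/2$, the M\"obius transformation $\bar\delta(x)=(Ax+B)/(Cx+D)$ sending $0\mapsto\infty$, $\infty\mapsto 1$, $1\mapsto a$, $a\mapsto 0$ is pinned down by $D=0$, $A=C$, $B/C=a-1$, and then the closure condition $\bar\delta(a)=0$ forces $(2a-1)/a=0$, so $a=1/2$. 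The main obstacle is the careful bookkeeping of the local multipliers and the matching between the algebraic conjugation $\delta^{-1}\alpha\delta=\alpha^{j}$ and the induced linear action on tangent spaces; once this is correctly set up and combined with Proposition~\ref{prop:involution}, the remaining computations reduce to routine algebra.
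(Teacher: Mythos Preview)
Your approach is correct and offers a genuine alternative to the paper's argument. The paper proceeds by explicit function-field computations: it fixes a $4$-cycle for $\delta$ on $\Omega$, writes down $\mu(x)$ and $\mu(y)=f(x)y^{g}$ for $\mu=\delta^{2}$ from divisor considerations (exactly as in Proposition~\ref{carattip}), and compares $\mu(y)^{g+1}$ with $\mu\bigl(x^{r}(x-1)^{s}(x-a)^{t}\bigr)$ to force $r+s=g+1$; it then determines $\delta(x)$ from divisors to get $a=1/2$, and finally $\delta(y)=w(x)y^{j}$ to obtain $r^{2}+t^{2}\equiv 0$ and $t\neq r,s$. Your route via the local multipliers $m_{P}$ of $\alpha$ on the cotangent spaces and the transport law $m_{\delta(P)}\equiv j\,m_{P}$ is more invariant: combined with the relation $\sum_{P\in\Omega} m_{P}^{-1}\equiv 0$ (which encodes $r+s+t-(r{+}s{+}t)=0$) it yields $1+j+j^{2}+j^{3}\equiv 0$, and once $j^{2}\equiv -1$ is established the identities $s\equiv -r$ and $t\equiv jr$ drop out immediately. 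Both proofs hinge on the same key step, ruling out $j\equiv -1$ because a commuting involution acting without fixed points on $\Omega$ forces $\cX$ to be hyperelliptic, though you invoke Proposition~\ref{prop:involution} while the paper appeals to the computation in Proposition~\ref{carattip}. The paper's method produces the explicit formulas for $\mu$ and $\delta$ as a byproduct; yours sidesteps the somewhat delicate determination of $\mu(y)$ and $\delta(y)$.

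One small point to tighten: your claim that $\bar\delta$ acts on $\pi(\Omega)$ as a $4$-cycle needs the faithfulness of $N/G\hookrightarrow S_{4}$ from Remark~\ref{remark:s4}, so that an element of order $4$ in $N/G$ is necessarily a $4$-cycle; merely knowing $\bar\delta$ has order $4$ in $\PGL(2,K)$ does not by itself exclude its restriction to $\pi(\Omega)$ being, say, a double transposition.
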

\begin{proof}
First, note that $\delta$ acts transitively on $\Omega=\{P_0,P_1,P_a, P_\infty\}$. Indeed, since $\delta$ normalizes $G$, it has to act on the set of $4$ points below those of $\Omega$ in $\mathbb{P}^1$, and the number of fixed points of a cyclic group in $\mathbb{P}^1$ is $2$. Thus we may assume that $\delta(P_0)=P_a$, $\delta(P_a)=P_1$, $\delta(P_1)=P_\infty$ and $\delta(P_\infty)=P_0$. Set $\mu=\delta^2$. 
 Now $\mu$ normalizes $G =\langle \alpha \rangle$ but it does not commute with $\alpha$, since otherwise it would follow from the proof of  Proposition \ref{carattip} that $\cX$ is hyperelliptic. Thus $\mu \alpha \mu=\alpha^g$. In particular, if $\delta^{-1}\alpha\delta = \alpha^j$, we have $\alpha^{j^2+1}=1$, and then $g+1 | j^2+1$. Arguing as in Proposition \ref{carattip}, we obtain
$$
\mu(x)=\frac{a(x-1)}{x-a} \ \ \ \text{ and } \ \ \ \mu(y)=fy^g, \ \ \ f \in K(x).
$$
Hence
$$
f^{g+1}=\frac{a^{r+t}(a-1)^{s+t}x^{s-rg}(x-1)^{r-sg}}{(x-a)^{r+s+t(g+1)}},
$$
which gives $r+s=g+1$.

Now, $(x)=(g+1)P_0-(g+1)P_\infty$ and $\delta:(P_0,P_\infty)\mapsto (P_a,P_0)$ implies 
$$
(\delta(x))=\left(\frac{x-a}{x}\right),
$$
thus $\delta(x)=c(x-a)x^{-1}$ for some $c \in K$. Analogously, we obtain $\delta(x-a)=c_1(x-1)x^{-1}$ and $\delta(x-1)=c_2x^{-1}$, with $c_1,c_2 \in K$. Comparing these equations, we obtain $a=2^{-1}$ and $c=1$. Once again arguing as in Proposition \ref{carattip} gives $\delta(y)=wy^j$ for some $w \in K(x)$, and a computation leads to $r^2+t^2 \equiv 0 \mod g+1$ and  $t \neq r,s$. 
\end{proof}

\begin{proposition}\label{tamenontame}
For $p >3$, let $\cX$ be non-hyperelliptic and let $P \in \Omega$. Then $\aut(\cX)_P^{(1)}$ is trivial. 
\end{proposition}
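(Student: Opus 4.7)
The plan is to mirror the template of Proposition~\ref{tamenontame1}, adapted to the four-point set $\Omega = \{P_0, P_1, P_a, P_\infty\}$. Assume for contradiction that $S := \aut(\cX)_P^{(1)}$ is nontrivial of order $p^s$, $s \geq 1$. By the structure of ramification groups, $\aut(\cX)_P^{(0)} = S \rtimes C$ with $C$ cyclic of order coprime to $p$; since $p \nmid g+1$ and $G \leq \aut(\cX)_P$, we may assume $G \leq C$. As $C$ normalizes $G$ we have $C \subseteq N := N_{\aut(\cX)}(G)$, and by Proposition~\ref{prop:V4}, Corollary~\ref{cor:4.14}, and the hypothesis $p > 3$, $|N/G| \leq 4$.

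First, I would rule out the possibility that $G$ is normal in $\aut(\cX)_P^{(0)}$. If it were, then $S \subseteq N$, and since $S \cap G = 1$, $S$ would embed into $N/G \leq {\rm S}_4$. For $p \geq 5$ no nontrivial $p$-subgroup fits in ${\rm S}_4$, forcing $S = 1$, a contradiction. Hence $G$ is not normal in $\aut(\cX)_P^{(0)}$, so the number of Sylow $(g+1)$-subgroups satisfies $n_{g+1} \geq g+2$.

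Next I would invoke the quotient argument. Form $\bar{\cX} := \cX/S$ with quotient map $\pi : \cX \to \bar{\cX}$. Once $|S|$ is checked to exceed the relevant HKT threshold, \cite[Theorem 11.78]{hirschfeld-korchmaros-torres2008} yields that $\bar{\cX}$ is rational and $\{P\}$ is the unique short $S$-orbit on $\cX$. The tame cyclic quotient $\bar{C} \cong C$ acts on $\bar{\cX} \simeq \mathbb{P}^1$ as a tame cyclic subgroup of $\PGL(2,K)$, fixing $\pi(P)$ and a unique other point $\bar{Q}$. Each $Q' \in \Omega \setminus \{P\}$ is $G$-fixed, so $\pi(Q')$ is fixed by $\bar{G} \leq \bar{C}$; as $\bar{G}$ and $\bar{C}$ have the same two fixed points on $\mathbb{P}^1$ and $\pi(Q') \neq \pi(P)$, we get $\pi(Q') = \bar{Q}$. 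Thus $\Omega \setminus \{P\} \subset \pi^{-1}(\bar{Q})$, which is a full $S$-orbit of size $p^s$ since $\bar{Q}$ is unramified in $\pi$. Letting $G$ act on this fiber (it has exactly three fixed points and all other orbits of length $g+1$) yields $p^s = 3 + k(g+1)$ for some $k \geq 0$; since $p > 3$, $k \geq 1$, and hence $p^s \geq g + 4$.

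To reach the final contradiction, I would sharpen the Sylow count. Since $N_{\aut(\cX)_P^{(0)}}(G) = N_S(G) \cdot C$, $n_{g+1} = |S|/|N_S(G)|$ is a power of $p$ dividing $p^s$. Writing $B := p^s/n_{g+1}$, both $B$ and $n_{g+1}$ are powers of $p$; the congruences $p^s \equiv 3$ and $n_{g+1} \equiv 1 \pmod{g+1}$ force $B \equiv 3 \pmod{g+1}$. As $p > 3$, $B \neq 3$, so $B \geq g + 4$. Combining, $|S| = B \cdot n_{g+1} \geq (g+4)(g+2)$, in conflict with the bound $|S| \leq 4pg^2/(p-1)^2$ from \cite[Theorem 11.78(iii)]{hirschfeld-korchmaros-torres2008}. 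The main delicacy I expect is twofold: first, confirming $|S|$ exceeds the HKT threshold needed to apply Theorem~11.78 in Step~2; and second, verifying the concluding inequality uniformly over admissible pairs $(p,g)$, with small-$p$ (notably $p = 5$) and small-$g$ cases likely requiring either direct inspection or appeal to other items of \cite[Theorem 11.78]{hirschfeld-korchmaros-torres2008}.
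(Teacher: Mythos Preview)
Your approach is essentially identical to the paper's: assume $S=\aut(\cX)_P^{(1)}$ nontrivial, observe $G$ cannot be normal in $S\rtimes G$ (since $p>3$ prevents $S$ from embedding in ${\rm S}_4$), pass to the quotient $\cX/S$, use that $\bar G$ fixes two points of $\mathbb{P}^1$ to trap $\Omega\setminus\{P\}$ in a single $S$-orbit of size $p^s\equiv 3\pmod{g+1}$, factor $p^s=B\cdot n_{g+1}$ with $B\equiv 3$ and $n_{g+1}\equiv 1\pmod{g+1}$, and finish via \cite[Theorem 11.78(iii)]{hirschfeld-korchmaros-torres2008}.

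The only substantive difference is that your final lower bound $|S|\ge (g+4)(g+2)$ is too weak to conclude uniformly: for $p=5$ one needs $|S|>\tfrac{5}{4}g^2$, and $(g+4)(g+2)\le \tfrac{5}{4}g^2$ once $g\ge 26$, so the difficulty is not at ``small $g$'' as you anticipate but at \emph{large} $g$ with $p=5$. The paper closes this with a one-line parity observation you omitted: since $g+1$ is an odd prime, $g$ is even, hence $g+2$ and $g+4$ are even and cannot be powers of the odd prime $p$. Thus $n_{g+1}\ge 2g+3$ and $B\ge 2g+5$, giving $|S|\ge(2g+5)(2g+3)>4(g+1)^2>\tfrac{4p}{(p-1)^2}g^2$ for all $p\ge 5$. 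The same parity step also immediately yields $|S|\ge n_{g+1}\ge 2g+3>2g+1$, which is exactly the threshold needed to invoke \cite[Theorem 11.78]{hirschfeld-korchmaros-torres2008} for rationality of $\cX/S$ and uniqueness of the short orbit; so both of your flagged delicacies dissolve with this single observation. (One further minor point: the paper first disposes of $p>g+1$ separately via Roquette/Homma before restricting to $p\le g-1$; you should include that reduction as well.)
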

\begin{proof} For $p > g+1$, the claim follows by Theorem \cite{Roq}. Let $p \leq g-1$. By contradiction, let $H= \aut(\cX)_P^{(1)} \rtimes G \leq \aut(\cX)_P^{(0)} $, with $|\aut(\cX)_{P}^{(1)}| = p^r$ for $r \geq 1$. Since $p >3$, $G$ cannot be normal in $H$ by Remark \ref{remark:s4}, whence the number  $n_{g+1}$ of Sylow $(g+1)$-subgroups  in $H$ is such that $n_{g+1} \geq g+2$. As $n_{g+1} \mid p^r$, we see that $n_{g+1} = p^s, s \leq r$, whence $n_{g+1}  = g+2 $ is not possible as $g+2$ is even. Then $n_{g+1} \geq 2(g+1)+1$, which in turn implies $|\aut(\cX)_{P}^{(1)}|> 2g+1$. Then by \cite[Theorem 11.78]{hirschfeld-korchmaros-torres2008}, $\bar{\cX} = \cX/\aut(\cX)_P^{(1)}$ is rational and $\{P\}$ is the unique short orbit of $\aut(\cX)_P^{(1)}$. Denote by $\pi$ the covering $\cX \rightarrow \bar{\cX}$. Then $\bar{G} = H/\aut(\cX)_P^{(1)}$ is a tame cyclic subgroup of $\PGL(2,K)$ fixing two points in $\bar{\cX}$, namely $\pi(P)$ and a point $Q$ with $|\pi^{-1}(Q) | = p^r$. Then $\Omega \setminus\{P\}$ is contained in $\pi^{-1}(Q)$, whence $p^r = c(g+1)+3$ for $c > 0$ (recall that $p >3$). Also, $n_{g+1} = k(g+1)+1 \geq 2(g+1)+1$ divides $p^r$, which implies $p^r = (c_1(g+1)+3)(k(g+1)+1) \geq (c_1(g+1)+3)(2(g+1)+1)$ for an integer $c_1 \geq 0$. Again, $c_1 > 0$ as $p >3$, and $c_1 \geq 2$ as $g+4$ is even. Then 
$$
|\aut(\cX)_{P}^{(1)}| \geq 4(g+1)^2 > 4\frac{p}{(p-1)^2}g^2,
$$
a contradiction to \cite[Theorem 11.78 (iii)]{hirschfeld-korchmaros-torres2008}.

\end{proof}

As an immediate yet crucial corollary of Proposition \ref{tamenontame}, we obtain the following. 
\begin{corollary}\label{biggroup}
Let $p > 3$. If $\aut(\cX)$ has less than $3$ short orbits, then case (d) of Theorem \ref{th:largeaut} holds.
\end{corollary}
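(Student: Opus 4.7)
The plan is to use Proposition \ref{tamenontame} as the single key input and then match against the list in Theorem \ref{th:largeaut}. Under the standing hypotheses of the subsection ($p>3$, $\cX$ non-hyperelliptic), that proposition says $\aut(\cX)_P^{(1)}$ is trivial for every $P\in\Omega$, so in particular the stabilizer $\aut(\cX)_P$ has order coprime to $p$ and the ramification at $P$ in the cover $\cX\to\cX/\aut(\cX)$ is tame.

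First I would note that each $P\in\Omega$ sits in a genuinely short orbit of $\aut(\cX)$: the group $G=\langle\alpha\rangle$ of order $g+1$ already fixes $P$, forcing $|\aut(\cX)_P|\geq g+1>1$ and hence $|\aut(\cX)(P)|<|\aut(\cX)|$. Combining this with the previous paragraph yields the existence of at least one \emph{tame} short orbit of $\aut(\cX)$ on $\cX$.

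Next I would eliminate the incompatible options in Theorem \ref{th:largeaut}. Case (a) is ruled out immediately since it features exactly three short orbits, contradicting the hypothesis of fewer than three. Case (c), which has only one short orbit and that orbit non-tame, is incompatible with the tame short orbit through $\Omega$. Case (b), with two short orbits both non-tame, is excluded for the same reason. Only case (d) remains, which is precisely the claim.

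I do not foresee a real obstacle: Proposition \ref{tamenontame} has already done the technical work, and the rest is a direct matching against the four exceptional cases of Theorem \ref{th:largeaut}. The one small conceptual step is to translate ``$\aut(\cX)_P^{(1)}=1$ for $P\in\Omega$'' into ``there exists a tame short orbit of $\aut(\cX)$'', which is immediate from the definition of the ramification filtration.
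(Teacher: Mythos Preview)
Your argument is correct and is exactly what the paper has in mind: the authors simply state that the corollary is ``an immediate yet crucial corollary of Proposition~\ref{tamenontame}'', and your write-up spells out precisely that immediacy --- the points of $\Omega$ lie in short orbits with tame stabilizers, so among the configurations (a)--(d) only (d) survives the hypothesis of fewer than three short orbits. One minor point worth making explicit (neither you nor the paper does): the list (a)--(d) in Theorem~\ref{th:largeaut} a priori only describes the exceptional cases where Hurwitz's bound fails, so to conclude that the orbit structure \emph{is} that of (d) one should also dismiss the non-exceptional possibilities (e.g.\ $\bar g\geq 1$, or $\bar g=0$ with at most two tame short orbits); these are all ruled out by a one-line Riemann--Hurwitz computation using $e_P\geq g+1$ for $P\in\Omega$ and $g\geq 2$.
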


\begin{theorem}\label{teoremchar0g+1}
For $p \neq 2,3$, let $\aut(\cX)$ have $k \geq 3$ short orbits. Then $G$ is normal in $\aut(\cX)$, unless $g =4$ and $\cX$ is isomorphic to the curve
$$
\mathcal{Q}: Y^5 = X(X-1)^4(X-\ha)^2, 
$$
with $\aut(\mathcal{Q}) \cong {\rm S}_5$. In particular, for $g >4$, one has $|\aut(\cX)| = c(g+1)$ for $c \in \{1,2,3,4\}$. 
\end{theorem}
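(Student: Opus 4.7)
The plan is to argue by contradiction: assume $G$ is not normal in $\aut(\cX)$, so by the Sylow Theorem the number $n_{g+1}$ of Sylow $(g+1)$-subgroups satisfies $n_{g+1}\geq g+2$, whence $|\aut(\cX)|=c(g+1)n_{g+1}$ with $c=|N_{\aut(\cX)}(G)/G|$. The normalizer quotient is bounded in three steps: by Remark~\ref{remark:s4}, $N/G$ embeds into $\mathrm{S}_4$; by Corollary~\ref{cor:4.14}, $|N/G|\leq 6$; and since $\mathrm{S}_4$ has no order-$5$ element, $c\leq 4$. By Proposition~\ref{prop:V4} we also have $N/G\not\cong V_4$. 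Finally, if $N$ contained an order-$3$ element then Proposition~\ref{prop:c3}(a) together with~\cite[Theorem~1]{Ko} would yield $\aut(\cX)\cong G\times C_3$ with $G$ normal; thus $c\in\{1,2,4\}$ under our assumption.

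First, I would prove $\bar g := g(\cX/\aut(\cX))=0$: if $\bar g\geq 1$ and $k\geq 3$, each short orbit contributes at least $|\aut(\cX)|/2$ on the right-hand side of~\eqref{rhso}, giving $2g-2\geq 3|\aut(\cX)|/2 \geq 3(g+1)/2$, a contradiction for every $g\geq 2$. Next, I would describe the $\aut(\cX)$-orbits on $\Omega^\ast:=\bigcup_{G'\in\mathrm{Syl}_{g+1}(\aut(\cX))}\mathrm{Fix}(G')$. Distinct Sylow $(g+1)$-subgroups have disjoint fixed-point sets: a common fixed point would place both into a stabilizer that is tame cyclic by Proposition~\ref{tamenontame} together with~\cite[Theorem~11.74]{hirschfeld-korchmaros-torres2008}, forcing them to coincide. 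Therefore if $\Omega$ splits into $m$ $N$-orbits of sizes $s_1,\ldots,s_m$ with $\sum s_i=4$, then $\Omega^\ast$ splits into $m$ $\aut(\cX)$-orbits of sizes $n_{g+1}s_i$, each with ramification index $c(g+1)/s_i$.

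Plugging each admissible $(c,(s_i))$ into~\eqref{rhso} produces a linear inequality in $n_{g+1}$. The cases $c=1$ (which forces $m=4$), $c=2$ with $(s_i)\in\{(1,1,1,1),(2,1,1)\}$, and $c=4$ with $(s_i)\in\{(1,1,1,1),(2,1,1),(2,2)\}$ all yield $n_{g+1}\leq 1$, contradicting $n_{g+1}\geq g+2$. The two surviving configurations are $c=2$ with $(s_i)=(2,2)$ and $c=4$ with $(s_i)=(4)$; each contributes at most $m\leq 2$ short orbits from $\Omega^\ast$, so $k\geq 3$ demands at least one further short orbit contributing at least $|\aut(\cX)|/2$. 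Substituting gives $n_{g+1}\leq 2(g-1)/(g-3)$, which combined with $n_{g+1}\geq g+2$ forces $g=4$ and $n_5=6$.

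The main obstacle is then to pin down the exception precisely. Solving the Riemann-Hurwitz equation exactly in the surviving configurations reduces to a finite signature search, singling out $c=4$, $(e_1,e_2,e_3)=(5,2,4)$, $|\aut(\cX)|=120$. Numerically plausible higher-genus candidates (most notably $g=6$ with signature $(7,2,3)$ and $|\aut(\cX)|=420$) must be eliminated via Theorem~\ref{thmgenuseven}: a direct enumeration shows that no group in its list can accommodate $n_{g+1}\geq g+2$ together with the remaining constraints. The birational identification of $\cX$ with $\mathcal{Q}$ follows from Theorem~\ref{thm:p = g+1}(a) combined with the rigidity of Proposition~\ref{4malizer}, which in genus $4$ forces $a=\tfrac{1}{2}$, $r+s=5$, and $(r,s,t)=(1,4,2)$. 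An explicit involution swapping the two $N$-orbits of $\Omega$ extends the Frobenius normalizer $C_5\rtimes C_4$ to $\aut(\mathcal{Q})\cong\mathrm{S}_5$; this also absorbs the $c=2$ candidate, which realizes only its $\mathrm{A}_5$ subgroup. Once $\mathcal{Q}$ is excluded, $G$ is normal, so $\aut(\cX)=N$ and $|\aut(\cX)|=c(g+1)$ with $c\in\{1,2,3,4\}$.
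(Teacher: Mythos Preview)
Your overall strategy---classify the possible $N$-orbit partitions of $\Omega$, translate each into $\aut(\cX)$-orbit data on $\Omega^\ast$, and feed the result into Riemann--Hurwitz---is sound and closely parallels the paper's case analysis. The reduction to $c\in\{1,2,4\}$ and the disjointness of the fixed sets of distinct Sylow $(g+1)$-subgroups are correct, and for $c=1$, for $c=2$ with $(2,1,1)$, and for the impossible $c=4$ partitions your inequalities do give $n_{g+1}\le 1$.

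The genuine gap is in the configuration $c=4$, $(s_i)=(4)$. Here $m=1$, so $k\ge 3$ forces at least \emph{two} further short orbits, and if each contributes only $|\aut(\cX)|/2$ (ramification index $2$), the Riemann--Hurwitz inequality reads
\[
2g-2 \;\ge\; -8(g+1)n_{g+1}+4g\,n_{g+1}+4(g+1)n_{g+1}\;=\;-4n_{g+1},
\]
which is vacuous and certainly does not yield $n_{g+1}\le 2(g-1)/(g-3)$. (That bound is exactly what one gets in the $c=2$, $(2,2)$ case, so it seems you conflated the two.) As a consequence, your argument does not exclude the $g=6$ signature $(7,2,3)$ with $|\aut(\cX)|=420$, which you later admit as ``numerically plausible''; your appeal to Theorem~\ref{thmgenuseven} at that point is only a one-line gesture and would require a nontrivial case-by-case elimination to be a proof.

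The paper closes this gap differently: for $|N|=4(g+1)$ it invokes Accola's lemma to show that the order-$4$ element $\delta$ fixes exactly two points outside $\Omega$, so one of the extra short orbits has ramification index divisible by $4$. This sharpens the Hurwitz estimate to $|\aut(\cX)|\le 40(g-1)$, which combined with $|\aut(\cX)|\ge 4(g+1)(g+2)$ kills every $g>4$ at once---in particular $g=6$---without any group-theoretic enumeration. Your proposal would become complete if you inserted this step (or an equivalent one forcing $e_2\ge 4$) into the $c=4$, $(4)$ analysis. Finally, in the $c=2$, $g=4$ branch the paper does not ``absorb'' the $\mathrm{A}_5$ candidate into $\mathrm{S}_5$; it \emph{excludes} it via Theorem~\ref{thmgenuseven}, since for $\cX\cong\mathcal{Q}$ the normalizer of a Sylow $5$-subgroup in $\mathrm{S}_5$ already has order $20$, so $c=4$ there.
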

\begin{proof} We will make frequently use of the following consequence of Proposition \ref{tamenontame}: the stabilizer of a point in $\Omega$ has size $b(g+1)$, for $b\in \{1,2,3\}$. 

By contradiction, let $G$ be non-normal in $\aut(\cX)$. Then $|\aut(\cX)| \geq (g+1)(g+2)$. Also, $G$ cannot be self-normalizing. To see this, suppose not. Then no two of the points in $\Omega$ can lie in the same $\aut(\cX)$-short orbit. Without loss of generality, assume that there exists $\tau \in \aut(\cX)$ such that $\tau(P_0)=P_1$. Thus, $\tau^{-1}\alpha \tau(P_0)=\tau^{-1}(P_1)=P_0$. Hence, $\tau^{-1} \alpha \tau \in \aut(\cX)_{P_0}$, and then $\tau^{-1} \alpha \tau=\alpha$, that is, $G$ is not self-normalizing. Now, let $k$ be the number of $\aut(\cX)$-short orbits; then  $k \geq 4$, and $|\aut(\cX)| \leq 12(g-1)$ by the proof of \cite[Theorem 11.56]{hirschfeld-korchmaros-torres2008}.  This gives a contradiction for $g >7$. We are then left with the cases $g = 4, |\aut(\cX)| = 30$ and $g = 6, |\aut(\cX)| = 56$. In both cases, $\aut(\cX)$ is a non-abelian group of order $q(q+1)$ where $q = (g+1)$ is a prime. Since $G$ is not normal in $\aut(\cX)$, then by the Sylow theorem we have that a group of order $g+2$ is normal in $\aut(\cX)$ and it is the unique  Sylow $2$-subgroup of $\aut(\cX)$ (that is, $g+1$ is a Mersenne prime). Then $g = 6$,  $k = 4$ (if $k \geq 5$, then $|\aut(\cX)| \leq 20$, a contradiction) and $|\aut(\cX)_{P_i}| = 7$.
A computation via the Riemann-Hurwitz formula gives a contradiction.  

This means that  $\cX$ must be (birationally equivalent to) one of the curves described in Propositions \ref{prop:c3}, \ref{prop:involution}, or \ref{4malizer}. 

The case of Proposition \ref{prop:c3} is immediately settled, as in this case, $\cX$ is isomorphic to the curve $\cF$ (since $p \neq 3$) and $|\aut(\cX)| = 3(g+1)$ by \cite[Theorem 1]{Ko}. 

Next, let the number $k$ of $\aut(\cX)$-short orbits equals $3$.  

First, assume $|N_{\aut(\cX)}(G)| = 2(g+1)$. Then either case (1) or (3) as in Proposition \ref{prop:involution} occurs. If case (1) occurs, then $\{P_0\}, \{P_\infty\}$ and $\{P_1,P_a\}$ are contained in distinct $\aut(\cX)$-short orbits. As the stabilizer of each of such points has either size $2(g+1)$ or $(g+1)$, a computation via the Riemann-Hurwitz formula shows that $\aut(\cX) = 2(g+1)$. If case (3) occurs, we may assume that $\{P_1,P_a\}$ and $\{P_0, P_\infty\}$ are contained in two distinct short orbits. Let $Q$ be a point in the third short orbit; then $e_Q = 2l$ for an integer $l \geq 1$. We may write the Riemann-Hurwitz formula applied to $\cX\rightarrow \cX/\aut(\cX)$ as 
$$
2g-2 \geq |\aut(\cX)| \Biggl(\frac{1}{2}- \frac{2}{g+1}\Biggr).
$$
A computation gives $|\aut(\cX)| \leq 12(g+1)$, whence $g \in \{4,6,10\}$. For $g= 10$, we must have $|\aut(\cX)| =  12(g+1)=132$, a contradiction since a group of order 11 should be normal in such a group (as $12$ is not a prime power). For $g =6$, let $n_7$ be the number of Sylow $7$-subgroups of $\aut(\cX)$. Then $n_7 \geq 8$, $n_7 \equiv 1 \mod 7$ and $n_7 = |\aut(\cX) : N_{\aut(\cX)}(G)|$ by the Sylow Theorem. However, this yields $|\aut(\cX)| \geq 112 > 84 =12(g+1)$, a contradiction.

If $g=4$, let $n_5$ be the number of Sylow $5$-subgroups of $\aut(\cX)$. Then $n_5 \geq 6$, $n_5 \equiv 1 \mod 4$ and $n_5= |\aut(\cX) : N_{\aut(\cX)}(G)|$ by the Sylow Theorem. This imply $\aut(\cX) = 60$, and then $\aut(\cX) \cong {\rm Alt}_5$, the alternating group on $5$ letters, contradicting Theorem \ref{thmgenuseven}. 

Finally, let $|N_{\aut(\cX)}(G)| = 4(g+1)$; we use the same notation as in Proposition \ref{4malizer}. Note that $N_{\aut(\cX)}(G)$ is a non-abelian group of order $4(g+1)$, $g \equiv 0 \mod 4$ and  $N_{\aut(\cX)}(G)$ is a Frobenius group. Since $p \neq 2$, we may apply Accola's result (see \cite[Lemma, page 477]{Ac}). Then $\delta$ fixes exactly two points outside $\Omega$, and the ramification index $e$ of a point in $\Omega$ in the cover $\cX\ \rightarrow \cX/\aut(\cX)$ equals $g+1$. By the proof of the Hurwitz bound, (see item (III) in the proof of \cite[Theorem 11.56]{hirschfeld-korchmaros-torres2008}), we have $|\aut(\cX)| \leq 40(g-1)$. Since by the Sylow Theorem  $|\aut(\cX)| \geq 4(g+1)(g+2)$, this gives a contradiction whenever $g > 4$.

Things being so, let $g = 4$; since $p \neq 2,3$, $\aut(\cX)$ is tame. Recall that a tame automorphism group  of a genus $4$-curve has size at most $120$. By Proposition \ref{4malizer},  $\cX$ is one of the following curves:
$$
\mathcal{Q}_1: \:\:Y^5 = X(X-1)^4(X-\ha)^2,
$$
$$
\mathcal{Q}_2: \:\:Y^5 = X(X-1)^4(X-\ha)^3,
$$
$$
\mathcal{Q}_3: \:\:Y^5 = X^2(X-1)^3(X-\ha), 
$$
$$
\: \:\mathcal{Q}_4: \:\:Y^5 = X^2(X-1)^3(X-\ha)^4. 
$$
It can be checked that all these curves are birationally equivalent among themselves. For instance, the map 
$$
\varphi (x,y)= \Bigl(-(x-1), \frac{y^3}{(x-1)^2(x-\ha)}\Bigr)
$$ 
is a birational map between $K(\mathcal{Q}_1)$ and $K(\mathcal{Q}_3)$; the other birational equivalences  are obtained in a similar fashion. Since $\aut(\cX)$ is tame and large, (that is, $\aut(\cX) > 4(g-1)$), we may look at the tables in \cite[Table 4]{maagard}, where large tame automorphism groups of curves of genus $4$ as well as their orbit-behavior are exhibited. We see that either $|\aut(\cX)| =20$ or $|\aut(\cX)| =120$, and if the latter holds, $\aut(\cX) \cong {\rm S}_5$. We can exclude the former possibility since in this case, $\aut(\cX)$ would be isomorphic to a dihedral group $D_{10}$ of order $20$, which does not contain any cyclic group of order 4. Hence, $\aut(\cX) \cong {\rm S}_5$, and our assertion follows.

\end{proof}

\begin{rem}\label{rem:finalmente}
The results in this subsection hold true whenever we replace $\aut(\cX)$ by an automorphism group $G'$ of $\cX$ containing $G$. In particular, by Corollary \ref{biggroup} and Theorem \ref{teoremchar0g+1}, if $G$ is not normal in $\aut(\cX)$, then $\aut(\cX)$ is non-tame and case (d) of Theorem \ref{th:largeaut} holds.
\end{rem}

\begin{lemma}\label{lematecnico}
For $p >3$, let $\cX$ be non-hyperelliptic $(g+1)$-curve of genus $g>2$ such that $G$ is not normal in $\aut(\cX)$. Then the following hold.

\begin{itemize}
\item[(i)] There is a cyclic automorphism $\tau \in \aut(\cX)$ of order $4$ normalizing $\alpha$ and acting transitively on $\Omega=\{P_0,P_1,P_a,P_\infty\}$.
\item[(ii)] $\aut(\cX)_{P_i}=G$ for all $i=0,1,a,\infty$.
\item[(iii)] There is no involution in $\aut(\cX)$ that commutes with $\alpha$. In particular, the group $\langle \tau,\alpha\rangle$ is neither isomorphic to $\mathbb{Z}_{4(g+1)}$ nor to $D_{2(g+1)}$.
\item[(iv)] $|\aut(\cX)|=4(g+1)(d(g+1)+1)$ for some positive integer $d$.
\end{itemize}
\end{lemma}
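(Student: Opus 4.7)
The strategy is to invoke Remark~\ref{rem:finalmente}: since $G$ is not normal in $\aut(\cX)$, the group $\aut(\cX)$ is non-tame and falls into case~(d) of Theorem~\ref{th:largeaut}, so it has exactly two short orbits, one tame $\Lambda_1$ and one non-tame $\Lambda_2$. Proposition~\ref{tamenontame} forces $\Omega \subset \Lambda_1$, and for every $P \in \Lambda_1$ the stabilizer $\aut(\cX)_P$ is tame and therefore cyclic, containing $G$ as its unique subgroup of order $g+1$ (since $g+1$ is prime). The key observation is that if $\sigma \in \aut(\cX)$ satisfies $\sigma(P_i) = P_j$ for $P_i, P_j \in \Omega$, then $\sigma G \sigma^{-1}$ is a subgroup of order $g+1$ of the cyclic group $\aut(\cX)_{P_j}$, hence equals $G$; so $\sigma \in N := N_{\aut(\cX)}(G)$. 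Transitivity of $\aut(\cX)$ on $\Lambda_1 \supset \Omega$ then yields transitivity of $N$ on $\Omega$. By Remark~\ref{remark:s4}, $N/G$ embeds in ${\rm S}_4$ as a transitive subgroup, and by Proposition~\ref{prop:V4} it contains no Klein-four subgroup, which eliminates $V_4, D_4, {\rm Alt}_4, {\rm S}_4$ and leaves only $\mathbb{Z}_4$. Thus $N/G \cong \mathbb{Z}_4$; lifting a generator and, if necessary, replacing it by its $(g+1)$-th power (permissible since $g+1$ is an odd prime coprime to $4$), I obtain $\tau \in N$ of exact order $4$ normalizing $\alpha$ and permuting $\Omega$ cyclically, which proves (i).

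For (ii), the stabilizer $\aut(\cX)_{P_i}$ is cyclic, contains $G$, and normalizes $G$, so $\aut(\cX)_{P_i}/G$ is a cyclic subgroup of $N/G \cong \mathbb{Z}_4$. If this subgroup had order $2$ or $4$, it would contain $\tau^2 G$, so some $\tau^2\alpha^k$ would fix $P_i$; but $\alpha^k$ fixes $P_i$, while $\tau^2$ acts on $\Omega$ as a product of two disjoint transpositions (the cyclic permutation $P_0 \mapsto P_a \mapsto P_1 \mapsto P_\infty \mapsto P_0$ from Proposition~\ref{4malizer} gives $\tau^2:\, P_0 \leftrightarrow P_1,\, P_a \leftrightarrow P_\infty$), so $\tau^2$ has no fixed point in $\Omega$ --- a contradiction. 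Hence $\aut(\cX)_{P_i} = G$. For (iii), any involution $j \in \aut(\cX)$ centralizing $\alpha$ would lie in $N$ and project to the unique order-$2$ element $\tau^2 G$ of $N/G$; writing $j = \tau^2 \alpha^k$ and imposing $j\alpha = \alpha j$ forces $\tau^2 \alpha = \alpha \tau^2$, contradicting the relation $\tau^2 \alpha \tau^{-2} = \alpha^g \neq \alpha$ established in Proposition~\ref{4malizer}. The same obstruction rules out $\langle\tau,\alpha\rangle \cong \mathbb{Z}_{4(g+1)}$ (in which $\tau^2$ would commute with $\alpha$) as well as $\langle\tau,\alpha\rangle \cong D_{2(g+1)}$ (whose centre, namely the subgroup generated by the unique element of order~$2$ in the cyclic subgroup of order $2(g+1)$, would contain an involution commuting with every element, hence with $\alpha$).

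Finally, (iv) follows by a Sylow count on $\Lambda_1$. By (ii) applied to each conjugate of $G$, every $P \in \Lambda_1$ has $\aut(\cX)_P$ equal to some Sylow $(g+1)$-subgroup, and conversely each Sylow $(g+1)$-subgroup fixes exactly $4$ points of $\cX$ (its own ``$\Omega$''), all lying in $\Lambda_1$. Two distinct Sylows cannot share a fixed point, else that point's stabilizer would properly contain a Sylow, violating (ii). Hence $\Lambda_1$ decomposes as the disjoint union of the fixed-point sets of the $n$ Sylow $(g+1)$-subgroups, so $|\Lambda_1| = 4n$; Sylow's theorem yields $n \equiv 1 \pmod{g+1}$, and non-normality of $G$ forces $n \geq 2$, so $n = d(g+1)+1$ with $d \geq 1$. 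Combining with $|\aut(\cX)| = (g+1) \cdot |\Lambda_1|$ gives the formula $|\aut(\cX)| = 4(g+1)(d(g+1)+1)$.

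The main technical hurdle, in my view, is the identification $N/G \cong \mathbb{Z}_4$: this requires both the transitivity upgrade from $\aut(\cX)$ on $\Lambda_1$ to $N$ on $\Omega$ (via the uniqueness of $G$ inside cyclic tame stabilizers) and the delicate use of Proposition~\ref{prop:V4} to exclude every transitive subgroup of ${\rm S}_4$ containing a Klein-four. Once this structural fact is secured, parts (ii)--(iv) reduce to direct bookkeeping with the explicit cyclic action of $\tau$ on $\Omega$ and elementary Sylow counting.
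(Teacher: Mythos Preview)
Your proof is correct and tracks the paper's approach closely: both set up the two short orbits via Remark~\ref{rem:finalmente}, place $\Omega$ inside the tame orbit $\Lambda_1$ using Proposition~\ref{tamenontame}, and show that any element of $\aut(\cX)$ moving one point of $\Omega$ to another must normalize $G$. Your identification of $N/G \cong \mathbb{Z}_4$ via the list of transitive subgroups of ${\rm S}_4$ lacking a Klein-four is tidier than the paper's case-by-case elimination of orders $2$ and $3$. The only substantive difference is in part~(iii): the paper argues geometrically that an involution $\mu$ commuting with $\alpha$ would, by Riemann--Hurwitz, fix exactly two points, both outside $\Omega$, and then $\alpha$ (commuting with $\mu$) would have to fix those two points as well---contradicting the fact that the fixed-point set of $\alpha$ is exactly $\Omega$; you instead reduce any such $\mu$ to $\tau^2\alpha^k$ and invoke the relation $\tau^2\alpha\tau^{-2}=\alpha^{g}$ from Proposition~\ref{4malizer}. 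For (iv), your partition of $\Lambda_1$ into the $4$-element fixed-point sets of the Sylow $(g+1)$-subgroups is a pleasant structural alternative to the paper's bare congruence argument ($4\mid|\Lambda_1|$ and $|\Lambda_1|\equiv 4\pmod{g+1}$); both yield the same count.
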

\begin{proof}
By Remark \ref{rem:finalmente}, $\aut(\cX)$ has two short orbits on $\cX$: one tame, say $\Lambda_1$, and one non-tame, say $\Lambda_2$. Moreover, Proposition \ref{tamenontame} implies $\Omega=\{P_0,P_1,P_a,P_\infty\} \subset \Lambda_1$. In particular, there must exist $\tau \in \aut(\cX)$ such that $\tau(P_0)=P_1$. Thus $\tau^{-1}\alpha \tau(P_0)=\tau^{-1}(P_1)=P_0$. Hence $\tau^{-1} \alpha \tau \in \aut(\cX)_{P_0}$, and then $\tau^{-1} \alpha \tau=\alpha$. Thus the order of $\tau$ is at most $4$. By Proposition \ref{prop:c3} and \cite[Theorem 1]{Ko} we have that $\tau$ has order $2$ or $4$. If $\tau$ is of order $2$, then there exists $\eta \in \aut(\cX)$ such that $\eta(P_0)=P_a$ and $\eta$ normalizes $\alpha$. Thus $\langle \tau,\eta \rangle$ is isomorphic to a subgroup of the symmetric group ${\rm S}_4$ without an element of order $3$, and then  $V_4 \ltimes G \leq \aut(\cX)$, contradicting Proposition \ref{prop:V4}. Hence $\tau$ has order $4$, and a similar argument shows that the action of $\tau$ on $\Omega$ must be transitive. This proves (i). 

To prove (ii), assume that $|\aut(\cX)_{P_i}| > g+1$. By \cite[Theorem 11.79]{hirschfeld-korchmaros-torres2008} we have $|\aut(\cX)_{P_i}|=k(g+1)$, with $k=2,3$ or $4$. But $\aut(\cX)_{P_i}$ is cyclic, and then $k \neq 3$, as shown in the previous paragraph. So let $\sigma \in \aut(\cX)_{P_i}$ of order $2(g+1)$, with $\sigma^2=\alpha$. Hence $\tau^2$ normalizes $\sigma$ and $\langle \tau^{2},\sigma\rangle/G$ is a non-cyclic subgroup of $N/G$ of order $4$, contradicting  Proposition \ref{prop:V4}.

Now suppose that $\mu$ is an involution commuting with $\alpha$. Set $\bar{\cX}=\cX/\langle \mu \rangle$ and $\bar{g}=g(\cX/\langle \mu \rangle)$. Via the Riemann-Hurwitz formula applied to the cover $\cX \rightarrow \bar{\cX}$, we conclude that $\mu$ has exactly two fixed points, both outside $\Omega$. But $\alpha \mu=\mu \alpha$, and therefore $\alpha$ must act on the set of fixed points of $\mu$. Hence $\alpha$ must fix such points, a contradiction since the set of fixed points of $\alpha$ is $\Omega$. This concludes the proof of (iii). 

Finally, since $\Omega \subset \Lambda_1$, then (i) and (ii) imply that $4$ divides $|\Lambda_1|$ and $|\Lambda_1|\equiv 4 \mod g+1$. Thus $|\Lambda_1|=4d(g+1)+4$ for some positive integer $d$. This combined with the Orbit-Stabilizer Theorem finishes the proof.
\end{proof}

\begin{lemma}\label{oddcorefree}
For $p >3$, let $\cX$ be non-hyperelliptic $(g+1)$-curve of genus $g>4$ such that $G$ is not normal in $\aut(\cX)$.  
 If $O(\aut(\cX))$ is non-trivial, then $G \leq O(\aut(\cX))$. 
\end{lemma}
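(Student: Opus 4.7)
The plan is to argue by contradiction. Suppose $N := O(\aut(\cX))$ is nontrivial but $G \not\leq N$. Since $|G| = g+1$ is an odd prime and $|N| \geq 3$ is odd, primality of $|G|$ forces $G \cap N = \{1\}$. Hence the image $\bar{G}$ of $G$ in $\aut(\bar{\cX})$, where $\bar{\cX} := \cX/N$ has genus $\bar{g}$, is cyclic of order $g+1$. Note that $g > 4$ together with $g+1$ prime forces $g$ even and $g \geq 6$, so $g+1 \geq 7$.

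First I would apply Homma's Theorem \ref{homma1} to $\bar{G}$ acting on $\bar{\cX}$, which (for $\bar{g} \geq 2$) forces $\bar{g} \geq g$ or $\bar{g} = g/2$. The case $\bar{g} \geq g$ is ruled out by Riemann-Hurwitz: $2g-2 \geq |N|(2\bar{g} - 2) \geq 3(2g-2)$ gives $g \leq 1$. The case $\bar{g} = g/2$ similarly yields $2g - 2 \geq 3(g-2)$, i.e., $g \leq 4$. For $\bar{g} = 1$, since the image of $\Omega$ in $\bar{\cX}$ is a non-empty set of $\bar{G}$-fixed points on the elliptic curve $\bar{\cX}$, the group $\bar{G}$ embeds in a point-stabilizer of $\aut(\bar{\cX})$; in characteristic $p > 3$ such a stabilizer has order at most $6$, contradicting $|\bar{G}| = g+1 \geq 7$.

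The main obstacle is the case $\bar{g} = 0$, which I would handle as follows. Here $\bar{\cX} \simeq \mathbb{P}^1$ and $\bar{G}$ is a tame cyclic subgroup of $\PGL(2,K)$ with exactly two fixed points $A, B$. I would invoke the order-$4$ automorphism $\tau$ normalizing $G$ from Lemma \ref{lematecnico}(i). Since $\langle \tau \rangle$ is a $2$-group and $|N|$ is odd, $\langle \tau \rangle \cap N = \{1\}$; moreover, since the Galois cover $\cX \to \bar{\cX}$ has Galois group $N$, the kernel of $\aut(\cX) \to \PGL(2,K)$ equals $N$, so the image $\bar{\tau} \in \PGL(2,K)$ has order exactly $4$ and normalizes $\bar{G}$. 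The normalizer of $\bar{G}$ in $\PGL(2,K)$ splits as $K^* \rtimes \langle \sigma_0 \rangle$, with every element in the non-trivial coset (those swapping $A, B$) of order $2$; hence $\bar{\tau}$ must lie in the centralizing diagonal $K^*$, fixing $A, B$ and commuting with $\bar{G}$. From $\tau\alpha\tau^{-1} = \alpha^j$ with $j^2 + 1 \equiv 0 \pmod{g+1}$ provided by Proposition \ref{4malizer}, I then get $\bar{\alpha}^{j-1} = 1$, i.e., $\alpha^{j-1} \in N \cap G$. Since $j \equiv 1 \pmod{g+1}$ would force $g+1 \mid 2$, the element $\alpha^{j-1}$ is nontrivial, so $G \leq N$, contradicting $G \cap N = \{1\}$. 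This completes the case analysis and proves the lemma.
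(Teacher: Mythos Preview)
Your proof is correct and follows essentially the same skeleton as the paper's: assume $G\not\le N:=O(\aut(\cX))$, pass to the quotient $\bar\cX=\cX/N$, and rule out each possible value of $\bar g$ using Homma's theorem, the elliptic one-point stabilizer bound, and the structure of $\langle\tau,\alpha\rangle$ coming from Lemma~\ref{lematecnico}. The cases $\bar g\ge 2$ and $\bar g=1$ match the paper's treatment almost verbatim.

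The only genuine divergence is in the $\bar g=0$ case. The paper observes that $H\cap\langle\tau,\alpha\rangle=\{1\}$, so $\langle\tau,\alpha\rangle$ embeds in $\PGL(2,K)$; then it cites Lemma~\ref{lematecnico}(iii) (the group is neither cyclic nor dihedral of order $4(g+1)$) together with Dickson's classification of finite subgroups of $\PGL(2,K)$ to obtain the contradiction. You instead compute the normalizer of $\bar G$ in $\PGL(2,K)$ directly, note that any element of order $4$ there must centralize $\bar G$, and combine this with the congruence $j^2+1\equiv 0\pmod{g+1}$ from Proposition~\ref{4malizer} to force $g+1\mid 2$. Your route is slightly more self-contained (no appeal to the full subgroup classification of $\PGL(2,K)$), while the paper's is shorter once Lemma~\ref{lematecnico}(iii) is available; the two arguments are really reformulations of the same obstruction, since Lemma~\ref{lematecnico}(iii) and Proposition~\ref{4malizer} both encode that $\tau$ acts on $G$ by an automorphism of exact order $4$.
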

\begin{proof}
Assume that there exists $H \triangleleft \aut(\cX)$ of odd order, with $\alpha \notin H$. In particular, $H \cap \langle \tau,\alpha \rangle=\{1\}$. Set $\tilde{\cX}=\cX/H$ and $\tilde{g}=g(\tilde{\cX})$. If $\tilde{g}=0$, then $\tilde{\cX}$ is a rational curve with an automorphism subgroup isomorphic to $\langle \tau,\alpha \rangle$. But this is impossible by Lemma \ref{lematecnico} (iii) and \cite[Theorem 1]{vmhaup}.

Suppose $\tilde{g}=1$. Then $G$ must be isomorphic to a subgroup of a non-trivial one-point stabilizer in the automorphism group of an elliptic curve, a contradiction to \cite[Theorem 11.94]{hirschfeld-korchmaros-torres2008}.

Therefore $\tilde{g} \geq 2$. In this case, we obtain that $\tilde{\cX}$ is a $(2\tilde{g}+1)$-curve, with $g+1=2\tilde{g}+1$. Denoting $d=|H|$, the Riemann-Hurwitz formula applied to the cover $\cX \rightarrow \tilde{\cX}$ provides that $d=2$, contradicting the fact that $H$ has odd order. 
\end{proof}

\begin{proposition}\label{oddcoretrivial}
For $p >3$, let $\cX$ be non-hyperelliptic $(g+1)$-curve of genus $g>4$ such $G$ is not normal in $\aut(\cX)$. Then $O(\aut(\cX))$ is trivial. 
\end{proposition}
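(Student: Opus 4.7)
My plan is to argue by contradiction: assume $H := O(\aut(\cX))$ is non-trivial, so that $G\leq H$ by Lemma \ref{oddcorefree}. Since $G$ is a Sylow $(g+1)$-subgroup of $\aut(\cX)$ (by the unnamed lemma at the start of this subsection, $(g+1)^2\nmid|\aut(\cX)|$), it is also a Sylow $(g+1)$-subgroup of $H$, and the Frattini argument applied to $H$ normal in $\aut(\cX)$ yields
$$
\aut(\cX) \;=\; H\cdot N_{\aut(\cX)}(G) \;=\; H\cdot \langle\tau,\alpha\rangle,
$$
where the second equality follows from Lemma \ref{lematecnico}(i)--(iv) together with Proposition \ref{prop:V4} (which force $N_{\aut(\cX)}(G)/G\cong \mathbb{Z}_4$, so that $|N_{\aut(\cX)}(G)|=4(g+1)$). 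The subgroups of $\langle\tau,\alpha\rangle$ containing $G$ correspond to subgroups of the cyclic quotient $\langle\tau,\alpha\rangle/G\cong \mathbb{Z}_4$, hence have orders $g+1$, $2(g+1)$, $4(g+1)$. Since $g+1$ is odd (as $g>4$ and $g+1$ is prime) and $|H|$ is odd, only the smallest of these can lie inside $H$; thus $H\cap\langle\tau,\alpha\rangle = G$, $[\aut(\cX):H]=4$, and by Lemma \ref{lematecnico}(iv) one has
$$
|H| \;=\; (g+1)(d(g+1)+1) \;\geq\; (g+1)(g+2).
$$

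Next I would show that $P_0,P_1,P_a,P_\infty$ lie in four distinct $H$-orbits on $\cX$. Indeed, if $h(P_i)=P_j$ for some $h\in H$, then $h\alpha h^{-1}$ fixes $P_j$, so by Lemma \ref{lematecnico}(ii) it lies in $G$; this forces $h\in N_{\aut(\cX)}(G)\cap H = G$, and hence $i=j$. Each of these four orbits has stabilizer $G$, hence size $|H|/(g+1)$, and is tamely ramified in the cover $\pi:\cX\to \cX/H$ with different exponent $g$ at every point.

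The contradiction now comes from the Riemann-Hurwitz formula applied to $\pi$. Writing $\tilde{g}$ for the genus of $\cX/H$ and keeping only the contribution of the four orbits above, one obtains
$$
2g-2 \;\geq\; |H|(2\tilde{g}-2) + \frac{4|H|g}{g+1}.
$$
If $\tilde{g}=0$, this simplifies to $(g-1)|H|\leq (g-1)(g+1)$, i.e., $|H|\leq g+1$, contradicting the bound $|H|\geq(g+1)(g+2)$; if $\tilde{g}\geq 1$, then $2g-2\geq 4|H|g/(g+1)\geq 4g(g+2)$, which is impossible for $g>1$. Hence $O(\aut(\cX))$ must be trivial.

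The main obstacle will be the Frattini step: one has to combine the precise description of $N_{\aut(\cX)}(G)$ provided by Lemma \ref{lematecnico} (and the exclusion of a $V_4$ subgroup in $N/G$ coming from Proposition \ref{prop:V4}) with the odd-order hypothesis on $H$ and the primality of $g+1$ in order to pin the index $[\aut(\cX):H]$ down to exactly $4$. Once this is secured, the distinctness of the four $H$-orbits on $\Omega$ and the Riemann-Hurwitz computation are essentially mechanical.
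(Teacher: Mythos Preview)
Your proof is correct and takes a genuinely different route from the paper's. The paper argues by splitting on whether $G$ is normal in $O(\aut(\cX))$: if it is, then $G$ is the unique Sylow $(g+1)$-subgroup of $O(\aut(\cX))$, hence characteristic there, hence normal in $\aut(\cX)$---an immediate contradiction (the paper phrases this via Proposition~\ref{prop:c3} and Kontogeorgis, but the characteristic-subgroup observation already suffices); if $G$ is not normal in $O(\aut(\cX))$, then by Remark~\ref{rem:finalmente} one may apply Lemma~\ref{lematecnico} with $O(\aut(\cX))$ in place of $\aut(\cX)$, producing an element of order $4$ inside a group of odd order. Your approach instead avoids the case split entirely: the Frattini argument together with the precise description $N_{\aut(\cX)}(G)=\langle\tau,\alpha\rangle$ of order $4(g+1)$ pins down $[\aut(\cX):H]=4$, and then the orbit analysis (using Lemma~\ref{lematecnico}(ii) to force the four points of $\Omega$ into distinct $H$-orbits) feeds directly into Riemann--Hurwitz. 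This is a clean, self-contained argument that does not invoke \cite{Ko} and makes fuller use of Lemma~\ref{lematecnico}(iv); the paper's route is shorter but leans more on the transfer principle of Remark~\ref{rem:finalmente}. Both are valid, and your identification of $N/G\cong\mathbb{Z}_4$ (from the fact that the only subgroup of ${\rm S}_4$ containing a $4$-cycle but no Klein four-group is $\mathbb{Z}_4$ itself) is exactly the ingredient needed to make the Frattini step work.
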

\begin{proof} By contradiction, let  $O(\aut(\cX))$ be non-trivial. By Lemma \ref{oddcorefree}, $G \leq O(\aut(\cX))$. Then either $G$ is normal in $O(\aut(\cX))$ or not. If the former holds, then the only possibility is $|O(\aut(\cX))| = 3(g+1)$ and $\cX$ is isomorphic to the curve $\cF$ as in Proposition \ref{prop:c3}, a contradiction since by \cite[Theorem 1]{Ko}, $G$ is normal in $\aut(\cF)$. If the latter holds, then by Remark \ref{rem:finalmente} $O(\aut(\cX))$ is non-tame and we may apply Lemma \ref{lematecnico}(iii), a contradiction. 
\end{proof}

\begin{theorem}\label{hurwitzholds}
For $p >3$, let $\cX$ be non-hyperelliptic $(g+1)$-curve of genus $g>4$. Then $G$ is normal in  $\aut(\cX)$.
\end{theorem}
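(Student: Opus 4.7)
The plan is to argue by contradiction, supposing $G$ is not normal in $H := \aut(\cX)$. Lemma \ref{lematecnico} then supplies the two pillars of the argument: $|H|=4(g+1)(d(g+1)+1)$ with $d \ge 1$, and no involution of $H$ commutes with $\alpha$. Since $g+1 \ge 7$ is an odd prime (recall $g>4$), the genus $g$ is even, and Proposition \ref{oddcoretrivial} gives $O(H)=1$, so Theorem \ref{thmgenuseven} applies to $H$ with $H/O(H)=H$.

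I would then dispose of the cases of Theorem \ref{thmgenuseven} in turn. Cases (i)--(ii) are immediate since $|H|$ is divisible both by $4$ (as $\tau$ has order $4$) and by the odd prime $g+1 \ge 7$. Cases (vii)--(xi) reduce to a short finite check: the possible orders of $H$ are in $\{48, 2520, 5040, 7920\}$, and the conjunction of the constraints that the prime $g+1\ge 7$ divides $|H|$ and that $|H|=4(g+1)(d(g+1)+1)$ for a positive integer $d$ leaves no admissible $g$. In case (iii) with $q$ odd, the characteristic central involution $-I$ of $\SL(2,q) \trianglelefteq H$ is therefore central in $H$ and in particular commutes with $\alpha$, contradicting Lemma \ref{lematecnico}(iii); the even-$q$ variant of (iii) is absorbed into case (iv), since then $\SL(2,q) \cong \PSL(2,q)$.

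The heart of the argument is the exclusion of cases (iv), (v), (vi), where $H$ contains a normal simple subgroup $S$ isomorphic to $\PSL(2,q)$, $\PSL(3,q)$, or $\PSU(3,q)$ respectively. My goal in each of these cases is to exhibit an involution of $H$ centralizing $\alpha$, which then contradicts Lemma \ref{lematecnico}(iii). Assuming first that $\alpha \in S$ (which holds whenever the odd prime $g+1$ fails to divide the small index $|H:S|$), $C_S(\alpha)$ is a cyclic maximal torus whose order belongs to a short list: one of $(q\pm 1)/\gcd(2,q-1)$ for $S=\PSL(2,q)$, or a divisor of $q^3-1$, respectively $q^3+1$, for $S=\PSL(3,q)$, respectively $\PSU(3,q)$. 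For each congruence class of $q$ modulo $4$, a direct check shows that at least one of the admissible tori containing $\alpha$ has even order, producing the required commuting involution. The main obstacle, which I would address last, is the degenerate subcase in which $\alpha$ is unipotent (i.e.\ $g+1$ equals the defining characteristic of $S$), so that $C_S(\alpha)$ is an elementary abelian group of odd order: here I would derive the contradiction instead by combining the rigid size formula $|H|=4(g+1)(d(g+1)+1)$ with Riemann--Hurwitz applied to the cover $\cX \to \cX/H$, exploiting the fact that, by Remark \ref{rem:finalmente}, $H$ has exactly one tame and one non-tame short orbit.
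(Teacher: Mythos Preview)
Your overall strategy---reduce to Theorem \ref{thmgenuseven} with $O(H)=1$, dispatch cases (i), (ii), (vii)--(xi) by order considerations, and case (iii) for odd $q$ via the central involution of $\SL(2,q)$---matches the paper and is sound. The gap lies in your treatment of cases (iv)--(vi) and of case (iii) with $q$ even.

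Your key claim, that for semisimple $\alpha$ ``at least one of the admissible tori containing $\alpha$ has even order,'' is false. In $\PSL(2,q)$ a noncentral semisimple element of odd prime order lies in a \emph{unique} maximal torus (namely its centraliser), so you cannot choose which one: if $q\equiv 1\pmod 4$ and $g+1$ divides $(q+1)/2$ but not $(q-1)/2$, then $\alpha$ sits in the non-split torus of odd order $(q+1)/2$, and there is no involution of $S$ commuting with $\alpha$. The symmetric obstruction occurs for $q\equiv 3\pmod 4$ with $g+1\mid (q-1)/2$. Worse, for $q$ even \emph{both} maximal tori of $\PSL(2,q)$ have odd order $q\pm 1$, so your absorption of case (iii) with even $q$ into case (iv) leads nowhere; the paper handles that subcase instead by observing that $\PSL(2,2^w)$ contains no cyclic subgroup of order $4$, contradicting Lemma \ref{lematecnico}(i). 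The same parity obstruction arises for the Singer tori of $\PSL(3,q)$ and $\PSU(3,q)$, whose orders divide $q^2+q+1$ and $q^2-q+1$ respectively and are therefore always odd. Thus the ``degenerate'' unipotent subcase you single out at the end is by no means the only one in which $C_S(\alpha)$ has odd order.

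The paper's route through (iv)--(vi) is quite different and does not rely on producing a commuting involution. It first disposes of $\alpha\notin S$ by a size comparison with the Stichtenoth bound; then, assuming $\alpha\in S$, it compares $|S|$ against $8g^3$ and invokes Theorem \ref{henn} to force $\cX$ to be Hermitian, which is impossible since the Hermitian curve is not a $(g+1)$-curve. Only a few small values of $q$ survive these inequalities, and those are eliminated one by one using the known subgroup lattice of $\PSL(2,q)$ together with Lemma \ref{lematecnico}(i) (the existence of $\tau$ of order $4$ normalising $G$). To repair your argument you would need either an alternative contradiction in each odd-torus subcase---and here $\tau$, rather than a commuting involution, is the natural tool---or to fall back on the paper's size-bound method.
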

\begin{proof}
Since $g$ is even, the group structure of $\aut(\cX)$ is one of those given in Theorem \ref{thmgenuseven}. By contradiction, let $G$ be non-normal in  $\aut(\cX)$. Then \ref{lematecnico} applies. Also, by  Proposition \ref{oddcoretrivial}, $O(\aut(\cX))$  is trivial. We proceed by a case-by-case analysis based on Theorem \ref{thmgenuseven}. 
\begin{itemize}
\item By Lemma \ref{lematecnico} (i) $\aut(\cX)$ has even order, so (i) cannot occur. 

\item We clearly can rule out case (ii) as well, since $g+1$ divides $|\aut(\cX)|$. 

\item Assume that the commutator subgroup of $\aut(\cX)$ is isomorphic to $\SL(2,q)$ with $q \geq 5$. Suppose that $\alpha \notin \SL(2,q)$. An argument similar to the one used in the proof of Lemma \ref{oddcorefree} gives that $\cX/\SL(2,q)$ is rational. Let $\tau \in \aut(\cX)$ as in Lemma \ref{lematecnico}(i). We claim that $\tau \in \SL(2,q)$. Indeed, suppose that $\tau \not\in \SL(2,q)$. Then since $\aut(\cX)/\SL(2,q)$ is abelian, we have $\alpha^{-1}\tau^{-1}\alpha \tau \in \SL(2,q)$. But $\tau^{-1}\alpha \tau=\alpha^{j}$ for some $j$, which gives $\alpha^{j-1} \in \SL(2,q)$, thus $j=1$. In turn, this implies that $\tau$ commutes with $\alpha$, contradicting Lemma \ref{lematecnico}(iii). Hence $P_0,P_1,P_a$, and $P_\infty$ belong to the same orbit of $\SL(2,q)$, say $\Gamma_1$. Denote by $\tilde{\alpha} \in \aut(\cX/\SL(2,q))$ the automorphism induced by $\alpha$ and let $Q \in \cX/\SL(2,q)$ the point below $\Gamma_1$. Then $Q$ is fixed by $\tilde{\alpha}$. Since $\tilde{\alpha}$ fixes another point $R \in \cX/\SL(2,q)$, we have that the orbit $\Gamma_2$ lying over $R$ is preserved by $\alpha$. Thus $g+1$ divides $|\Gamma_2|$, as $\alpha$ fixes no point outside $\Omega$. On the other hand, it follows from Lemma \ref{lematecnico}(iv) that 
$$
|\SL(2,q)| \text{ divides } 4d(g+1)+4,
$$    
and then $r|\Gamma_2|=4d(g+1)+4$ for some $r$. Therefore $(g+1) | 4$, a contradiction.

Now assume that $\alpha \in \SL(2,q)$. If $q$ is odd, then $\SL(2,q)$ has a central involution, which is not possible in our case. Suppose then $q=2^w$ for some positive integer $w$, and so $\SL(2,q)=\PSL(2,q)$. Then $g+1$ divides $|\PSL(2,2^w)|=2^w(2^{2w}-1)$, which gives $|\PSL(2,2^w)|>84(g-1)$ since we are assuming $g>4$. Therefore, since all the results obtained in this section for $\aut(\cX)$ hold for subgroups of $\aut(\cX)$ containing $\alpha$, we conclude from Lemma \ref{lematecnico}(i) that there is a cyclic group of order $4$ in $\PSL(2,2^w)$. But this contradicts Dickson's classification of subgroups of $\PSL(2,q)$, see \cite[Theorem 4]{vmhaup}. Hence (iii) of Theorem \ref{thmgenuseven} cannot occur.

\item Suppose that $\PSL(2,q) \leq \aut(\cX) \leq {\rm P\Gamma L}(2,q)$, with $q \geq 3$. If $q=\ell^h$ for a prime $\ell$, recall that
$$
{\rm P\Gamma L}(2,q) \cong \PSL(2,q) \rtimes \gal(\mathbb{F}_q/\mathbb{F}_\ell),
$$
and then $|{\rm P\Gamma L}(2,q)|=\frac{hq(q^2-1)}{2}$. If $\alpha \notin \PSL(2,q)$, then $(g+1) |h$, which gives $q=\ell^{(g+1)c}$ for some positive integer $c$. In particular, $q>\ell(g+1) \geq 6(g+1)$, whence $|\aut(\cX)|>108g^4$, which is impossible by \cite[Hauptsatz]{St1973}.

Hence assume that $\alpha \in \PSL(2,q)$. Since $|\PSL(2,q)|=\frac{q(q^2-1)}{2}$, we have that $g+1$ divides either $q$, $q-1$ or $q+1$. If $(g+1) | q$, then $q=(g+1)^s$ for some positive integer $s$. Note that $s>1$ implies $|\aut(cX)| \geq g^6/2$, a contradiction by  \cite[Hauptsatz]{St1973} and \cite[Theorem 11.127]{hirschfeld-korchmaros-torres2008}. So $s=1$ and $g+1=q$. From the structure of $\PSL(2,q)$, there is an automorphism of order $g/2$ normalizing $\alpha$, and then $g \leq 8$. However, $g \neq 8$ since $g+1$ is prime and $g=6$ contradicts Proposition \ref{prop:c3}.

If $(g+1) | q-1$, we have $q-1=k(g+1)$ for some positive integer $k$. If $k$ is odd, then $q$ is even and so $\alpha \in \PSL(2,2^w)$ for some $w$. But we have already seen that such a case can be dismissed. For $k \geq 4$ we obtain $|\aut(\cX)|>16g^3>8g^3$, and \cite[Theorem 11.127]{hirschfeld-korchmaros-torres2008} implies that $\cX$ is isomorphic to the Hermitian curve, which is impossible as such a curve is not a $g+1$-curve. So we are left with the case $k=2$, that is, $2(g+1)=q-1$. If $\aut(\cX)>\PSL(2,q)$, then again $|\aut(\cX)|>8g^3$ gives a contradiction. Thus $\aut(\cX)=\PSL(2,q)$. From Lemma \ref{lematecnico}(i) there is $\tau \in \PSL(2,q)$ of order $4$ normalizing $\alpha$, and $\langle \tau, \alpha \rangle \leq \PSL(2,q)$ is a subgroup of order $2(q-1)$. However,  by \cite[Theorem 4]{vmhaup} there is no such subgroup in $\PSL(2,q)$.

The case $(g+1) | q+1$ can be excluded in the same way. Hence we can rule out (iv) of Theorem \ref{thmgenuseven}.

\item If $\PSL(3,q) \leq \aut(\cX) \leq {\rm P\Gamma L}(3,q)$ with $q \equiv 3 \mod 4$, using the same argument of the previous item, we would have $|\aut(\cX)|>g^8$ if $\alpha \notin \PSL(3,q)$, which is impossible, and $|\aut(\cX)|>8g^3$ if $\alpha \in \PSL(3,q)$ (which also leads us to a contradiction), except for $q=3$ and $g=12$. Since $|\PSL(3,q)|=\frac{q^3(q^3-1)(q^2-1)}{\gcd(3,q-1)}$, we obtain $|\PSL(3,3)|=2^4\cdot2^3\cdot13$, and then $p \leq 3$. This rules out (v) of Theorem \ref{thmgenuseven}. In the same way, with no exception, one may exclude case (vi), since  $|\PSU(3,q)|=\frac{q^3(q^3+1)(q^2-1)}{\gcd(3,q+1)}$.
\item Suppose $\aut(\cX)\cong \Alt_7$. As $|{\rm Alt}_7| = 2520 = 2^3\cdot3^2\cdot5\cdot7$, it follows that $ g+1 =7$, a contradiction as $2520 > 8\cdot 6^3=8g^3$. Note that this argument also excludes (x) of Theorem \ref{thmgenuseven}.
\item In case $\aut(\cX)\cong \rm{M}_{11}$, since $|\rm{M}_{11}| =  2^4\cdot 3^2 \cdot5\cdot 11 = 7920$, we have $ g+1 =11$. Such case cannot occur as the normalizer in $\rm{M}_{11}$ of a Sylow $11$-subgroup is  the semi-direct product $C_{11} \rtimes C_5$ where  $C_5$ is a cyclic $5$-group (see \cite{m11}), a contradiction to Remark \ref{remark:s4}. 
\item Both groups in cases (ix) and (xi) have order $48$, so these cases can be excluded immediately. 
\end{itemize}
\end{proof}

{\bf Proof of Theorem \ref{resumog+1}.}  An immediate consequence of  Theorems \ref{teoremchar0g+1} and \ref{hurwitzholds} and Corollary \ref{cor:4.14}. 
\qed

\subsection{Automorphism groups of wild $(g+1)$-curves}

In this subsection, we turn our attention to the automorphism group of wild non-hyperelliptic $(g+1)$-curves. By Theorem \ref{thm:p = g+1}, we know that a wild non-hyperelliptic $(g+1)$-curve admits a plane model 
$$
 \cZ_{d,e,\ell}: Y^{g+1}-Y = X^3+dX^2+eX+\ell,
$$
for $d,e,\ell \in K$. Since this model is given by separated polynomials, we may apply the results in \cite[Section 12.1]{hirschfeld-korchmaros-torres2008} and \cite{bonini}. Before doing so, it is useful to establish the different isomorphism classes of wild $(g+1)$-curves.

\begin{proposition}\label{isowild}
Let $B(X) = X^3+dX^2+eX+\ell$. Then the wild $(g+1)$-curve with affine equation $ Y^{g+1}-Y = B(X)$ is isomorphic to one of the following curves:
\begin{itemize}
\item $\cC :  Y^{g+1}-Y = X^3$;
\item $\cD: Y^{g+1}-Y = X(X-1)^2$;
\item $\cE_\lambda: Y^{g+1}-Y = X(X-1)(X-\lambda)$, for $\lambda \in K\setminus\{0,1\}$. 
\end{itemize}
\end{proposition}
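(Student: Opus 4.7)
The plan is to reduce the defining equation of $\cX$ to a canonical shape via a sequence of birational coordinate changes (each realizing an isomorphism of $\cX$), and then verify that each of $\cC$, $\cD$, $\cE_\lambda$ attains the same canonical shape under the analogous reduction. Because $g>2$ and $g+1$ is prime, $p=g+1\geq 5$, so $3$ is invertible in $K$. Two families of substitutions preserve both the Artin-Schreier form $Y^p-Y=(\text{cubic polynomial in }X)$ and induce isomorphisms of $\cX$: the affine translation $X \mapsto X+b$ for $b\in K$, which replaces $B(X)$ by $B(X+b)$; and the translation $Y \mapsto Y+\mu$ for $\mu\in K$, which replaces $B$ by $B + \mu - \mu^p$. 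Since $K$ is algebraically closed, the Artin-Schreier map $\mu \mapsto \mu^p-\mu$ is surjective, so the latter move realizes an arbitrary constant shift of $B$.

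First I would use $X \mapsto X - d/3$ to depress the cubic, producing $B(X) = X^3 + AX + C$, and then apply a constant shift $Y \mapsto Y+\mu$ with $\mu^p - \mu = C$ to eliminate $C$. The resulting normalized equation is
\[
Y^p - Y = X^3 + AX,\qquad A\in K,
\]
so every wild $(g+1)$-curve with a cubic polynomial right-hand side is isomorphic to a curve in this one-parameter family.

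Next, apply the same normalization to the three target curves. The curve $\cC$ is already in normalized form with $A=0$. For $\cD: X(X-1)^2 = X^3-2X^2+X$, the substitution $X\mapsto X+2/3$ followed by a constant shift yields $A = -1/3$. For $\cE_\lambda: X(X-1)(X-\lambda) = X^3 - (1+\lambda)X^2+\lambda X$, the substitution $X\mapsto X+(1+\lambda)/3$ and a constant shift yield $A = -(\lambda^2-\lambda+1)/3$. The proof concludes by case analysis on $A$: if $A=0$ then $\cX\cong\cC$; if $A=-1/3$ then $\cX\cong\cD$; otherwise, the quadratic $\lambda^2-\lambda+1 = -3A$ admits a root $\lambda\in K$ (since $K$ is algebraically closed), and the assumption $A\neq -1/3$ forces $\lambda\notin\{0,1\}$ (as $\lambda\in\{0,1\}$ would give $\lambda^2-\lambda+1=1$, hence $A=-1/3$), so $\cX\cong\cE_\lambda$ with $\lambda\in K\setminus\{0,1\}$ as required.

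No step should present a significant obstacle; the proof is essentially change-of-coordinates bookkeeping, with the only mild subtlety being that one must restrict to substitutions preserving both the polynomial nature of the right-hand side and the distinguished pole at $X=\infty$. It is worth noting that the three classes listed in the proposition are not claimed to be disjoint: for instance, when $\lambda^2-\lambda+1=0$ (two such $\lambda$ exist in $K\setminus\{0,1\}$ since $p\neq 2,3$), one has $\cE_\lambda \cong \cC$, which is fully consistent with the proposition's ``one of'' phrasing.
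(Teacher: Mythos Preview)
Your proof is correct and takes a genuinely different route from the paper's. The paper proceeds by a case analysis on the root multiplicity pattern of $B(X)$: a triple root yields $\cC$ via a translation in $X$; a simple and a double root yield $\cD$ after moving them to $0$ and $1$; three distinct roots yield either $\cC$ (in the special case $B(X)=X^3-\ell$, via a translation in $Y$) or $\cE_\lambda$ (otherwise, by sending two roots to $0,1$ and calling the third $\lambda$). You instead normalize every such curve to the one-parameter form $Y^p-Y=X^3+AX$ using only translations in $X$ and $Y$, compute the resulting $A$ for each of $\cC$, $\cD$, $\cE_\lambda$, and then match. Your approach is more uniform and avoids the root-structure case split; it also sidesteps a point the paper leaves implicit, namely that moving roots to $0,1$ by a general affine change $X\mapsto aX+b$ with $a\neq 1$ introduces a leading coefficient $a^3$ on the right-hand side, whereas your pure translations preserve monicity automatically. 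The paper's approach, in return, makes the geometric interpretation of the three target families (as the three possible root-multiplicity types of a cubic) more visible.
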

\begin{proof} First, let $B(X)$ have only one root in $K$, that is, $B(X) = (X-a)^3$, for $a \in K$. A linear substitution in $X$ provides the required isomorphism between the curve with affine equation $Y^{g+1}+Y = B(X)$ and $\cC$. 

Next, let $B(X)$ have two distinct roots $x_0,x_1$ in $K$. Since $\PGL(2,K)$ is sharply $3$-transitive on $\mathbb{P}^1$, we may assume that $x_0 =0, x_1 =1$, whence the curve with affine equation $Y^{g+1}+Y = B(X)$ is isomorphic to the curve $\cD$. 

Finally, let $B(X)$ have three distinct roots $x_0,x_1,x_2$.  We distinguish two cases. First, let $B(X) = X^3-\ell$, for $\ell \in K$. Then the curve with affine equation $Y^{g+1}+Y = X^3-\ell$ is isomorphic to $\cC$ via the map $(x,y) \mapsto (x,y+m)$ where $m^p-m = \ell$.  If $B(X) \neq  X^3-\ell$, that is, there is no cyclic automorphism group of order three permuting $x_0,x_1,x_2$, we may assume that $x_0 = 0, x_1 =1$, again by the $3$-transitivity of $\PGL(2,K)$. Since $P_\infty$, (the common pole of $x,y$) has already been chosen, we have to let $x_2 =\lambda$ for $\lambda \in K\setminus\{0,1\}$, thus obtaining the equation of $\cE_\lambda$, and we are done.
\end{proof}

Thus, we may obtain the main result of this section.

\begin{theorem}
The automorphism group $\aut(\cZ_{d,e,\ell})$ of a wild $(g+1)$-curve fixes $P_\infty$, unless $g+1 \equiv -1 \mod 3$ and $\cZ_{d,e,\ell}$ is projectively equivalent to the curve 

$$
\cC : Y^{g+1}-Y = X^3, \: \mbox{with} \: |\aut(\cC_1)| = 3g(g+1)(g+2). 
$$

Further, if $g \equiv 0 \mod 3$, then $|\aut(\cC)| = 3g(g+1)$. In the remaining cases, $|\aut(\cZ_{d,e,\ell})| \in \{g+1,2(g+1),3(g+1)\}$. 

\end{theorem}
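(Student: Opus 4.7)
The plan is to invoke Proposition \ref{isowild} to restrict attention to the three representative curves $\cC$, $\cD$, and $\cE_\lambda$, and then to analyze each via the theory of Artin-Schreier covers with separated variables developed in \cite[Section 12.1]{hirschfeld-korchmaros-torres2008} and \cite{bonini}. A uniform starting point is that $(x)_\infty = 3P_\infty$ and $(y)_\infty = (g+1)P_\infty$, so the Weierstrass semigroup $H(P_\infty)$ contains $\langle 3, g+1\rangle$; since $g+1$ is prime with $g>2$, a direct gap count forces $H(P_\infty) = \langle 3, g+1\rangle$.

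For $\cD$ and $\cE_\lambda$, where $B(X)$ has at least two distinct roots, I would show that $P_\infty$ is the unique point of the curve with Weierstrass semigroup equal to $\langle 3, g+1\rangle$. Any other such point $Q$ would produce a rational subfield $K(x') \subsetneq K(\cX)$ with $Q$ totally ramified as an Artin-Schreier place; this in turn would force a $\PGL(2,K)$-symmetry permuting $Q$ with $P_\infty$ and preserving the affine branch locus $\{0,1\}$ or $\{0,1,\lambda\}$, which a direct check rules out. Hence $\aut(\cZ_{d,e,\ell})$ fixes $P_\infty$, the group $G$ is normal, and $\aut(\cZ_{d,e,\ell})/G$ embeds into the stabilizer of the affine branch set in $\PGL(2,K)$; enumerating these stabilizers together with their lifts to the Artin-Schreier model yields $|\aut(\cZ_{d,e,\ell})| \in \{g+1, 2(g+1), 3(g+1)\}$.

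For $\cC$, the polynomial $B(X) = X^3$ has a triple root at $X=0$, so the point $P_0$ above $X=0$ shares its Weierstrass semigroup with $P_\infty$ and $\aut(\cC)$ may swap the two. I would construct automorphisms explicitly: the translations $(x,y)\mapsto(x,y+\eta)$ with $\eta\in\mathbb{F}_{g+1}$ realize $G$, the map $(x,y)\mapsto(\zeta x,y)$ with $\zeta^3=1$ extends $G$ by a $C_3$, and combining these with translations of $x$ inside the additive group of $K$ (together with a matching correction in $y$) yields a group of order $3g(g+1)$ fixing $P_\infty$. When $3\mid g+2$, an extra M\"obius-type involution swapping $P_0$ and $P_\infty$, together with its conjugates, enlarges this to a group of order $3g(g+1)(g+2)$.

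I expect the main obstacle to be the matching upper bound for $|\aut(\cC)|$ in both sub-cases. I would obtain it via the Riemann-Hurwitz formula applied to the quotient $\cC\to\cC/\aut(\cC)$ combined with Stichtenoth's bound \cite[Theorem 11.78]{hirschfeld-korchmaros-torres2008} on the wild inertia at $P_\infty$, or, alternatively, by direct appeal to the classification of automorphism groups of curves $Y^p-Y = f(X)$ with $\deg f = 3$ worked out in \cite{bonini}. A technical subtlety is that, when $P_\infty$ is moved by $\aut(\cC)$, the one-point stabilizer $\aut(\cC)_{P_\infty}$ is a proper subgroup of $\aut(\cC)$ of index equal to the size of the orbit of $P_\infty$, and this index must be shown to equal exactly $g+2$ in the exceptional case $3\mid g+2$.
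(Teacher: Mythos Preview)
Your overall plan---reduce via Proposition \ref{isowild} and then treat $\cC$, $\cD$, $\cE_\lambda$ separately using the separated-variables machinery of \cite{bonini}---matches the paper's strategy. The paper, however, takes a significant shortcut you do not: it invokes \cite[Theorem 11.12]{hirschfeld-korchmaros-torres2008} directly, which both decides when $\aut(\cZ_{d,e,\ell})$ fixes $P_\infty$ and, in the exceptional case $\cC$ with $g+1\equiv -1\pmod 3$, immediately yields a normal $C_3$ with $\aut(\cC)/C_3\cong\PGL(2,g+1)$, hence $|\aut(\cC)|=3g(g+1)(g+2)$. For $\cD$ and $\cE_\lambda$ the paper then quotes \cite[Remark 3.5, Proposition 3.4]{bonini} to pin down the tame cyclic complement $H$, rather than arguing via uniqueness of the Weierstrass semigroup at $P_\infty$. (Incidentally, you have the pole divisors reversed: $(y)_\infty=3P_\infty$ and $(x)_\infty=(g+1)P_\infty$, though this does not affect the semigroup $\langle 3,g+1\rangle$.)

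Your treatment of $\cC$ contains genuine errors. First, the curve $Y^{g+1}-Y=X^3$ admits no nontrivial translations $x\mapsto x+c$: since $p=g+1>3$, the difference $(x+c)^3-x^3=3cx^2+3c^2x+c^3$ has degree $2$ in $x$ and cannot be written as $h^p-h$ for any $h\in K[x]$, so no ``matching correction in $y$'' exists. The factor $g=p-1$ in $|\aut(\cC)_{P_\infty}|$ comes instead from \emph{scalings} $x\mapsto ax$, $y\mapsto a^{3}y$ with $a^{3(p-1)}=1$. Second, there is no single point ``$P_0$ above $X=0$'': the fibre over $X=0$ consists of $g+1$ unramified points, none of which has Weierstrass semigroup $\langle 3,g+1\rangle$. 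In the exceptional case the orbit of $P_\infty$ has size $g+2$ and is governed by the $\PGL(2,g+1)$-action coming from \cite[Theorem 11.12]{hirschfeld-korchmaros-torres2008}, not a two-point swap $\{P_0,P_\infty\}$. Consequently your explicit construction does not produce the claimed group, and your closing index argument (that the orbit of $P_\infty$ has size exactly $g+2$) rests on a false description of that orbit.
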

\begin{proof} 
Let $K(x,y) = K(\cZ_{d,e,\ell})$. Also, let $P_\infty$ be the common pole of $x$ and $y$ in $K(\cZ_{d,e,\ell})$. By \cite[Theorem 11.12]{hirschfeld-korchmaros-torres2008}, $\aut(\cZ_{d,e,\ell})$ fixes $P_\infty$ unless $\cZ_{d,e,\ell}$ is projectively equivalent either to the curve $\cC = \cZ_{0,0,0}$ with affine equation 

$$
\cC: Y^{g+1}-Y = X^3,
$$

with $g+1 \equiv -1 \mod 3$, or to an Hermitian curve. As the latter case is easily discarded, we may infer that $\aut(\cZ_{d,e,\ell}) = \aut(\cZ_{d,e,\ell})^{(0)}_{P_\infty}$ unless $g \equiv 1 \mod 3$, and $B(X)$ has exactly one root in $K$. If this is the case, always by \cite[Theorem 11.12]{hirschfeld-korchmaros-torres2008}, $\aut(\cZ_{d,e,\ell})$ contains a normal subgroup $C_3$ of order $3$ such that $\aut(\cZ_{d,e,\ell})/C_3 \cong \PGL(2,g+1)$. 
If $g  \equiv 0 \mod 3$, then by \cite[Theorem 3.2 (ii)]{bonini}, then $\aut(\cC) = G  \rtimes H$, where $H$ is a cyclic tame group with $|H| = 3g$. 

Next, assume that $\cZ_{d,e,\ell}$ is not isomorphic to $\cC$. By Proposition \ref{isowild}, then $\cZ_{d,e,\ell}$ is isomorphic either to $\cD$ or $\cE_\lambda$. Also, in both of these cases, $\aut(\cZ_{d,e,\ell}) =  G  \rtimes H$, where $H$ is a cyclic tame group whose size we need to determine. If the former holds, then by \cite[Remark 3.5]{bonini}, then the order of  $H$ is either $1$ or $2$. Let $H = \langle \gamma \rangle$;  by \cite[Proposition 3.4]{bonini}, $\gamma(x) = bx+c$, for $b, c \in K$, and $\gamma(B(x)) = aB(x)$, for $a \in \mathbb{F}_{(g+1)}^*$. A computation gives $b=1, c = 0$, whence $H$ is trivial.

If the latter holds, that is, $\cZ_{d,e,\ell}$ is isomorphic to $\cE_\lambda$, we first observe that $H$ must act on the set $\{0,1,\lambda\}$, and it is then isomorphic to a cyclic subgroup of $S_3$, whence $|H| \in \{1,2,3\}$. Again, for a generator $\gamma$ of $H$, we have $\gamma(x) = bx+c$, for $b, c \in K$, and $\gamma(B(x)) = aB(x)$, for $a \in \mathbb{F}_{(g+1)}^*$. A computation shows that $b =-1$ if $|H| =2$, or $b$ is a primitive $3$-rd root of the unity if $|H| = 3$. If the former holds, then $c = \lambda$ if $\lambda =2$, $c =0$ if $\lambda = -1$, $c = 1$ if $\lambda = 2^{-1}$. If the latter holds, then  $\lambda$ must be a primitive root of $-1$. 
\end{proof}

\section{Some remarks on $g$ and $(g-1)$-curves}\label{lower}
In this section, we give some partial results and some remarks regarding the classification and the  determination of the full automorphism group of $g$ and $(g-1)$-curves.

\subsection{On $g$-curves}
In this subsection, we focus on $g$-curves. The following result is obtained. 
\begin{proposition}
Let $\cX$ be a $g$-curve, and let  $\alpha \in \aut(\cX)$ of order $g$ with $G = \langle \alpha\rangle$.
Then one of the following holds. 
\begin{enumerate}
\item[{\rm (i)}] $g(\cX/G ) = 1$, with $\rho(\alpha) =2$ if $g \neq p$ and $\rho(\alpha) = 1$ if $g = p$;
\item[{\rm (ii)}] $g(\cX/G ) = 0$, with $g = 3$, $\rho(\alpha) =5$ if $g \neq p$ and $\rho(\alpha) = 2$ if $g = p$. 
\end{enumerate}
\end{proposition}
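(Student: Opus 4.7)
The plan is to apply the Riemann--Hurwitz genus formula (\ref{rhg}) directly to the Galois cover $\pi\colon\cX\to\cX/G$, whose degree is the prime $g$, and to split into the tame case $g\neq p$ and the wild case $g=p$. In both cases the primality of $|G|$ forces every non-trivial stabilizer to coincide with $G$, so the short orbits of $G$ are exactly the fixed points of $\alpha$; thus $\rho(\alpha)$ equals the number of ramified places of $\pi$.

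For the tame case, each ramified point has different exponent $g-1$, and Riemann--Hurwitz becomes
\[
2g-2 \;=\; g(2\bar g-2)+\rho(\alpha)(g-1).
\]
Rewriting as $\rho(\alpha)(g-1) = 2g-2-g(2\bar g-2)\geq 0$ immediately rules out $\bar g\geq 2$. Substituting $\bar g=1$ yields $\rho(\alpha)=2$, matching (i); substituting $\bar g=0$ yields
\[
\rho(\alpha)\;=\;\frac{4g-2}{g-1}\;=\;4+\frac{2}{g-1},
\]
so $g-1$ must divide $2$, and since we are in the interesting range $g\geq 3$ this forces $g=3$ with $\rho(\alpha)=5$, matching (ii).

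For the wild case $g=p$, I would use that $G$ is a $p$-group and hence $G_P^{(0)}=G_P^{(1)}=G$ at every fixed point, by \cite[Theorem 11.74]{hirschfeld-korchmaros-torres2008}. Because $G\cong C_p$ is cyclic of prime order, the filtration has a single break: there is an integer $h_P\geq 1$ with $G_P^{(0)}=\cdots=G_P^{(h_P)}=G$ and $G_P^{(h_P+1)}=\{1\}$, so $d_P=(h_P+1)(p-1)$. Riemann--Hurwitz then reads
\[
2p-2 \;=\; p(2\bar g-2)+(p-1)\sum_{P}(h_P+1),
\]
which again excludes $\bar g\geq 2$. For $\bar g=1$ one gets $\sum_P(h_P+1)=2$, and since each summand is at least $2$ this forces a single fixed point, i.e.\ $\rho(\alpha)=1$, as asserted in (i). For $\bar g=0$ one obtains $\sum_P(h_P+1)=(4p-2)/(p-1)$, which is an integer only when $p-1\mid 2$; discarding $p=2$ as incompatible with $g\geq 3$, we are left with $p=g=3$ and $\sum_P(h_P+1)=5$.

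The obstacle I expect in the wild sub-case $\bar g=0$, $g=p=3$ is that Riemann--Hurwitz alone allows $\rho(\alpha)\in\{1,2\}$ (the partitions $(h_{P_1}+1,h_{P_2}+1)=(2,3)$ or the single value $h_{P}+1=5$ are both consistent), so pinning $\rho(\alpha)$ to the value asserted in the statement requires either a Hasse--Arf constraint on the break $h_P$, or an explicit analysis of the Artin--Schreier normal form $y^3-y=f(x)$ corresponding to the cover $\cX\to\cX/G\cong\mathbb{P}^1$ (with the pole orders of $f$ at the ramified points prescribing the $h_P+1$). All other parts of the statement are forced by the purely arithmetic analysis above.
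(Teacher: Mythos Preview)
Your approach is identical to the paper's: apply Riemann--Hurwitz to the degree-$g$ cyclic cover, separate the tame case from the wild case, and solve the resulting Diophantine constraints on $\bar g$ and the ramification data. The tame analysis and the wild case $\bar g=1$ match the paper's argument line by line.

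Your hesitation about the wild sub-case $\bar g=0$, $g=p=3$ is well placed and in fact sharper than the paper's own treatment. The paper simply \emph{asserts} the ramification pattern $(h_{P}+1,h_{Q}+1)=(2,3)$, hence $\rho(\alpha)=2$, without any argument excluding the single-point partition $h_P+1=5$. But that partition is perfectly consistent with Riemann--Hurwitz and with Artin--Schreier theory (the required break $h_P=4$ is coprime to $p=3$), and it is realised: the curve $y^3-y=x^4$ in characteristic $3$ has genus $3$, rational quotient under $y\mapsto y+1$, and exactly one fixed point. So the statement of item~(ii) is in fact incomplete---$\rho(\alpha)\in\{1,2\}$ are both possible when $g=p=3$---and neither Hasse--Arf (which imposes nothing new for a cyclic $C_p$-cover) nor a normal-form argument will eliminate $\rho(\alpha)=1$. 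Your analysis is correct; the gap lies in the paper, not in your proposal.
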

\begin{proof} Since $g$ is prime, regardless of which case holds ($ p = g$ or $p \neq g$), the Riemann-Hurwitz genus formula applied to the covering $\cX \rightarrow \cX/G$ reads 
\begin{equation}\label{eq:g=p}
2(g-1) = 2g(\bar{g}-1)+ s(g-1), 
\end{equation}
where $\bar{g } = g(\cX/G)$. From direct inspection, $\bar{g} \leq 1$. Assume that $\bar{g} =1$. Then Equation \eqref{eq:g=p} yields
$$
2(g-1) = + s(g-1).
$$
Hence, $s =2$. If $ g\neq p$, we have two distinct fixed points $P,Q$, whereas for $p =g$, we have a single point  fixed by $\alpha$ with $G_{P}^{(0)} = G_{P}^{(1)} = G$, while $G_{P}^{(2)}$ is trivial. 

Next, assume that $\bar{g} = 0$. In this case, Equation \eqref{eq:g=p} reads
$$
2(2g-1) = s(g-1). 
$$
The above equation admits integer solutions $s$ if and only if either $g =2$ or $g = 3$. As $g =2$ is excluded by our hypothesis, we are left with $q = g = 3$ and $s = 5$. If  $q \neq p$, we have then $5$ points fixed by $\alpha$, whereas if $q=p$ we have two points $P,Q$ fixed by $\alpha$, with $G_{P}^{(0)} = G_{P}^{(1)} = G$, while $G_{P}^{(2)}$ is trivial,  and $G_{Q}^{(0)} = G_{Q}^{(1)} = G_{Q}^{(2)} = G$, and $G_{Q}^{(3)}$ is trivial. 
\end{proof}

Thus, in the general case, to obtain explicit models for tame and wild $g$-curves, one should consider extensions of function fields of elliptic curves, with prescribed ramification. Assuming that a group of order $g$ in the automorphism group of a given $g$-curve is not self-normalizing, one may infer some more information on the underlying elliptic curve, and proceed to construct a model via Kummer or Artin-Schreier theory. We finally point out that without such a model, general results on the automorphism group of these curves (apart from something akin to the first results of Subsections 3.2 and 4.2) seem rather difficult to obtain, since these curves have odd genus. 

\subsection{On $(g-1)$-curves}

Finally, we turn our attention to $(g-1)$-curves. We prove the following. 

\begin{proposition}\label{first:g-1}

Let $\cX$ be a $(g-1)$-curve, and let  $\alpha \in \aut(\cX)$ of order $g-1$ with $G = \langle \alpha\rangle$. Then one of the following holds. 
\begin{enumerate}
\item[{\rm (i)}] $g(\cX/G ) = 2$, with $\rho(\alpha) =0$; 
\item[{\rm (ii)}] $g(\cX/G ) = 1$, with either $g = 3$ or $g = 4$;
\item[{\rm (iii)}] $g(\cX/G ) = 0$, with $g \in \{3,4,6\}$. 
\end{enumerate}
\end{proposition}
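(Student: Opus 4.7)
The plan is to apply the Riemann-Hurwitz formula directly to the cover $\cX \to \cX/G$, separating the tame case $p \neq g-1$ and the wild case $p = g-1$, and then extract the bound on $\bar g := g(\cX/G)$ together with the constraints on $g$ from elementary divisibility.

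In the tame case, each fixed point $P$ of $\alpha$ satisfies $G_P = G$ with all higher ramification groups trivial, so $d_P = |G| - 1 = g - 2$. Writing $s = \rho(\alpha)$, Riemann-Hurwitz reads
\begin{equation*}
2(g-1) = (g-1)(2\bar g - 2) + s(g-2),
\end{equation*}
equivalently
\begin{equation*}
2(g-1)(2 - \bar g) = s(g-2).
\end{equation*}
Since the right-hand side is non-negative we immediately obtain $\bar g \leq 2$, and since $\gcd(g-1, g-2) = 1$ we deduce $(g-2)\mid 2(2-\bar g)$.

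In the wild case $p = g-1$, at any fixed point $P$ we have $G_P^{(0)} = G_P^{(1)} = G$ by \cite[Theorem 11.74]{hirschfeld-korchmaros-torres2008}, and since $|G|$ is prime the higher ramification groups $G_P^{(i)}$ are either equal to $G$ or trivial. Let $m_P \geq 1$ be the largest index with $G_P^{(m_P)} = G$. Then $d_P = (m_P + 1)(g-2)$, and defining $T = \sum_P (m_P+1) \geq 2 \rho(\alpha)$ (sum over fixed points), Riemann-Hurwitz again gives
\begin{equation*}
2(g-1)(2 - \bar g) = T(g-2).
\end{equation*}
So the same analysis applies.

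Thus in both regimes $\bar g \leq 2$ and I split into three cases.
\textbf{Case $\bar g = 2$:} The right-hand side vanishes, so $s = 0$ (resp.\ $T = 0$), giving $\rho(\alpha) = 0$, which is (i).
\textbf{Case $\bar g = 1$:} We get $(g-2) \mid 2$, so $g \in \{3,4\}$; note that $\bar g = g = 1$ is impossible, so some ramification is forced, which is consistent. This yields (ii).
\textbf{Case $\bar g = 0$:} We get $(g-2) \mid 4$, so $g-2 \in \{1,2,4\}$ and $g \in \{3,4,6\}$ (with $s$ or $T$ positive, again forced since $g \geq 2$). This yields (iii).

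The argument is essentially a bookkeeping exercise, so there is no substantial obstacle; the only mildly subtle point is the uniform treatment of the wild case via the integer $T$, which requires noting that the possible ramification jumps at a wildly ramified fixed point are bounded only from below and that the contributed different exponent is a multiple of $g-2$. Once this is observed, the divisibility $(g-2) \mid 2(2-\bar g)$ holds in all characteristics and forces the listed values of $g$.
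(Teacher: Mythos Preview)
Your proof is correct and follows essentially the same route as the paper: apply Riemann--Hurwitz to the cyclic cover $\cX \to \cX/G$, observe that the total different is a multiple of $g-2$, deduce $\bar g \leq 2$, and read off the admissible values of $g$ from the divisibility $(g-2)\mid 2(2-\bar g)$. The paper simply writes a single parameter $s$ for the coefficient of $(g-2)$ in both the tame and wild cases, whereas you spell out the wild case explicitly via $T = \sum_P (m_P+1)$; this is a cosmetic difference only.
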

\begin{proof}
Let $G = \langle \alpha\rangle, \bar{g}= g(\cX/G)$.   As $g-1$ is prime, regardless of which case holds ($ p = g-1$ or $p \neq g-1$), the Riemann-Hurwitz genus formula applied to the covering $\cX \rightarrow \cX/G$ reads 
\begin{equation}\label{eq:g=p+1}
2(g-1) = 2(g-1)(\bar{g}-1)+ s(g-2). 
\end{equation}
From direct inspection, $\bar{g} \leq 2$. If $\bar{g} =2$, Equation \eqref{eq:g=p+1} reads 
$$
2(g-1) = 2(g-1) +s(g-2), 
$$
which is satisfied if, and only if, $s = 0$, and our first claim follows. 

If $\bar{g} =1$,  Equation \eqref{eq:g=p+1} reads 
$$
2(g-1) = s(g-2),
$$
whence either $g =3$ and $s =4$, or $g =4 $ and $s = 3$. 

If $\bar{g} =0$,  Equation \eqref{eq:g=p+1} reads 
$$
4(g-1) = s(g-2),
$$
whence either $g =3$ and $s =8$, or $g =4 $ and $s = 6$ or $g =6$ and $s =5$. 
\end{proof}

Let $\cX$ be $(g-1)$-curve such that $\cX/G$ is a curve of genus $2$. If ${\rm char}(K) =  g-1$, it is relatively \emph{easy} to provide explicit models of  $(g-1)$-curves via Artin-Schreier theory. More in detail, the techniques developed in \cite{storviana1989} to construct unramified Artin-Schreier extensions of function fields can be used once the quotient curve $\cX/G$ is given. 
However, it seems to us much more difficult to give examples over fields of characteristic different from $g-1$. 
The reason is that unramified Kummer extensions have not been so thoroughly studied in the literature. We provide the following example.

\begin{example}
Let $p\neq 5$. Then the plane curve with affine equation
$$
X^5Y^{10}=2Y^5+1
$$
is a 5-curve of genus $6$, whose function field was obtained through the following extension 
 $$
\begin{cases}
y^2 = x^5-1\\
z^5 = \frac{y+1}{x^5}\\
\end{cases}.
 $$
 of the function field of the genus two  curve with affine equation $Y^2 = X^5-1$. It can be checked through Magma that this curve has $150$ automorphisms, that is, it attains the maximum size of a tame automorphism group a genus $6$ curve. 
\end{example}

As $(g-1)$-curves have even genus, for $p \neq 2$ we may infer much information on their automorphism groups, again by using the results in \cite{giulietti-korchmaros-2017}. We point out that $(g-1)$-curves have recently been considered in the literature when $p=0$, see \cite{izquierdocarocca}. 

\begin{rem}
Let us recall that a Hurwitz curve is a curve attaining the Hurwitz upper bound $|G| = 84(g-1)$ for the size of a tame automorphism group $G$. Hurwitz curves do not exist for any genus; the smallest values are $g =3, 7,14,17$. For $g =3$, there exists only one such curves, namely the Klein quartic of affine equation $X^3Y +Y^3+X =0$. For  genus 7, again there is only one Hurwitz curve, the so-called Fricke-Machbeath curve, whose plane equation over $\mathbb{Q}$ is given  by $1+7XY+21X^2Y^2 +35X^3Y^3 +28X^4Y^4 +2X^7 +2Y^7 =0$. For higher genus, no explicit equation for a Hurwitz curve is known. 
For genus $14$, there are $3$ distinct Hurwitz curves, which happen to be $(g-1)$-curves. It seems possible, although non-trivial, to give explicit equations for such Hurwitz curves by building up on the results and observations made in this subsection. 
\end{rem}

\end{document}